\def\AAA{\mathbb{A}}
\def\CC{\mathcal{C}}
\def\NNN{\mathbb{N}}
\def\QQQ{\mathbb{Q}}
\def\UU{\mathcal{U}}
\def\QQQa{\mathbb{Q}\langle \Bar{\alpha},\alpha_0 \rangle}
\def\a{\alpha}
\def\b{\beta}
\def\d{\delta}
\def\D{\Delta}
\def\nmdeg{\operatorname{nmdeg}}
\DeclareMathOperator{\RU}{RU}
\DeclareMathOperator{\RM}{RM}
\DeclareMathOperator{\tp}{tp}
\DeclareMathOperator{\ord}{ord}
\DeclareMathOperator{\trdeg}{trdeg}
\def\Ind#1#2{#1\setbox0=\hbox{$#1x$}\kern\wd0\hbox to 0pt{\hss$#1\mid$\hss}
\lower.9\ht0\hbox to 0pt{\hss$#1\smile$\hss}\kern\wd0}
\def\ind{\mathop{\mathpalette\Ind{}}}
\def\notind#1#2{#1\setbox0=\hbox{$#1x$}\kern\wd0
\hbox to 0pt{\mathchardef\nn=12854\hss$#1\nn$\kern1.4\wd0\hss}
\hbox to 0pt{\hss$#1\mid$\hss}\lower.9\ht0 \hbox to 0pt{\hss$#1\smile$\hss}\kern\wd0}
\newtheorem{theorem}{Theorem}[section]
\newtheorem{thm}[theorem]{Theorem}
\newtheorem{lemma}[theorem]{Lemma}
\newtheorem{cor}[theorem]{Corollary}
\newtheorem{prop}[theorem]{Proposition}
\newtheorem{fact}[theorem]{Fact}
\newtheorem{claim}[theorem]{Claim}
\newtheorem{conj}[theorem]{Conjecture}
\theoremstyle{definition}
\newtheorem{definition}[theorem]{Definition}
\newtheorem{example}[theorem]{Example}
\newtheorem{ques}[theorem]{Question}
\theoremstyle{remark}
\newtheorem{remark}[theorem]{Remark}
\newcommand{\m}{\mathbb }
\newcommand{\mc}{\mathcal }
\begin{document}

\title{Generic differential equations are strongly minimal}
\author[M. DeVilbiss]{Matthew DeVilbiss}
\author[J. Freitag]{James Freitag}
\address{Matthew DeVilbiss, Ohio State University, Department of Mathematics, 231 W 18th Ave, Columbus, OH, USA, 43210-1174.}
\email{devilbiss.16@osu.edu}
\address{James Freitag, University of Illinois Chicago, Department of Mathematics, Statistics,
and Computer Science, 851 S. Morgan Street, Chicago, IL, USA, 60607-7045.}
\email{jfreitag@uic.edu}
\thanks{The authors were partially supported by NSF CAREER award 1945251 during the course of this work. The authors thank Dave Marker, Ronnie Nagloo, Anand Pillay, and especially Rahim Moosa for useful conversations around this work. The techniques developed in this paper build on the thesis of Jonathan Wolf cited below. The authors also wish to acknowledge the extremely detailed and useful remarks of the referee.}

\subjclass[2020]{03C45, 14L30, 12H05, 32J99}

\maketitle
\begin{abstract}
In this manuscript we develop a new technique for showing that a nonlinear algebraic differential equation is strongly minimal based on the recently developed notion of the degree of nonminimality of Freitag and Moosa. Our techniques are sufficient to show that generic order $h$ differential equations with nonconstant coefficients are strongly minimal, answering a question of Poizat (1980).     
\end{abstract}

\section{Introduction} 
Let $f(x)=0$ be an algebraic differential equation of in a single indeterminate $x$ with coefficients in a differential field $(K, \delta)$ of characteristic zero. In this manuscript, we are particularly interested in the case that $f(x)$ is nonlinear and of order $\geq 2$. The central property we study is the \emph{strong minimality} of the solution set of $f(x)=0$. The notion of strong minimality comes from model theory; in general, a \emph{definable set} $X$ is \emph{strongly minimal} if every definable subset is finite or cofinite, uniformly in parameters. In our setting, we are interested in the situation $X = \{ x \in \mc U \, | \, f(x) =0 \}$ is the set of solutions to an algebraic differential equation where $\mc U$ is a differentially closed field. Let $h$ be the order of $f$ -- that is, the highest derivative of $x$ appearing in $f$. In this context, $X$ is strongly minimal if and only if the multivariate polynomial $f$ is irreducible over $K^{alg}$ and given any $a \in \mc U $ with $f(a) = 0,$ and any differential field $ K_1 \leq \mc U$ with $K \leq K_1$, the transcendence degree of $K_1 \langle a \rangle $ over $K_1$ is either $0$ or $h$.

Strong minimality is an intensively studied property of definable sets, and has been at the center of many important number theoretic applications of model theory and differential algebra \cite{casale2020ax, freitag2017strong, HrushovskiMordell-Lang, nagloo2017algebraic}. Despite this, there are relatively few (classes of) equations which have been shown to satisfy the property - so few, that we are in fact able to give below what we believe to be a (at the moment) comprehensive list of those equations which have been shown to be strongly minimal. Showing the strong minimality of a given equation is itself sometimes a motivational goal, but often it is an important piece of a more elaborate application, since it allows one to use powerful tools from geometric stability theory. The existing strategies to prove strong minimality are widely disparate but apply only to very special cases. In roughly chronological order:  
\begin{enumerate}
\item Poizat established that the set of non-constant solutions of $x \cdot x'' =x'$ is strongly minimal (see \cite{MMP} for an explanation). Poizat's arguments were generalized by Brestovski \cite{brestovski1989algebraic} to a class of very specifically chosen order two differential equations with constant coefficients. 

\item Hrushovski's work \cite{HrushovskiMordell-Lang} around the Mordell-Lang conjecture proved the strong minimality of Manin kernels of nonisotrivial simple abelian varieties. It uses specific properties of abelian varieties as well as model-theoretic techniques around \emph{modularity} of strongly minimal sets. 
    
\item Nagloo and Pillay \cite{nagloo2017algebraic} show that results of the Japanese school of differential algebra \cite{murata1995classical, ohyama2006studies, okamoto1986studies, okamoto1987studies5, okamoto1987studies, umemura1997solutions, umemura1998solutions, watanabe1995solutions, watanabe1998birational} imply that Painlev\'e equations with generic coefficients are strongly minimal. The techniques employed are differential algebraic and valuation theoretic, relying on very specific properties of the equations. 

\item Work of Freitag and Scanlon \cite{freitag2017strong} shows that the differential equation satisfied by the $j$-function is strongly minimal. This result ultimately relies on point-counting and o-minimality via the Pila-Wilkie theorem as applied in \cite{PilaAO, PilaDer}; the argument there is very specific to the third order nonlinear differential equation satisfied by the $j$-function. Later, Aslanyan \cite{aslanyan2020ax} produced another proof, ultimately relying on similar (stronger) inputs of \cite{pila2016ax}. 
    
\item Casale, Freitag and Nagloo \cite{casale2020ax} show that equations satisfied by $\Gamma$-automorphic functions on the upper half-plane for $\Gamma$ a Fuchsian group of the first kind are strongly minimal. The arguments use differential Galois theory with some additional analytic geometry, and the techniques again are very specific to the third order equations of this specific form.

\item Jaoui shows that generic planar vector fields over the constants give rise to strongly minimal order two differential varieties \cite{jaoui2019generic}. The arguments rely on various sophisticated analytic techniques and results from foliation theory, some of which are particular to the specific class of equations considered.

\item Bl{\'a}zquez-Sanz, Casale, Freitag, and Nagloo \cite{blazquez2020some} prove the strong minimality of certain general Schwarzian differential equations.

\item Freitag, Jaoui, Marker, and Nagloo \cite{freitag2022equations} show that various equations of Li\'enard-type are strongly minimal using techniques from valuation theory. 
\end{enumerate}

We should also mention that strong minimality in this context was perhaps first studied by Painlev\'e using different language in \cite{painleve1leccons}. Painlev\'e conjectured the strong minimality of various classes of differential equations, where the notion is equivalent to \emph{Umemura's Condition (J).} See \cite{nagloo2017algebraic} for a discussion of these connections. We believe that the above list, together with a specific example 
of \cite{freitag2012model} constitutes the entire list of differential equations (of order at least two) which have been proven to be strongly minimal. Most of the techniques in the above listed results apply only to specific equations or narrow classes of equations and rely on specific properties of those classes in proving strong minimality. Our goal in this article will be to develop a rather more general approach which applies widely to equations with at least one differentially transcendental coefficient.

\subsection{Our approach and results} 
Let $f \in k\{ x \}$. Generally speaking, when attempting to prove strong minimality\footnote{Equivalently, there are no infinite differential subvarieties.} of some differential variety $$V = Z(f)=\{a \in \mc U \, | \, f(a) =0 \},$$ there are two phenomena which make the task difficult:
\begin{enumerate} 
\item There is no a priori upper bound on the degree of the differential polynomials which define a differential subvariety of $V$. 
\item The differential polynomials used to define a differential subvariety might (necessarily) have coefficients from a differential field extension of the field of $k$. 
\end{enumerate} 
There are structure theorems related to (1) but only in special cases. See for instance \cite{freitag2017finiteness} when the subvarieties are co-order one in $V$. Controlling the field extension in (2) is a key step in various recent works \cite{freitag2017strong, jaoui2019generic, nagloo2017algebraic}. This is most often accomplished by noting that \emph{stable embeddedness} of the generic type of $V$ implies that the generators of the field of definition of a forking extension can be assumed to themselves realize the generic type of $V$ -- see explanations in \cite{casale2020ax, freitag2017strong}. In recent work, Freitag and Moosa \cite{freitagmoosa} introduce a new invariant of a type, which more closely controls the structure over which the forking extension of a type is defined:

\begin{definition}
Suppose $p\in S(A)$ is a stationary type of $U$-rank $>1$.
By the {\em degree of nonminimality} of $p$, denoted by $\nmdeg(p)$, we mean the least positive integer $k$ such that for a Morley sequence of $p$ of length $k$, say $(a_1,\dots,a_k)$, $p$ has a nonalgebraic forking extension over $A,a_1,\dots,a_k$.
If $\RU(p)\leq 1$ then we set $\nmdeg(p)=0$. 
\end{definition}

In the theory of differentially closed fields of characteristic zero, Freitag and Moosa \cite{freitagmoosa} give an upper bound for the degree of nonminimality in terms of Lascar rank:

\begin{thm} \label{boundMorley}
Let $p \in S(k)$ have finite rank. Then $\nmdeg(p) \leq \RU(p)+1.$
\end{thm}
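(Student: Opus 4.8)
The plan is as follows. If $\RU(p)\le 1$ then $\nmdeg(p)=0$ by definition, so assume $n:=\RU(p)\ge 2$; we may also assume $k=\acl(k)$, so that $p$ is stationary, and we freely identify canonical bases with real tuples, which is legitimate since $\mathrm{DCF}_0$ eliminates imaginaries. Unwinding the definition of $\nmdeg$, it suffices to exhibit a Morley sequence $(a_1,\dots,a_m)$ of $p$ over $k$ with $m\le n+1$ together with a realization $a\models p$ such that $1\le \RU(a/k,a_1,\dots,a_m)\le n-1$, i.e. $a\nind_k a_1\cdots a_m$ and $a\notin\acl(k,a_1,\dots,a_m)$.

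The core of the argument is that a forking extension of $p$ is automatically witnessed by realizations of $p$, and can be transported onto a genuine Morley sequence of $p$ by a $U$-rank computation. Fix $a\models p$. Since $n\ge 2$, $p$ has a forking extension of $U$-rank $n-1$; let $e=\Cb(\tp(a/C))$ be the canonical base of such an extension, so $e\notin\acl(k)$ and $m_0:=\RU(a/k,e)$ satisfies $1\le m_0\le n-1$. Every realization of $\tp(a/k,e)$ realizes $p$. Take a Morley sequence $(a^1,a^2,\dots)$ of $\tp(a/k,e)$ over $k,e$; it is indiscernible over $k$, so $j\mapsto\RU(a^j/k,a^1,\dots,a^{j-1})$ is nonincreasing, equals $n$ at $j=1$, is bounded below by $m_0$, and — since in an $\omega$-stable theory $e$ is definable over a finite initial segment of any such Morley sequence — is eventually equal to $m_0<n$. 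Let $j_0$ be the least $j$ at which this rank is $<n$. For $j<j_0$ the rank is still $n=\RU(a^j/k)$, so $a^j\ind_k a^1\cdots a^{j-1}$; hence $(a^1,\dots,a^{j_0-1})$ is a Morley sequence of $p$ over $k$, while $\tp(a^{j_0}/k,a^1,\dots,a^{j_0-1})$ is a forking extension of $p$ of $U$-rank $\ge m_0\ge 1$, hence nonalgebraic. Therefore $\nmdeg(p)\le j_0-1$. Finally, $(a^1,\dots,a^{j_0-1})$ is both a Morley sequence of $p$ over $k$, hence of $U$-rank $(j_0-1)n$ over $k$, and a Morley sequence of $\tp(a/k,e)$ over $k,e$, hence of $U$-rank $(j_0-1)m_0$ over $k,e$; the Lascar inequalities give $(j_0-1)(n-m_0)\le\RU(e/k)$, and since $n-m_0\ge 1$ we conclude
\[
\nmdeg(p)\ \le\ j_0-1\ \le\ \RU(e/k).
\]

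Thus the theorem follows once one can choose the forking extension so that its field of definition $e$ has $U$-rank at most $\RU(p)+1$ over $k$. This is exactly difficulty (2) from the introduction, and I expect it to absorb essentially all the work as well as the only genuinely $\mathrm{DCF}_0$-specific input. When $\dcl(k,a)$ contains an element $b_0$ with $1\le\RU(b_0/k)\le n-1$ — equivalently, when $p$ has a proper nontrivial definable quotient, e.g. when a minimal-type analysis of $p$ has a nontrivial first step — one simply takes $e=\Cb(\tp(a/k,b_0))\in\acl(k,b_0)$, so $\RU(e/k)\le\RU(b_0/k)\le n-1$ and the bound holds with room to spare. The hard case is the "almost minimal" one, where $p$ admits no proper nontrivial quotient; then $p$ is internal to the minimal types of $\mathrm{DCF}_0$, and one must use the structure of such internal types — a fundamental system of at most $\RU(p)$ independent realizations of minimal types, the definable (algebraic) binding group action, and the Zilber dichotomy for minimal types in $\mathrm{DCF}_0$ (locally modular, or non-orthogonal to the constants) — to still locate a forking extension whose canonical base has $U$-rank at most $\RU(p)$, the single extra realization of $p$ producing the "$+1$". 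I expect that sharpening this last step is what ultimately drives the bound down to $\nmdeg(p)\le 2$.
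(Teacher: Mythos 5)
Your proposal cannot be compared against an internal argument, because the paper does not prove Theorem \ref{boundMorley} at all: it is imported from Freitag--Moosa \cite{freitagmoosa}. Judged on its own, your write-up has a correct reduction followed by a genuine gap exactly where the content of the theorem lies. The first half is fine: taking a forking extension $q$ of $p$ of $U$-rank $n-1$ with $e=\Cb(q)$, running along a Morley sequence of $q$ until independence over $k$ first fails, and using additivity of finite $U$-rank to get $\nmdeg(p)\le j_0-1\le \RU(e/k)$ is correct (small caveat: in a superstable theory $e$ is in general only \emph{algebraic}, not definable, over a finite initial segment of the Morley sequence, but algebraicity suffices for your argument). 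However, you have then only reduced the theorem to the assertion that some forking extension can be chosen with $\RU(\Cb(q)/k)\le \RU(p)+1$, and this you do not prove; you explicitly defer it, saying you ``expect'' it to follow from internality, binding groups, and the Zilber dichotomy. That deferred step is not a routine verification: nothing bounds the rank of canonical bases of co-rank-one forking extensions in terms of $\RU(p)$ (already for the generic type of $\mathcal{C}^2$, with $\mathcal{C}$ the constants, curves of high degree give co-rank-one forking extensions whose canonical bases have arbitrarily large rank over $k$), and it is not even clear that your reduction target is a consequence of the theorem itself, since $\nmdeg(p)\le n+1$ only yields a forking extension whose canonical base has rank at most $(n+1)n$ over $k$. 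So the proposal establishes $\nmdeg(p)\le\RU(\Cb(q)/k)$ and nothing more.

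For comparison, the argument in \cite{freitagmoosa} does not proceed by bounding the rank of a canonical base. Roughly, it first shows that if $\nmdeg(p)\ge 2$ then $p$ is isolated and almost internal to a non-locally modular minimal type (a fact this paper quotes in Section \ref{genstrongmin1}); by the Zilber dichotomy in differentially closed fields such a type is nonorthogonal to the constants, so $p$ is almost internal to the constants. The failure of the desired bound is then translated into generic multiple transitivity of the binding group action on the realizations of $p$, and the bound $\nmdeg(p)\le\RU(p)+1$ comes from the known algebraic-group instances of the Borovik--Cherlin conjecture on generically $t$-transitive actions (this is why $\mathrm{DCF}_0$ and $\mathrm{CCM}$ are singled out there). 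So while your closing paragraph names some of the right ingredients, in the actual proof they are used to control the length of the witnessing Morley sequence directly, not to bound $\RU(\Cb(q)/k)$; as written, your key step remains a conjecture rather than a proof.
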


Let $a \models p$, we will call the transcendence degree of the differential field $k\langle a \rangle /k $ the order of $p$. When $p$ is the generic type of a differential variety $V$, we also call this the order\footnote{These notions of order agree with the previously mentioned definition of order for differential polynomials.} of $V$. The order of $p$ is an upper bound for the Morley rank of $p$. The Morley rank of $p$ is a bound for the Lascar rank of $p$. For proofs of these facts, see \cite{MMP}. It follows that \emph{if} the type $p$ of a generic solution of an order $n$ differential equation over $k$ has a nonalgebraic forking extension over some differential field extension, then already $p$ has such a forking extension over $k\langle a_1, \ldots , a_{n+1} \rangle$ where the $a_i$ are from a Morley sequence in the type of $p$ over $k$. This consequence of Theorem \ref{boundMorley} will be essential to our approach to handling issue (2) above. 

Our approach to issue (1) follows a familiar general strategy of reducing certain problems for nonlinear differential equations to related problems for associated linear differential equations. For instance, \cite{PillayZiegler} applies a strategy of this nature to establish results around the Zilber trichotomy, while \cite{casale2020ax,nagloo2019algebraic} use this strategy to establish irreducibility of solutions to automorphic and Painlev\'e equations using certain associated Riccati equations. Our technique fits into this general framework and relies on Kolchin's differential tangent space, which will provide the linear equations associated with the original nonlinear differential variety $V$. Our approach to the associated linear equations has been under development in the thesis of Wolf \cite{wolf2019model} and the forthcoming thesis of DeVilbiss which gives an approach to calculating the Lascar rank of underdetermined systems of linear differential equations.  

When we call a differential polynomial $f(x)$ \emph{generic} of order $h$ and degree $d$, if $f$ is a linear combination of all monomials of degree no more than $d$ on the variables $x, x' ,\ldots, x^{(h)}$ with independent differentially transcendental coefficients.

Our main theorem is: 
\begin{thm}\label{thm:main}
Let $f(x)$ be a generic differential polynomial of order $h>1$ and degree $d$. Let $p$ be the type of a generic solution to $Z(f)$. If $d \geq 2 \cdot (\nmdeg(p)+1),$ then $Z(f)$ is strongly minimal.
\end{thm}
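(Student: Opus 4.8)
The plan is to combine the bound on the degree of nonminimality (Theorem \ref{boundMorley}) with an analysis of the associated linear equations coming from Kolchin's differential tangent space. Here is the strategy in order.

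First, suppose toward a contradiction that $Z(f)$ is not strongly minimal. Irreducibility of $f$ over $K^{alg}$ should be immediate from genericity of the coefficients, so the failure of strong minimality must come from a generic solution $a$ having a nonalgebraic forking extension over some differential field extension of $k$. By Theorem \ref{boundMorley}, since $\RU(p) \leq \ord(p) \leq h$, this forking already happens over $k\langle a_1, \dots, a_{m}\rangle$ where $m = \nmdeg(p)$ and $a_1, \dots, a_m$ is a Morley sequence in $p$ over $k$; equivalently, there is a further realization $b \models p$ with $b \nind_{k\langle \bar a\rangle} $ and $\trdeg(k\langle \bar a, b\rangle / k\langle \bar a\rangle)$ strictly between $0$ and $h$. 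So the task reduces to ruling out a proper infinite differential subvariety $W$ of $Z(f)$ defined over $k\langle a_1, \dots, a_m\rangle$, with $a_1, \dots, a_m$ independent generic points of $Z(f)$.

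Second — and this is the heart of the argument — I would pass to the differential tangent space. The key point of the Wolf/DeVilbiss approach is that at a generic point, the tangent space $T_a Z(f)$ is cut out by the linear differential equation $\sum_{i=0}^{h} \frac{\partial f}{\partial x^{(i)}}(a)\, u^{(i)} = 0$, whose coefficients $\frac{\partial f}{\partial x^{(i)}}(a)$ lie in $k\langle a\rangle$. If $W \subsetneq Z(f)$ is an infinite differential subvariety through the generic points $a_1, \dots, a_m$, then the tangent spaces $T_{a_j} W$ are proper linear subspaces of $T_{a_j} Z(f)$, and one can assemble these into an underdetermined system of linear differential equations whose solution space must be "small" (of order $< h$). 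The genericity hypothesis $d \geq 2(\nmdeg(p)+1) = 2(m+1)$ is precisely what is needed to force a contradiction here: the coefficients $\frac{\partial f}{\partial x^{(i)}}$ evaluated at $m$ independent generic points are sufficiently algebraically independent over $k$ — because $f$ has enough independent transcendental coefficients and high enough degree — that the only linear differential operator of order $\leq h$ annihilating all the relevant tangent vectors simultaneously is the zero operator, which would force $W$ to have full tangent space at each $a_j$, hence $W = Z(f)$, a contradiction. Carrying this out requires computing (a lower bound for) the Lascar rank of the resulting underdetermined linear system and showing it cannot drop below $h$ under the degree hypothesis.

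I expect the main obstacle to be exactly that last step: showing that the system of linear differential equations obtained by intersecting the tangent spaces at $m$ generic points has no low-order solution space. This demands (a) the rank computation for underdetermined linear differential systems from DeVilbiss's thesis, and (b) a genuinely quantitative use of genericity — one must show that the partial derivatives $\frac{\partial f}{\partial x^{(i)}}$, which are generic polynomials of degree $d-1$ in $x, \dots, x^{(h)}$, remain "independent enough" after specializing $x \mapsto a_j$ at the $m$ independent generics, and this is where the inequality $d \geq 2(m+1)$ gets used. A secondary technical point is verifying that stable embeddedness of the generic type of $Z(f)$ lets us take the parameters defining $W$ to be realizations of $p$ in the first place, so that Theorem \ref{boundMorley} applies; this is standard but should be stated carefully.
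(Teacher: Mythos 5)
Your top-level skeleton — reduce to forking over a Morley sequence via Theorem \ref{boundMorley}, linearize with Kolchin's differential tangent space, and use the degree hypothesis to make the linearized coefficients sufficiently independent — is the same as the paper's, and you correctly anticipate where $d\geq 2(\nmdeg(p)+1)$ enters (this is Lemma \ref{lem:higherordergeneric}: a Vandermonde-type matrix argument using the algebraic independence of the $a_j$). But the central contradiction as you describe it does not work. First, the configuration is off: in the forking situation the Morley-sequence points $a_1,\dots,a_m$ are \emph{parameters} of the putative subvariety $W$, and the only point lying on $W$ is the further realization $b$; there is no meaningful ``tangent space of $W$ at $a_j$.'' Second, comparing $T^\Delta_b W$ with $T^\Delta_b Z(f)$ yields no contradiction by itself: $T^\Delta_b Z(f)$ is the solution set of a single homogeneous order-$h$ linear equation, a finite-rank set which has proper definable subspaces of every lower order, so the existence of a ``small'' subspace is unremarkable, and your claimed chain (no nonzero annihilating operator, hence full tangent space, hence $W=Z(f)$) is unjustified. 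What makes the paper's argument close is the differentially transcendental right-hand side: one replaces $\alpha_0$ by a variable $y$, so the subvariety $U\subsetneq V^{[d]}$ produced by the forking relation has \emph{nonconstant} Kolchin polynomial; Lemma \ref{kolpoly} transfers this to $T^\Delta_{\bar a}U$, giving an infinite-rank proper subspace of $T^\Delta_{\bar a}\left(V^{[d]}\right)$; and the contradiction is that $T^\Delta_{\bar a}\left(V^{[d]}\right)$ is in definable bijection with $\AAA^1(\UU)$, which has no proper infinite-rank definable subsets (Proposition \ref{linnonlin}, Theorem \ref{bdlinnonlin}). None of this structure — the auxiliary variable $y$, the Kolchin-polynomial transfer, the bijection with $\AAA^1$ — appears in your sketch.

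Two further gaps. Strong minimality could also fail through an infinite proper subvariety defined over $\QQQ\langle\bar\alpha,\alpha_0\rangle$ itself, whose generic type is not an extension of $p$; this case is not covered by the nonminimality-degree reduction and is not ``immediate from irreducibility'' — the paper proves it separately (Proposition \ref{mindefn}), again using the differential transcendence of $\alpha_0$ and the $\AAA^1$ bijection. And the step you defer to ``the rank computation from DeVilbiss's thesis'' is in fact the technical heart of the paper: Theorem \ref{thm:defnbijection} shows that the resulting system of $m-1$ linear equations of order $h$ in $m$ variables with independent differentially transcendental coefficients admits a definable bijection with $\AAA^1(\UU)$, constructed by the explicit elimination procedures (Algorithms A and B) together with a careful verification, via Lemmas \ref{lem:ABnew}, \ref{lem:alga} and \ref{lem:algb}, that every denominator arising in the substitutions is nonzero. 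Without that argument (or a substitute for it), the proposal establishes the framework but not the theorem.
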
 
Since $\nmdeg(p)\leq \RU(p) +1 \leq \ord(f)+1$, the following corollary is immediate:
\begin{cor}\label{cor:main}
Let $f(x)$ be a generic differential polynomial of order $h>1$ and degree $d$. If $d \geq 2 \cdot (h+2),$ then $Z(f)$ is strongly minimal. 
\end{cor}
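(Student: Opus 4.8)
The plan is to reduce the strong minimality of $Z(f)$ to a statement about the differential tangent spaces of $V = Z(f)$ at generic points, exploiting Theorem \ref{boundMorley} to bound the number of independent generic realizations we must adjoin. Let $p$ be the generic type of $V$ over $k$, and suppose toward a contradiction that $p$ has a nonalgebraic forking extension over some differential field extension of $k$. By Theorem \ref{boundMorley} and the fact that $\RU(p) \le \ord(f) = h$, such a forking extension already occurs over $k\langle \bar a\rangle$ where $\bar a = (a_1,\dots,a_m)$ is a Morley sequence in $p$ of length $m = \nmdeg(p)$; equivalently, there is $b \models p$, realized generically over $k\langle\bar a\rangle$ but forking with $\bar a$ over $k$. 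The strategy is then: (i) since $b$ forks over $k$ with the Morley sequence, its order over $k\langle\bar a\rangle$ drops strictly below $h$, so $b$ satisfies a nontrivial differential-algebraic relation of order $<h$ over $k\langle\bar a\rangle$; (ii) linearize this situation via Kolchin's differential tangent space $T_\delta V$, whose fiber over a generic point is governed by a \emph{linear} differential equation obtained by formally differentiating $f$; (iii) genericity of the coefficients of $f$ forces the associated linear system to be sufficiently ``non-degenerate'' that no such order drop can occur once $m \le d/2 - 1$, which is exactly the hypothesis $d \ge 2(\nmdeg(p)+1)$.

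The heart of the argument is step (iii), and this is where the hypothesis on $d$ enters and where I expect the main obstacle to lie. Concretely, I would argue as follows. A forking extension witnessed by $b$ as above produces, after passing to the differential tangent space, a nonzero solution to the linearization $L_{\bar a, \dots}$ of $f$ along the relevant points that satisfies an \emph{extra} linear differential equation of order $<h$ with coefficients in $k\langle \bar a, b, \dots\rangle$. The linearization of $f$ at the tuple of points $x = a_1, \dots, a_m$ (and at $b$) has, as coefficients, the partial derivatives $\partial f / \partial x^{(i)}$ evaluated at these points. Because $f$ is generic of degree $d$, each such partial derivative is itself a generic-looking polynomial of degree $d-1$ in $x,\dots,x^{(h)}$ with coefficients that are differentially transcendental and independent over $\QQQ$. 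The key combinatorial/algebraic input is a \emph{transcendence estimate}: the evaluations of these $\binom{h+d}{d}$-many monomials at the points $a_1, \dots, a_m$ (each of which has differential transcendence degree $h$ over $k$, independently) remain ``as algebraically independent as possible'' over $k$. Quantitatively, as long as the number of points $m+1 \le d/2$ — so that $2(m+1)$ monomials' worth of data is available against $\le$ the number of degrees of freedom — one can show the only way the linear system degenerates is the trivial way, contradicting the existence of $b$.

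So the plan in order is: \textbf{(1)} Set up the model theory: assume $Z(f)$ is not strongly minimal, produce via Theorem \ref{boundMorley} a forking witness $b \models p$ over a Morley sequence $a_1,\dots,a_m$ with $m = \nmdeg(p)$, so that $\ord(b/k\langle\bar a\rangle) < h$. \textbf{(2)} Invoke the differential tangent space machinery (following Wolf's thesis and the forthcoming thesis of DeVilbiss) to translate this order drop into a nontrivial linear-differential relation satisfied by a nonzero vector in $T_\delta V$ over the points in question, with the coefficients of the linearization being (degree $d-1$) partial derivatives of $f$ evaluated at the $a_i$ and $b$. \textbf{(3)} Prove the genericity lemma: because the coefficients of $f$ are independent differential transcendentals, the partial derivatives $\partial f/\partial x^{(i)}$, evaluated at $m+1$ mutually generic points, generate a system of linear ODEs of the ``expected'' rank whenever $d \ge 2(m+1)$, so admit no extra low-order solution. \textbf{(4)} Conclude: the contradiction in step (3) forces strong minimality, and since $\nmdeg(p) \le \RU(p)+1 \le h+1$, taking $d \ge 2(h+2)$ suffices unconditionally, giving Corollary \ref{cor:main}.

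The main obstacle is step (3): controlling exactly how the differential-algebraic relations among the coefficients of $f$ (none, by genericity) propagate to relations among their evaluations at several independent generic points of $V$, and extracting from this the sharp bound $d \ge 2(m+1)$ rather than some weaker or non-explicit threshold. This is essentially a statement that a ``generic'' underdetermined linear differential system — of the shape produced by differentiating a generic polynomial — has the largest Lascar rank compatible with its shape, which is precisely the kind of computation the forthcoming DeVilbiss thesis is designed to supply; making the bookkeeping of monomials versus points yield the factor of $2$ cleanly is the delicate part.
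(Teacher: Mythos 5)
Your route is the paper's own: the corollary itself is deduced exactly as in your step (4), namely $\nmdeg(p)\le \RU(p)+1\le h+1$, so $d\ge 2(h+2)\ge 2(\nmdeg(p)+1)$, and your steps (1)--(3) retrace the proof of Theorem \ref{thm:main} (reduction via Theorem \ref{boundMorley} to a Morley sequence of bounded length plus one forking realization of lower order, linearization by Kolchin's differential tangent space, genericity of the linearized coefficients when $d\ge 2(m+1)$). Had you simply invoked Theorem \ref{thm:main}, step (4) alone would be a complete proof. As written, however, the items you flag as the ``heart'' and ``main obstacle'' are exactly the technical content of the paper, and you give no argument for them, so the proposal has a genuine gap.

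Two things are missing. First, the genericity claim in your step (3) (the paper's Lemma \ref{lem:higherordergeneric}): one must show the partials $\partial f/\partial x^{(i)}$ evaluated at the $m+1$ points are \emph{independent differential transcendentals}. The paper proves this by induction on $i$, at each level choosing, among the $\ge 2(m+1)$ coefficients of the monomials $x^{(i)}x^k$, a block of $m+1$ that remain differentially transcendental over all the other data (this is where the factor $2$ enters: adjoining the $m+1$ solutions can spoil at most that many coefficients), and then inverting a Vandermonde-type matrix, which requires knowing the points --- including the forking realization --- are \emph{algebraically} independent; your counting heuristic gestures at this but supplies neither the inductive bookkeeping nor the independence of the points. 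Second, and more seriously, even granted generic coefficients one must still prove that the resulting underdetermined system of $m$ linear order-$h$ equations in $m+1$ unknowns has no proper infinite-rank definable subspaces. This is the paper's Theorem \ref{thm:defnbijection}, established by an explicit chain of substitutions (Algorithms A and B) yielding a definable bijection with $\AAA^1(\UU)$, together with a nontrivial verification that none of the denominators arising along the way vanish; the paper's $j$-function example shows this step can genuinely fail when the coefficients are not independent transcendentals, so it cannot be waved through as ``expected rank'' or deferred to a forthcoming thesis.
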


This answers Question 7 of \cite{poizat1980c} for sufficiently large degree, any order, and nonconstant coefficients. As described above, Jaoui \cite{jaoui2019generic} has recently answered the order two case of Question 7 of \cite{poizat1980c} for constant coefficients. We conjecture a more general form of Theorem \ref{thm:main}: 

\begin{conj}
Generic differential equations of fixed order and degree greater than one are strongly minimal. 
\end{conj}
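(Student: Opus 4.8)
The plan is to close the gap between Theorem~\ref{thm:main} and the conjecture. For order $h = 1$ a generic (hence irreducible) equation has generic type of order at most $1$, hence Lascar rank at most $1$, so $Z(f)$ is strongly minimal for trivial reasons; the content is $h \ge 2$, where, since $\nmdeg(p) \le \RU(p)+1 \le h+1$, what remains open is exactly the bounded window of small degrees $2 \le d < 2(\nmdeg(p)+1) \le 2h+4$. Two more or less independent levers need to be pushed. The first is the bound on $\nmdeg(p)$: the factor $2$ and the additive $+1$ in the hypothesis of Theorem~\ref{thm:main} come, roughly, from the $\nmdeg(p)+1$ parameters of the Morley sequence over which a hypothetical nonalgebraic forking extension of $p$ is witnessed, each of which consumes part of the ``genericity budget'' of the coefficients of $f$. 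So the first step is to show that, for a \emph{generic} $f$ whose generic type $p$ has $\RU(p) \ge 2$, one in fact has $\nmdeg(p) = 1$ --- a single generic solution already suffices. The natural route is to prove that the canonical base of any nonalgebraic forking extension of $p$ lies in $\dcl(k, a_1)$ for $a_1 \models p$: since the coefficients of $f$ are differentially transcendental and independent, a second Morley-sequence parameter $a_2$ should be unable to contribute anything new to a field of definition assembled from the tangent- and prolongation-data at $a_1$. Granting $\nmdeg(p) \le 1$, the method of Theorem~\ref{thm:main} already yields strong minimality for $d \ge 4$.

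The second lever is the linearization argument. The present proof passes to Kolchin's differential tangent space of $Z(f)$, extracting an associated linear differential equation, and then seems to use a splitting of the monomials of $f$ --- one part to govern the order of the tangent equation, another to control its restriction to the Morley-sequence parameters --- which is where the remaining factor of $2$ arises. I would try to eliminate the splitting in favour of a single, more economical family of auxiliary monomials (for instance monomials $\big(x^{(h)}\big)^{j} x^{d-j}$ for a suitable range of $j$), or to work with the whole prolongation sequence of $Z(f)$ simultaneously rather than one tangent space, so that one needs only $d \gtrsim \nmdeg(p)+2$ in place of $2(\nmdeg(p)+1)$. Together with $\nmdeg(p) \le 1$, this would cut the remaining open cases down to $d \in \{2,3\}$.

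What is left is the genuinely hard core: degrees $d = 2$ and $d = 3$, where the genericity budget is essentially exhausted and no room remains for the parameter-absorption above. Here one must abandon the ``subtract off generic realizations'' approach and argue directly that a proper infinite differential subvariety $W \subsetneq Z(f)$ through a generic point is incompatible with the differential transcendence and independence of the coefficients: the equations defining $W$, combined with $f(a) = 0$ and the prolongation relations, should be manipulated into a nontrivial algebraic dependence among the coefficients of $f$, a contradiction. Concretely, for $d = 2$ the associated tangent equation is itself linear of order $h$ with coefficients polynomial of degree $\le 1$ in the generic solution and its derivatives, and one wants to show it admits no proper definable family of solution spaces; adapting the valuation-theoretic methods of \cite{nagloo2017algebraic, freitag2022equations} to this ``linear in the perturbation'' setting is a plausible tool, as is Jaoui's foliation-theoretic analysis \cite{jaoui2019generic} in the order-two case, suitably transported from constant to differentially transcendental coefficients. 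I expect this base-case analysis --- equivalently, pushing the degree threshold of Theorem~\ref{thm:main} all the way down to $2$ --- to be the main obstacle, since the present technique is, by construction, quantitatively tied to the number of Morley-sequence parameters it must introduce.
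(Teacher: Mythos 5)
You have not proved this statement, and neither does the paper: it is stated there as a conjecture precisely because the methods of Theorem \ref{thm:main} stop at $d\geq 2(\nmdeg(p)+1)$. What you offer is a research programme in which every load-bearing step is left unproven. The first lever, that $\nmdeg(p)=1$ for a generic $f$ with $\RU(p)\geq 2$, is asserted only via the hope that the canonical base of any nonalgebraic forking extension lies in $\dcl(k,a_1)$; but this is exactly the kind of control over fields of definition whose unavailability motivated the degree-of-nonminimality machinery in the first place, and no argument is given. Moreover, by the result of \cite{freitagmoosa} quoted in Section \ref{genstrongmin1}, $\nmdeg(p)>1$ forces $p$ to be almost internal to the constants, so your claim is equivalent to a nontrivial orthogonality-to-the-constants statement for generic equations; the paper itself only obtains the disjunction ``strongly minimal or almost internal to the constants'' for specific examples, and closing that disjunction is flagged there (Question \ref{q1}) as an open problem requiring further ideas.

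The second lever is likewise unsubstantiated, and your diagnosis of where the factor $2$ comes from points you to a fix that would not work as described. In Lemma \ref{lem:higherordergeneric} the hypothesis $d\geq 2m$ is what guarantees that, at \emph{each} order level $i$, one can select $m$ coefficients $\alpha_{i,j}$ that remain independent differential transcendentals over all the remaining coefficients together with $a_1,\dots,a_m$: the $m$ solutions are differentially algebraic over the coefficient field and so can absorb up to $m$ of the transcendentals per level, and this surplus is what makes the matrix argument yield that the tangent-space coefficients $\beta_{i,j}$ are themselves independent differential transcendentals, the hypothesis of Theorem \ref{thm:defnbijection}. Choosing a ``more economical family of auxiliary monomials'' or working with prolongations does not change this counting, and you supply no alternative mechanism for recovering the genericity of the $\beta_{i,j}$ (the $j$-function example at the end of Section \ref{genstrongmin} shows the substitution algorithms genuinely break when that genericity fails). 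Finally, you concede that $d=2,3$ — the heart of the conjectural range, including the degree-$2$ case highlighted by Poizat's question and by \cite{jaoui2019generic} — would need entirely different methods. So the proposal correctly locates the gap between Theorem \ref{thm:main} and the conjecture, but it closes no part of it; the statement remains open.
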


To the non-model theorist, it does not seem obvious why strong minimality plays a central role in the theory of algebraic differential equations, but there seem to be two important factors behind this: \begin{itemize}
\item Once strong minimality of an equation is established, powerful results having their origins in geometric stability theory can be employed (e.g. the Zilber trichotomy, discussed next). 
\item Among nonlinear equations, the results of this paper and those of \cite{jaoui2019generic} show that strong minimality holds ubiquitously.  
\end{itemize} 

Even when an equation is not strongly minimal, it is often true that questions about the solutions can be reduced to questions about solutions of certain associated minimal equations coming from a notion called \emph{semi-minimal analysis}; for instance, see \cite{freitag2022any}. We now state the Zilber trichotomy for strongly minimal sets adapted to the setting of differentially closed fields:

\begin{fact}[\cite{HrSo},\cite{PillayZiegler}]\label{trichotomy} Let $X$ be a strongly minimal set. Then exactly one of the following holds:
\begin{enumerate}
\item (non-locally modular) $X$ is nonorthogonal to $\mathbb{C}$,
\item (locally modular, nontrivial) $X$ is nonorthogonal to the (unique) smallest Zariski-dense definable subgroup of a simple abelian variety $A$ which does not descend to $\mathbb{C}$, 
\item (trivial) $X$ is geometrically trivial.
\end{enumerate}
\end{fact}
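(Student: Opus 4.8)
The conclusion that \emph{exactly one} of the three alternatives holds is the soft part. It is immediate once one knows that $\mathbb{C}$, as a pure algebraically closed field, is not locally modular; that the Manin kernel $A^{\sharp}$ of a simple abelian variety not descending to $\mathbb{C}$ --- which is precisely its smallest Zariski-dense definable subgroup --- is locally modular, nontrivial, and orthogonal to $\mathbb{C}$; and that a geometrically trivial strongly minimal set is orthogonal to $\mathbb{C}$ and to every such $A^{\sharp}$. The real content is therefore the two existence assertions, which I would establish separately.

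\textbf{Part I: not locally modular implies nonorthogonal to $\mathbb{C}$.} The plan is to manufacture an interpretable algebraically closed field. Failure of local modularity says that the generic type of $X$ is not one-based, which by Zilber's analysis produces a definable family of dimension at least $2$ of plane curves in $X^{2}$. From there I see two routes. The first goes through Zariski geometries: endow the powers $X^{n}$ with the Kolchin topology (closed sets being the differential-algebraic subsets), check that this yields a Zariski geometry --- the descending chain condition and the dimension theory come from $\omega$-stability and differential-algebraic dimension, the presmoothness/dimension-theorem axiom is where the differential structure is genuinely used, and non-local-modularity supplies non-linearity (ampleness) --- and then apply the Hrushovski--Zilber theorem to interpret an algebraically closed field $K$ over which $X$ is generically a curve. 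The second route, following Pillay--Ziegler, avoids the Zariski-geometry technicalities by working with jet (prolongation) spaces: the failure of one-basedness yields germs of definable functions whose composition is made definable by the jet-space structure, and these are fed into Hrushovski's field configuration theorem directly. Either way one obtains an interpretable, strongly minimal field $K$; since (by elimination of imaginaries together with the analysis of definable fields) every strongly minimal field interpretable in a model of $\mathrm{DCF}_{0}$ is definably isomorphic to the constants, it follows that $X$ is nonorthogonal to $\mathbb{C}$. Producing the field is the crux of the whole result, and it is precisely here that a heavy external input --- the Hrushovski--Zilber theorem, or the Pillay--Ziegler jet-space machinery --- is unavoidable.

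\textbf{Part II: locally modular and nontrivial implies nonorthogonal to a Manin kernel.} Here I would first apply Hrushovski's group configuration theorem: nontriviality together with local modularity produces a locally modular (hence one-based) strongly minimal group $G$, definable in the ambient model, to which $X$ is nonorthogonal. It then remains to classify such $G$. Using that every finite-dimensional differential algebraic group embeds into an algebraic group, and analysing a strongly minimal one --- the constants case being excluded since $\mathbb{C}$ is not locally modular --- the Buium--Manin theory of the Manin homomorphism identifies $G$, up to commensurability, with the Manin kernel $A^{\sharp}$ of a simple abelian variety $A$; one-basedness then forces $A$ to be non-isotrivial, that is, not to descend to $\mathbb{C}$, and $A^{\sharp}$ is the unique smallest Zariski-dense definable subgroup of $A$. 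Hence $X$ is nonorthogonal to this Manin kernel, and the trivial case needs no argument. The obstacle in Part II is milder than in Part I but still substantial: it rests on the group configuration theorem and on the differential-algebraic classification of one-based strongly minimal groups via simple abelian varieties.
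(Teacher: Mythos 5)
The paper does not prove this statement: it is quoted as a known Fact with citations to Hrushovski--Sokolovi\'c and Pillay--Ziegler, and your outline is precisely the argument of those cited sources (Zariski geometries or jet spaces to interpret the constants in the non-locally-modular case, the group configuration theorem plus the Manin-kernel classification of one-based groups in the nontrivial locally modular case). So your strategy is correct and coincides with the route the paper implicitly relies on via its references.
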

Nonorthogonality is a natural equivalence relation on strongly minimal sets. For strongly minimal sets $X$ and $Y$, $X$ is nonorthogonal to $Y$ if and only if there is a generic finite-to-finite definable correspondence between $X$ and $Y$. A strongly minimal set is geometrically trivial if whenever $a_1, \ldots , a_n \in X$ are are dependent in the sense of forking, there is a pair of distinct $a_i, a_j$ which is dependent. In terms of algebraic relations in our setting, anytime $a_1, \ldots , a_n \in X$ are generic solutions of the differential equation $X$, if $$\trdeg _K (K \langle a_1, \ldots a_n \rangle  ) < n \ord (X),$$ then there is a distinct pair $a_i,a_j$ such that 
$$\trdeg _K (K \langle a_i,a_j \rangle  ) < 2 \ord (X).$$
By strong minimality, the previous inequality implies that
$$\trdeg _K (K \langle a_i,a_j \rangle  ) = 0\text{ or } \ord (X).$$
While Fact \ref{trichotomy} gives a rather complete classification of strongly minimal sets in the nontrivial cases of the Zilber trichotomy, there is no general classification of the geometrically trivial strongly minimal sets in differentially closed fields. 

We conjecture that generic differential equations are \emph{geometrically trivial} in a strong form: 

\begin{conj}
Any two solutions of a generic differential equation of fixed order and degree greater than one are (differentially) algebraically independent. 
\end{conj}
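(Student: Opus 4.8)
Throughout, write $K$ for the (differential) field generated by the coefficients of $f$, and let $p\in S(K)$ be the generic type of $Z(f)$, of order $h$. The plan is to combine two ingredients: the strong minimality of $Z(f)$ (which is Theorem~\ref{thm:main}, or Corollary~\ref{cor:main}, for $d$ large — and the content of the preceding conjecture in general), and a \emph{disintegratedness} statement for $p$. Indeed, two distinct solutions $a,b$ of $Z(f)$ are $\delta$-algebraically dependent over $K$ exactly when $\trdeg_K K\langle a,b\rangle<2h$; by strong minimality this happens only if $b\in\acl(K\langle a\rangle)$ (differential algebraic closure) with $b$ again a generic solution — non-generic solutions lie in $K^{\mathrm{alg}}$, and a generic $f$ has none, disposing of that case. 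Thus it suffices to show (i) $p$ is geometrically trivial, and (ii) for $a\models p$ the only generic solution of $Z(f)$ lying in $\acl(K\langle a\rangle)$ is $a$ itself.

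For (i) I would run the Zilber trichotomy, Fact~\ref{trichotomy}, and rule out the two non-trivial cases using genericity of the coefficients. If $p\not\perp\mathbb{C}$, then after a finite base change a generic solution $a$ is interalgebraic with a tuple from $\mathbb{C}$; unwinding this produces a nonconstant $\delta$-rational first integral, equivalently a differential-birational equivalence of $Z(f)$ with an equation defined over $\mathbb{C}$. Such an equivalence imposes a nontrivial \emph{algebraic} relation on the coefficients of $f$, contradicting their (differential, hence algebraic) independence. The case $p\not\perp A^{\sharp}$ for the Kolchin closure $A^{\sharp}$ of the torsion of a simple abelian variety $A$ not descending to $\mathbb{C}$ is handled the same way: a finite-to-finite correspondence of $Z(f)$ with a curve inside $A^{\sharp}$ would again be encoded by a proper $\delta$-constructible condition on the coefficient tuple, which a generic tuple avoids. (This is the non-constant-coefficient analogue of the foliation-theoretic trichotomy analysis carried out by Jaoui \cite{jaoui2019generic} for generic planar vector fields over the constants.) Hence $p$ is geometrically trivial.

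For (ii), suppose $b$ is a generic solution with $b\in\acl(K\langle a\rangle)$. Then $a$ and $b$ are interalgebraic over $K$ and $\trdeg_K K\langle a,b\rangle=h$, so $a,a',\dots,a^{(h-1)}$ is a transcendence basis of $K\langle a,b\rangle/K$; consequently there is a nonzero $P\in K[y,x_0,\dots,x_{h-1}]$ with $P(b,a,a',\dots,a^{(h-1)})=0$, i.e.\ a $\delta$-polynomial self-correspondence of $Z(f)$ of order $0$ in $b$ and order $<h$ in $a$. One then wants to argue that the existence of such a $P$ with $b\ne a$ is a proper $\delta$-constructible condition on the coefficients of $f$, so that a generic (differentially transcendental, independent) coefficient tuple cannot satisfy it; this gives (ii) and completes the proof.

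The main obstacle is exactly the one flagged as issue~(1) in the introduction: there is no a priori bound on the degree of the witnessing polynomial $P$ in step (ii) (nor on the complexity of the correspondences arising in step (i)), so ``admitting a nontrivial self-correspondence'' is, without further work, only a \emph{countable union} of closed conditions — which does not by itself contradict the genericity of differentially transcendental coefficients. I expect this to be resolved by the same differential-tangent-space reduction used to prove Theorem~\ref{thm:main}, transporting the question to the associated linear systems where rank and degree bounds are available. A genuinely new point, however, is that the degree-of-nonminimality machinery does not apply verbatim: since $p$ is minimal one has $\nmdeg(p)=0$ by definition, so Theorem~\ref{boundMorley} gives nothing, and one will need a finer invariant controlling the complexity of algebraic self-correspondences of a \emph{minimal} type over its own Morley sequences — a ``degree of nontriviality'' in the spirit of Freitag–Moosa's $\nmdeg$ \cite{freitagmoosa}. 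Establishing a usable bound of this kind is, I believe, the crux of the conjecture.
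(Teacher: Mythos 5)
There is nothing in the paper to compare your argument against: this statement is left as an open conjecture (it is the strong-triviality companion to the conjecture that generic equations of any degree $>1$ are strongly minimal), and the paper offers no proof of it. Your proposal is a reasonable strategy outline, but it is not a proof, and you in fact concede the decisive point yourself. Concretely, every step that does real work rests on the assertion that the existence of some correspondence --- with $\mathbb{C}$ in case (1) of Fact~\ref{trichotomy}, with a Manin kernel in case (2), or an algebraic self-correspondence of $Z(f)$ in your step (ii) --- is a ``proper $\delta$-constructible condition'' on the coefficient tuple. Two things are missing there. First, without a uniform bound on the order and degree of the witnessing data, ``admits such a correspondence'' is only a countable union of definable conditions, each involving existential quantification over the parameters of the correspondence; genericity of the coefficients only helps once each such condition is known to be \emph{proper}, i.e.\ to fail for some (equivalently, for generic) coefficients. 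Second, establishing that properness is essentially the content of the conjecture itself: showing that a nontrivial relation between two solutions forces a special coefficient tuple is exactly what has to be proved, so as written the argument is circular at its core rather than a reduction to known results. Note also that your appeal to strong minimality is itself conjectural in the generality of the statement (degree $>1$), since Theorem~\ref{thm:main} requires $d\geq 2(\nmdeg(p)+1)$.

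You correctly identify why the paper's machinery does not close this gap: once $p$ is minimal, $\nmdeg(p)=0$ by definition, so Theorem~\ref{boundMorley} gives no bound on the length of a Morley sequence over which a hypothetical dependence (now a violation of triviality or of pairwise independence, rather than a forking extension) must already appear, and hence no analogue of Theorem~\ref{bdlinnonlin} through which the differential-tangent-space linearization of Section~\ref{Lintrans} could be run. What would be needed is a new invariant bounding the complexity of algebraic relations among realizations of a \emph{minimal} type --- the ``degree of nontriviality'' you gesture at --- together with an analysis showing that the linearized systems detect such relations; neither exists in this paper. So the honest assessment is that your write-up is a plausible research plan whose crux (the bounding step and the non-circular properness argument) is precisely what remains open.
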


In this paper, our techniques are applied to equations with differentially transcendental coefficients, but this is not an inherent restriction of the methods. For instance, in forthcoming work using these techniques joint with Casale and Nagloo, we give a fundamentally new proof of the main theorem of \cite{freitag2017strong}, proving that the equation satisfied by the $j$-function is strongly minimal. See Section \ref{genstrongmin1} for additional discussion. 

\subsection{Organization}
In section \ref{Notation}, we set up the notation and background results we require. Section \ref{Lintrans} gives a new sufficient condition for the strong minimality of a differential variety. Section \ref{genstrongmin} applies this condition to show that generic differential equations are strongly minimal. Section \ref{genstrongmin1} shows how one can establish a weaker condition than strong minimality in a more computationally straightforward manner and gives some open problems.

\section{Notation} \label{Notation} 
Let $\mc U$ be a countably saturated differentially closed field of characteristic zero and let $\CC$ be the field of constants of $\UU$. All of the fields we consider will be subfields of $\mc U$. An affine \emph{differential variety} is the zero set of a (finite) system of irreducible differential polynomial equations over (a finitely generated subfield of) $\mc U$. 

Let $(y_1, \ldots, y_n)$ be a finite set of differential indeterminants over $\mc U$ and let $\Theta$ denote the set of derivative operators on $\mc U$. Since we are interested in differential fields with a single derivation, $\Theta=\{\d^{k}:k\geq 0\}$. A \emph{ranking}  on $(y_1, \ldots y_n)$ is a total ordering on the derivatives $\{\theta y_j: \theta \in \Theta, 1\leq j \leq n\}$ such that for all such derivatives $u$, $v$, and all $\theta\in \Theta$, we have
$$
u\leq \theta u, \quad u\leq v \Rightarrow \theta u \leq \theta v.
$$
A ranking is \emph{orderly} if whenever the order of $\theta_1$ is lower than the order of $\theta_2$, we have $\theta_1 y_i < \theta_2 y_j$ for any $i, j$.
An \emph{elimination ranking} is a ranking in which $y_i < y_j$ implies $\theta_1 y_i < \theta_2 y_j$ for any $\theta_1,\theta_2\in \Theta$.
For a $\d$-polynomial $f(y_1, \ldots, y_n)$, the highest ranking $\theta y_j$ appearing in $f$ is the \emph{leader} of $f$, denoted $u_f$. If $u_f$ has degree $d$ in $f$, we can rewrite $f$ as a polynomial in $u_f$, $f= \sum_{i=0}^d I_i u_f^i$, where the \emph{initial} of $f$, $I_d$, is not zero.
The \emph{separant} of $f$ is the formal derivative $\frac{\partial f}{\partial u_f}$.
Note that the leader, initial, and separant of a differential polynomial are defined only after choosing a specific ranking. A detailed treatment of these definitions can be found in \cite[pg 75]{KolchinDAAG}.

Let $\bar a \in \mc U$, and $F$ a differential subfield of $\mc U$. There is a numerical polynomial $\omega (\bar a /F)$ called the \emph{Kolchin polynomial} of $\bar a$ over $F$ such that for sufficiently large $s\in \NNN$,
$$\omega_{\bar{a}/F}(s)=\trdeg\left(F\left(\bar{a}, \d(\bar{a}), \ldots, \d^s(\bar{a})\right) / F\right)$$
(see \cite[Theorem 6, pg 115]{KolchinDAAG}). When $X$ is a differential variety, that is, a closed irreducible set in the Kolchin topology, $\omega (X/F):=\omega (\bar a /F)$ where $\bar a $ is a generic point on $X$ over $F$.
Since we are only concerned with differential fields with a single derivation, the degree of $\omega(X/F)$, sometimes called the \emph{differential type} is either 0 or 1. The leading coefficient of $\omega(X/F)$ is called the \emph{typical differential dimension}. Crucially for our purposes, the Kolchin polynomial witnesses forking in the sense of model theory , i.e., 
$$ \bar{a} \ind_A C \Longleftrightarrow \omega_{\bar{a}/A}(s)=\omega_{\bar{a}/A\cup C}(s).$$
Proof of this fact can be found in Theorem 4.3.10 of \cite{mcgrail2000model} and in Proposition 2.8 of \cite{pong2003rank}.

Let $X\subset \mathcal{U}^n$ be a differential variety and let $I(X)$ be the ideal of differential polynomials vanishing on $X$. The \emph{differential tangent bundle} $T^\Delta X$ is the differential variety given by
$$ \left\{ (x_1,\ldots, x_n, w_1, \ldots, w_n)\in \UU^{2n}: \forall f\in I(X), f(\bar{x})=0, \sum_{\substack{i\leq n\\ j\leq \ord(f)}} \frac{\partial f}{\partial x_i^{(j)}}(\bar{x})w_i^{(j)}=0\right\}$$
where $\frac{\partial f}{\partial x_i^{(j)}}(\bar{x})$ is a formal partial derivative where $x_i^{(j)}$ are considered as algebraic variables.
Given $\bar{a}\in X$, the \emph{differential tangent space} of $X$ over $\bar{a}$, denoted $T_{\bar{a}}^\Delta X$ is the fiber of $T^\Delta X$ over $\bar{a}$.
We now state a few basic properties of $T^\D X$ which will be used later:
\begin{itemize}
    \item The fibers $T_{\bar a}^\D X$ are (possibly infinite dimensional) $\CC$-vector spaces where $\CC$ is the field of constant elements in $\UU$.
    \item If $Y$ is a closed differential subvariety of $X$ and $\bar a\in Y$, then $T_{\bar a}^\D Y$ is a closed $\CC$-vector subspace of $T_{\bar a}^\D X$.
\end{itemize}
The following lemma appears as a corollary of \cite[Theorem 1, pg 199]{KolchinDAG}. The sentence involving a single differential variable is an improvement on Kolchin's corollary and follows immediately from Kolchin's proof.
\begin{lemma}\label{kolpoly}
Let $F$ be a differential field, $X$ a differential variety defined over $F$. Then there is a Kolchin-open set $U\subseteq X$ such that for every $\bar{a}\in U$, the Kolchin polynomial $\omega \left(X/F \right) = \omega \left( T^\D_{\bar{a}} (X) / F\langle \bar{a} \rangle \right)$. Moreover, if $X$ is defined by the vanishing of a single differential polynomial $f\in F\{x_1, \ldots, x_n\}$, then $U$ can be taken to be the open set defined by $I_fs_f\neq 0$ where $I_f$ is the initial of $f$ and $s_f$ is the separant of $f$ with respect to some orderly ranking.
\end{lemma}

\section{A general sufficient criterion for strong minimality} \label{Lintrans}
Let $f(x)$ be an order $n\geq 1$ non-linear differential polynomial in one variable without a constant term.
Let $\Bar{\a}$ denote the coefficients of $f$ and let $\a_0$ be differentially transcendental over $\Bar{\a}$.
Let $V_0$ be the differential variety corresponding to $f(x)=\a_0$.
Our goal in this section is to find sufficient conditions under which such a variety $V_0$ is strongly minimal.

Our next proposition shows that when $\a_0$ is differentially transcendental over $\QQQ\langle \bar{\a}\rangle$, there are no proper subvarieties of $V_0$ which are defined over the field $\m Q \langle \bar \a , \a_0 \rangle .$ Though the argument is simple, an elaboration of the technique in the proof will be used in the more difficult general case where one extends the field of coefficients. 

\begin{prop}\label{mindefn}
Let $f(x)$ be a non-linear order $h\geq 1$ differential polynomial with coefficients $\bar{\a}$, let $\a_0$ be differentially transcendental over $\QQQ\langle\bar{\a}\rangle$, and let $V_0$ be the differential variety defined by $f(x)=\a_0$. Then $V_0$ has no infinite proper subvarieties that are defined over $\QQQ\langle\Bar{\a},\a_0\rangle$.
\end{prop}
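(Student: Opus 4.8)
The plan is to argue by contradiction, exploiting the single most important feature of $V_0$: every point $b\in V_0$ satisfies $f(b)=\a_0$, so \emph{no} point of $V_0$ is differentially algebraic over $k_0:=\QQQ\langle\bar\a\rangle$ (otherwise $\a_0$ would be too). Thus, assuming $W\subseteq V_0$ is an infinite subvariety defined over $k:=\QQQ\langle\bar\a,\a_0\rangle$ with $W\ne V_0$, it suffices to produce a point of $W$ that \emph{is} differentially algebraic over $k_0$.

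First I would fix any point $b\in W$ (possible since $W$ is infinite, hence nonempty) and any differential polynomial $g\in I(W)\setminus I(V_0)$ (possible since $W\subsetneq V_0$ forces $I(V_0)\subsetneq I(W)$); so $g\in k\{x\}$, $g(b)=0$, and $g$ does not vanish identically on $V_0$. After multiplying $g$ by a suitable nonzero element of $k_0\{\a_0\}$ to clear denominators in its coefficients, I may assume $g=G(x,\a_0)$ for a differential polynomial $G(x,u)\in k_0\{x,u\}$ in two differential indeterminates — here the hypothesis that $\a_0$ is differentially transcendental over $k_0$ is exactly what makes $k_0\{\a_0\}$ a free differential polynomial ring, so that this rewriting and the substitutions below are legitimate. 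Multiplying by a nonzero scalar changes neither ``$g(b)=0$'' nor ``$g\notin I(V_0)$''.

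The key step is then to substitute $u=f(x)$ and set $\hat G(x):=G(x,f(x))\in k_0\{x\}$. I would first check $\hat G\ne 0$: if $\hat G=0$, then expanding each $u^{(j)}$ as $f(x)^{(j)}+\bigl(u^{(j)}-f(x)^{(j)}\bigr)$ shows $G(x,u)$ lies in the differential ideal $[\,u-f(x)\,]$, and applying the differential $k_0$-algebra homomorphism that fixes $x$ and sends $u\mapsto\a_0$ then puts $g=G(x,\a_0)$ into $[\,\a_0-f(x)\,]\subseteq I(V_0)$, contradicting the choice of $g$. Next, since $b\in V_0$ we have $f(b)=\a_0$, so $\hat G(b)=G(b,f(b))=G(b,\a_0)=g(b)=0$. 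Hence $b$ satisfies the nonzero differential polynomial $\hat G$ over $k_0$, i.e.\ $b$ is differentially algebraic over $\QQQ\langle\bar\a\rangle$; but then $\a_0=f(b)$ is differentially algebraic over $\QQQ\langle\bar\a\rangle$, contrary to hypothesis. (Note this argument in fact yields the a priori stronger statement that $V_0$ has no nonempty proper subvariety defined over $\QQQ\langle\bar\a,\a_0\rangle$.)

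I expect the only delicate point to be the verification that $\hat G\ne 0$ — equivalently, that substituting $f(x)$ for $\a_0$ does not annihilate $g$; this is precisely the place where differential transcendence of $\a_0$ enters, through $[\,\a_0-f(x)\,]\subseteq I(V_0)$ together with $g\notin I(V_0)$. Everything else is routine. This is also the step that must be ``elaborated'' for the general situation of Section~\ref{Lintrans}: once the subvariety is allowed to be defined over a proper differential field extension of $k$, one can no longer substitute $\a_0\mapsto f(x)$ directly, and instead one substitutes using finitely many \emph{independent} realizations of the generic type of $V_0$, the number of which is controlled via the degree of nonminimality and Theorem~\ref{boundMorley}.
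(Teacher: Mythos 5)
Your proof is correct, and in fact establishes the a priori stronger statement that $V_0$ has no nonempty proper subvariety defined over $\QQQ\langle\bar{\a},\a_0\rangle$, as you observe. The route, however, is genuinely different from the paper's. You lift $g$ to $G(x,u)\in k_0\{x,u\}$ using that $\a_0$ is differentially transcendental over $k_0=\QQQ\langle\bar{\a}\rangle$, show $G(x,f(x))\neq 0$ because the kernel of the differential substitution $u\mapsto f(x)$ is $[u-f(x)]$ and this ideal maps into $I(V_0)$ under $u\mapsto\a_0$, and conclude that any point $b$ of a proper $k$-definable subvariety would be differentially algebraic over $k_0$, whence so would $\a_0=f(b)$. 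The paper instead passes to the deparametrized variety $V=\{f(x)=y\}$, takes a generic point $a$ of the corresponding subvariety $W$, and observes that the differential tangent space $T^\D_a V$ is in definable bijection with $\AAA^1(\UU)$, so it has no infinite-rank definable subspaces; via Lemma~\ref{kolpoly} this contradicts $W$ having nonconstant Kolchin polynomial. Your argument is shorter and more elementary; the paper's is deliberately heavier because the tangent-space linearization is exactly the device that carries over to Proposition~\ref{linnonlin} and Theorem~\ref{bdlinnonlin}, where the subvariety may live over a proper differential field extension of $\QQQ\langle\bar{\a},\a_0\rangle$ and the direct substitution $\a_0\mapsto f(x)$ is no longer available. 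One small correction to your closing remark: the general argument does not ``substitute using finitely many independent realizations of the generic type''; rather it linearizes the system $V^{[d]}$ by taking its differential tangent space at a Morley sequence of the bounded length furnished by Theorem~\ref{boundMorley}, and then shows the resulting linear system has no infinite-rank subspace.
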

\begin{proof}
Suppose towards a contradiction that $W_0$ is an infinite proper subvariety of $V_0$ defined over $\QQQ\langle \Bar{\a}, \a_0 \rangle$. Then $W_0$ is given by some positive order $\d$-polynomial $g(x)\in\QQQ\langle \Bar{\a}, \a_0 \rangle \{x\}$. By clearing the denominators of $\alpha_0$, we can write $g(x,\a_0) \in \QQQ\langle \Bar{\a}\rangle\{x, \a_0\}$. For ease of notation, let $k=\QQQ\langle \Bar{\a}\rangle$.

Let $V$ be the differential variety given by $f(x)=y$ and let $W$ be given by $g(x,y)=0$ so that each instance of $\a_0$ is replaced with the variable $y$. These varieties are now defined by $\d$-polynomials in two variables with coefficients in $k$ and
$W \subsetneq V$. Let $a=(a_1,a_2)$ be a generic point of $W$ over $k$. Since $\alpha_0$ is differentially transcendental over $\bar \alpha$ the locus of $y$ over $k$ is $\m A^1$, so it follows that $W$ is an infinite rank (proper) subvariety of $V$. Consider the orderly ranking with $x$ ranked higher than $y$. 

We claim that the generic point $a$ of $W$ lies outside the locus on $V$ where the separant of $f(x)-y$ vanishes (we will call this the singular locus of $V$). This follows because the locus of the separant of $f$ inside of $V$ is finite rank (to see this, note that the separant is a differential polynomial in $k\{x\}$ so its generic solution has $x$-coordinate differentially algebraic over $k$). 
From the fact that $a$ lies outside the singular locus of $V$ and the singular locus of $W$ (since $a$ is generic on $W$), it follows that the Kolchin polynomials of $T^\D_{a} W$ and $T^\D_{a} V$ are equal to the Kolchin polynomials of $W$ and $V$, respectively, and so $T^\D_{a} W \subsetneq T^\D_{a} V$.

For $0\leq i \leq n$, let
\[
\b_i(x)  = \frac{\partial f}{\partial x^{(i)}}(x)
\]
denote the formal derivative of $f$ with respect to the $i$th derivative of $x$.
Using this notation, the differential tangent space $T^\D_a V $ is the set of $(w,z)$ satisfying the linear differential equation
\[
z=\sum_{i=0}^n \b_i(a)w^{(i)}.
\]
From this equation, we can see that $z$ is determined by our choice of $w$, but $w$ may be chosen freely. This gives a definable bijection between $T^\D_a V$ and $\AAA^1(\UU)$.
Further, it follows that $T^\D_a V$ has no infinite rank subspaces over $k\langle a \rangle$, since if it did, we could consider the image of this subvariety under the definable bijection to $\AAA^1(\UU)$. However, $\AAA^1(\UU)$ has no infinite rank subsets, so the image must have finite rank. Therefore, $\omega \left(T^\D_a W /k\langle a \rangle\right)$ is finite, a contradiction. 
\end{proof}

\begin{remark}\label{defnbijremark}
For $X$ a differential variety over a differential field $\QQQ\langle \bar{\a}\rangle$, the following conditions on the differential tangent space $T_{\bar{a}}^\D X$ are equivalent:
\begin{enumerate}
    \item The Lascar rank, $\RU\left(T^\Delta_{\bar{a}} V\right)=\omega$.
    \item The Morley rank, $\RM\left(T^\Delta_{\bar{a}} X\right)=\omega$.
    \item The differential tangent space $T^\Delta_{\bar{a}} X$ has no proper infinite rank $\CC$-vector subspaces definable over $\QQQ\langle \bar{a},\bar{\a}\rangle$.
    \item The differential tangent space $T^\Delta_{\bar{a}} X$ has no proper infinite rank $\CC$-vector subspaces definable over $\UU$.
    \item The differential tangent space $T^\Delta_{\bar{a}} X$ has no proper infinite rank subvarieties definable over $\QQQ\langle \bar{a},\bar{\a}\rangle$.
    \item The differential tangent space $T^\Delta_{\bar{a}} X$ has no proper infinite rank subvarieties definable over $\UU$.
\end{enumerate}
The equivalence of (1) and (2) is proved in \cite{PillayPong} and the equivalence of the others follows from the Berline-Lascar decomposition \cite[Theorem 6.7]{Poizat}.
By the argument at the end of the previous proof, each of these properties is implied by the existence of a definable bijection between the differential tangent space $T^\Delta_{\bar{a}} X$ and $\AAA^1(\UU)$.
\end{remark}

\begin{remark}
The previous result shows that under very general circumstances, for instance when any single coefficient is differentially transcendental over the others, the equation has no subvarieties over the coefficients of the equation itself. We state the following result, but omit its proof, as it is analogous to the previous proof and will not be used later in this paper. 
\end{remark}

\begin{prop}
Let $f$ be a differential polynomial in one variable and $V$ the zero set of $f$. Let $\bar \a$ denote the tuple of coefficients in $f$. If $\bar{\a}$ has some element ${\a_1}$ such that $\a_1$ is differentially transcendental over $\m Q \langle \bar \a _{-1} \rangle$,\footnote{By $\bar \a _{-1}$, we mean the tuple $\bar{\a}$ excluding $\a_1$.} then $V$ has no differential subvarieties over $\m Q \langle \bar \a \rangle$ except perhaps the zero set given by the monomial of which $\a_1$ is a coefficient. 
\end{prop}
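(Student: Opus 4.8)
The plan is to run the argument of Proposition \ref{mindefn} in a slightly more general guise. Write $f(x) = a_1\,m(x) + g(x)$, where $m$ is the monomial of $f$ whose coefficient is $a_1$ and $g(x) \in k\{x\}$ collects the remaining terms, so that $a_1$ is differentially transcendental over $k := \QQQ\langle \bar a_{-1}\rangle$ and $g$ does not involve $a_1$. We may assume $\ord f = h \geq 1$ (otherwise $V \subseteq \AAA^1$ is finite), that $g \neq 0$ (otherwise $V = Z(m)$ and there is nothing to prove), and that $f$ is irreducible, so that $V$ is differentially irreducible and $m, g$ are coprime as polynomials in $x, x', \dots, x^{(h)}$. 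Suppose towards a contradiction that $W$ is an irreducible differential subvariety of $V$, defined over $\QQQ\langle \bar a\rangle = k\langle a_1\rangle$, with $W \subsetneq V$ and $W \not\subseteq Z(m)$; let $c$ be a generic point of $W$ over $k\langle a_1\rangle$, so that $m(c) \neq 0$ by genericity.

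Now mirror the two moves in the proof of Proposition \ref{mindefn}. First, from $f(c) = 0$ we obtain $a_1 = -g(c)/m(c) \in k\langle c\rangle$, so $k\langle a_1\rangle \subseteq k\langle c\rangle$; since $a_1$ is differentially transcendental over $k$ this forces $\trdeg(k\langle c\rangle/k) = \infty$, i.e.\ $c$ is differentially transcendental over $k$. (This is the analogue of the ``locus of $y$ over $k$ is $\AAA^1$'' step: one differentially transcendental coefficient forces any solution at which $m$ does not vanish to be itself differentially transcendental over $k$.) Second, since $W \subsetneq V = Z(f)$ with $f$ irreducible, pick a differential polynomial $q \in k\langle a_1\rangle\{x\}$ vanishing on $W$ but not on $V$; after clearing $a_1$-denominators (which changes neither $W$ nor the fact that $q$ does not vanish on $V$) we may take $q = Q(x, a_1)$ with $Q(x,y) \in k\{x,y\}$, so $Q(c, a_1) = 0$. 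Substituting $y = -g(x)/m(x)$ into $Q$ and clearing $m(x)$-denominators gives $\tilde Q(x) := m(x)^{N}\,Q\!\bigl(x, -g(x)/m(x)\bigr) \in k\{x\}$ with $\tilde Q(c) = m(c)^{N}Q(c,a_1) = 0$. The crux is that $\tilde Q \neq 0$: if it were identically zero then $m(x)y + g(x)$ would divide $Q(x,y)$ in $k\langle x\rangle[y]$, so $m(x)^{M}Q(x,y) = \bigl(m(x)y + g(x)\bigr)R(x,y)$ for some $R \in k\{x\}[y]$, and specializing $y \mapsto a_1$ gives $m(x)^{M}Q(x,a_1) = f(x)R(x,a_1) \in [f]$; hence $Q(x,a_1)$ vanishes on $V \setminus Z(m)$ and therefore (this being a dense open subset of the irreducible $V$, as $V \not\subseteq Z(m)$) on all of $V$, contradicting the choice of $q$. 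So $\tilde Q$ is a nonzero element of $k\{x\}$ with $\tilde Q(c) = 0$, making $c$ differentially algebraic over $k$ — contradicting the first move, and completing the proof.

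I do not expect a serious obstacle here: the argument is essentially the method of Proposition \ref{mindefn} (which is why it is omitted), and one could just as well homogenize $a_1 \mapsto y$, pass to $V' = Z\bigl(y\,m(x) + g(x)\bigr) \subseteq \AAA^2$, and rerun the differential-tangent-space computation verbatim; the one genuinely new feature is that the coefficient $m(x)$ of $y$ may vanish, so one branches on $m(c)$, the branch $m(c) = 0$ forcing $W \subseteq Z(m)$ and thereby producing the exceptional subvariety. The points that need care are bookkeeping: (i) the reduction to $f$ irreducible, which is exactly where the hedge ``except perhaps $Z(m)$'' is used — if $f$ factors, a factor of the form $a_1 p(x) + q(x)$ with $p$ a proper divisor of $m$ is itself an infinite subvariety of $V$ not contained in $Z(m)$ — and (ii) the elimination establishing $\tilde Q \neq 0$ above. (The same elimination shows $Z(f, S_f) \subseteq Z(m)$, which is what lets the tangent-space version go through without extra hypotheses, since the generic point $c$, lying outside $Z(m)$, then automatically avoids the separant locus required by Lemma \ref{kolpoly}.)
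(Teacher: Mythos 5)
Your main argument is not the one the paper has in mind: the proof is omitted because it is meant to be the same differential-tangent-space/Kolchin-polynomial argument as Proposition \ref{mindefn} (replace $a_1$ by a variable $y$ and observe that off $Z(m)$ the tangent space is a graph over one coordinate), which you only sketch in your closing remark. Your substitute is a direct elimination argument contradicting the differential transcendence of $c$ over $k=\QQQ\langle \bar a_{-1}\rangle$, and the strategy is sound -- indeed it yields something stronger than the tangent-space method, namely that every point of $V$ off $Z(m)$ realizes the generic type over $\QQQ\langle\bar a\rangle$, with no finiteness/infiniteness bookkeeping needed -- but the two steps you yourself flag as delicate are exactly where the write-up has problems.

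First, the crux claim ``if $\tilde Q\equiv 0$ then $m(x)y+g(x)$ divides $Q(x,y)$ in $k\langle x\rangle[y]$'' is not valid as stated: after clearing denominators, the coefficients of $q$ are differential rational functions of $a_1$, so $Q(x,y)\in k\{x,y\}$ will in general involve $y',y'',\dots$ and is not an element of $k\langle x\rangle[y]$; having $y=-g/m$ as a ``root'' of a differential polynomial gives no divisibility by a form linear in $y$ alone. The correct statement is differential ideal membership: Ritt-reduce $Q$ modulo $P:=m(x)y+g(x)$, whose initial and separant (with leader $y$) are both $m$, to get $m^NQ=\sum_j C_j\,\delta^jP+R$ with $R\in k\{x\}$; the substitution $y\mapsto -g/m$ kills every $\delta^jP$, so $\tilde Q\equiv 0$ forces $R=0$, i.e.\ $m^NQ\in[P]$, and specializing $y\mapsto a_1$ gives $m^NQ(x,a_1)\in[f]$, which is all your argument uses. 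Second, ``$f$ irreducible, so $V$ is differentially irreducible'' is false in general: $Z(f)$ may have essential singular components (lying in $Z(f,S_f)$), so the density of $V\setminus Z(m)$ in $V$ is not automatic. This is repairable without irreducibility of $V$: since $W\not\subseteq Z(m)$ and $c$ is generic, $W\subseteq \overline{V\setminus Z(m)}$, so simply choose $q$ over $k\langle a_1\rangle$ vanishing on $W$ but not identically on $\overline{V\setminus Z(m)}$; the membership $m^NQ(x,a_1)\in[f]$ then contradicts this choice directly, with no density step (alternatively, invoke your own correct parenthetical observation that $Z(f,S_f)\subseteq Z(m)$, so every component of $V$ not inside $Z(m)$ lies in the general component). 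With these two repairs, and reading the statement as intended (no infinite proper subvarieties over $\QQQ\langle\bar a\rangle$ beyond those contained in $Z(m)$, which also absorbs your discussion of reducible $f$), your proof goes through.
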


The previous proposition works in such generality, in part because we have restricted the coefficient field. In various situations, identifying differential subvarieties defined over the field of definition of a variety $V$ is a \emph{much easier problem} than identifying differential subvarieties of $V$ defined over differential field extensions. For instance, in \cite{Nishioka}, Nishioka shows that the equations corresponding to automorphic functions of dense subgroups of $SL_2$ have to differential subvarieties over $\m C $. In the special case of genus zero Fuchsian functions, a much more difficult argument was required to extend the result to differential subvarieties over differential field extensions \cite{casale2020ax}, answering a long-standing open problem of Painlev\'e. 

There is one general purpose model theoretic tool which restricts the field extensions one needs to consider. We will use a principle in stability theory, generally related to \emph{stable embeddedness} (see for instance see \cite{ChHr-AD1} where this general type of result is referred to as the \emph{Shelah reflection principle}). For the following result see Lemma 2.28 \cite{GST}: 

\begin{lemma} \label{morseq}
In a superstable theory, let $A\subseteq B$ and $p\in S(B)$ which forks over $A$. Then there is an indiscernible sequence $(a_i:i\in \NNN)$ such that the canonical base of $p$ is contained in the definable closure of $A, a_1, \ldots , a_d$ where $a_1, \ldots, a_d$ is a finite initial segment of this indiscernible sequence.
\end{lemma}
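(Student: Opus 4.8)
The plan is to take the required sequence to be a Morley sequence of the global nonforking extension of $p$, and then to invoke the standard fact that in a totally transcendental theory the canonical base of a stationary type lies in the definable closure of finitely many terms of any of its Morley sequences. First I would pass to $T^{\mathrm{eq}}$ and, replacing $p$ by a nonforking extension to the algebraic closure of $B$ in $T^{\mathrm{eq}}$ — which changes neither the canonical base nor the hypothesis that $p$ forks over $A$, and leaves the desired conclusion (which mentions only $A$) unaffected — I may assume $p$ is stationary; write $c = \Cb(p)$.

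Next, let $\bar p$ be the unique global nonforking extension of $p$, so that $\Cb(\bar p) = c$, and let $(a_i : i \in \NNN)$ be a Morley sequence of $\bar p$ over $B$, meaning that for each $i$ the element $a_i$ realizes the restriction of $\bar p$ to $B \cup \{a_j : j < i\}$. Such a sequence is indiscernible over $B$, a fortiori over $A$, and each $a_i$ realizes $p$. The key classical input is that $\Cb(\bar p) \subseteq \dcl^{\mathrm{eq}}(a_i : i \in \NNN)$: the $\varphi$-definitions of the average type of the sequence have canonical parameters in the definable closure of finitely many of the $a_i$, and $\bar p$ is precisely this average type. Since $T$ is totally transcendental, $c$ is interdefinable with a single finite imaginary tuple, so in fact $c \subseteq \dcl^{\mathrm{eq}}(a_1, \dots, a_d)$ for some $d \in \NNN$, whence $c \subseteq \dcl^{\mathrm{eq}}(A, a_1, \dots, a_d)$, as required.

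The subtle point — really the only one — is that the sequence must be extracted from a forking extension of $p$ rather than from its restriction to $A$: a Morley sequence taken inside $p$ restricted to $A$ has canonical base in $\acl^{\mathrm{eq}}(A)$ and therefore cannot capture $c$, which lies outside $\acl^{\mathrm{eq}}(A)$ exactly because $p$ forks over $A$. Everything else is bookkeeping with standard facts about stable — indeed totally transcendental, as for $\mathrm{DCF}_0$ — theories: stationarity over algebraically closed sets, existence of nonforking extensions and Morley sequences, and the finitary nature of canonical bases. (In a general stable theory one still obtains $c \subseteq \dcl^{\mathrm{eq}}\big(A, (a_i : i \in \NNN)\big)$ for the full infinite sequence; the reduction to a finite initial segment is the one place where finitary canonical bases, available in the case of interest here, are used.) So I do not anticipate a genuine obstacle beyond this.
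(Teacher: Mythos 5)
The paper does not prove this lemma; it cites Lemma~2.28 of the reference \cite{GST} and takes the statement as given. Your argument reconstructs the standard proof that would appear there: extract a Morley sequence from the global nonforking extension of (a stationarization of) $p$, observe that the canonical base lies in the definable closure of the sequence because $\bar p$ is the average type and each $\varphi$-definition has its canonical parameter definable over finitely many terms, and then invoke finiteness of canonical bases to descend to a finite initial segment. That is the right mechanism, and you also correctly identify that the last step --- passing from the whole sequence to $a_1,\dots,a_d$ --- is exactly where one uses that canonical bases are (inter)definable with finite tuples, which holds in superstable (in particular totally transcendental) theories such as $\mathrm{DCF}_0$ but is not automatic in an arbitrary stable theory. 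The lemma in the paper is stated under the bare hypothesis of stability, which is slightly looser than what your argument (and, indeed, what the application to $\mathrm{DCF}_0$) actually requires; this is a real subtlety, and you flag it accurately rather than eliding it. Since there is no proof in the paper to compare against, the only substantive remark is that your route is the expected one, and your parenthetical caveat about the stable versus superstable distinction is worth keeping explicit.
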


Let $f$ and $V_0$ be as before and let $d\in \NNN$.
Consider $V_0^{[d]}$, the set of $d$-tuples so that each coordinate $x_i$ satisfies $f(x_i)=\a_0$.
As before, we can replace each instance of $\a_0$ with a new variable $y$, resulting in a differential variety $V^{[d]}$ defined by the system of equations:
\[
\left\{
\begin{array}{lcl}
f(x_1)  & = & y \\
f(x_2)  & = & y \\
  & \vdots &  \\
f(x_d)  & = & y 
\end{array}
\right.
\]

\begin{prop} \label{linnonlin}
Let $f(x)$ be a non-linear order $h\geq 1$ differential polynomial with coefficients $\bar{\a}$, let $\a_0$ be differentially transcendental over $\QQQ\langle\bar{\a}\rangle$, and let $V_0$ be the differential variety defined by $f(x)=\a_0$. Suppose that for all $d\in \NNN$ and for all indiscernible sequences $\bar{a}=(a_1,\ldots a_d)$ in the generic type of $V_0$ with $a_1, \ldots a_d$ algebraically independent over $\QQQ\langle \bar{\a}\rangle$, we have that the differential tangent space $T^\D_{\bar{a}}\left( V^{[d]} \right)$ has no proper infinite rank $\CC$-vector subspaces definable over $\QQQ\langle \bar{\a}, \bar{a}\rangle$.
Then $V_0$ is strongly minimal.
\end{prop}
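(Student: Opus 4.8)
The plan is to argue by contradiction: suppose $V_0$ is not strongly minimal, so its generic type $p$ has $\RU(p) > 1$ (it cannot be that $\RU(p)\le 1$ with $V_0$ infinite non-strongly-minimal, since order $h\ge 1$ and Proposition~\ref{mindefn} already rules out subvarieties over the base). Then there is a differential field extension $K_1 \supseteq \QQQ\langle\bar\a,\a_0\rangle$ and a nonalgebraic forking extension of $p$ over $K_1$; equivalently, $V_0$ has an infinite proper differential subvariety $W_0$ defined over some extension. First I would invoke Lemma~\ref{morseq} (the Shelah reflection / stable embeddedness principle): the canonical base of this forking extension lies in $\dcl(\QQQ\langle\bar\a,\a_0\rangle, a_1,\dots,a_d)$ for some finite initial segment $a_1,\dots,a_d$ of an indiscernible sequence, which — since $p$ is the generic type of $V_0$ and is stably embedded (or directly, since we may take the indiscernible sequence to be a Morley sequence in $p$) — can be taken to consist of realizations of $p$, i.e.\ generic solutions of $f(x)=\a_0$. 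Thus the infinite proper subvariety $W_0$ of $V_0$ is defined over $\QQQ\langle\bar\a,\a_0, a_1,\dots,a_d\rangle$ for an indiscernible sequence $\bar a = (a_1,\dots,a_d)$ in $p$.

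Next I would transfer this configuration to the $d$-fold ``linearized'' picture, mimicking the proof of Proposition~\ref{mindefn}. Replace $\a_0$ by a variable $y$ throughout: $V^{[d]}$ is the variety $\{f(x_i)=y : i\le d\}$, defined over $k=\QQQ\langle\bar\a\rangle$, and the tuple $(\bar a, \a_0)=(a_1,\dots,a_d,\a_0)$ is a generic point of (a component of) $V^{[d]}$ over $k$ — here one uses that $\a_0$ is differentially transcendental over $\bar\a$ (so the $y$-locus is $\AAA^1$) together with indiscernibility of $\bar a$ over $\QQQ\langle\bar\a,\a_0\rangle$ to see that $(\bar a,\a_0)$ is indeed generic. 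The subvariety $W_0 \subseteq V_0$, defined over $\QQQ\langle\bar\a,\a_0,\bar a\rangle$, pulls back (after clearing denominators in $\a_0$ and renaming it $y$) to an infinite-rank proper subvariety $W$ of $V^{[d]}$ defined over $k\langle\bar a\rangle$, with $\omega(W/k\langle\bar a\rangle)$ of differential type $1$ (infinite rank) because the fibre over generic $y$ is the proper infinite subvariety $W_0$. Then, exactly as before, I would check that the generic point $(\bar a,\a_0)$ of $V^{[d]}$ avoids the singular locus (vanishing locus of the separants) — the separants here are $\d$-polynomials over $k\langle\bar a\rangle$ whose zero sets inside $V^{[d]}$ are finite rank, since solving them makes some $x_i$ $\d$-algebraic over $k\langle\bar a, \text{other coords}\rangle$ — and also avoids the singular locus of $W$ by genericity. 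Applying Lemma~\ref{kolpoly} at this point gives $\omega(W/k\langle\bar a\rangle) = \omega(T^\D_{(\bar a,\a_0)}W \, / \, k\langle\bar a,\a_0\rangle)$ and likewise for $V^{[d]}$, so the strict inclusion $W \subsetneq V^{[d]}$ of varieties of equal Kolchin polynomial to their tangent spaces yields a proper infinite-rank subspace $T^\D_{(\bar a,\a_0)}W \subsetneq T^\D_{(\bar a,\a_0)}V^{[d]}$ defined over $k\langle\bar a,\a_0\rangle = \QQQ\langle\bar\a,\a_0,\bar a\rangle$.

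Finally I would eliminate the stray parameter $\a_0$. The differential tangent space $T^\D_{(\bar a,\a_0)}V^{[d]}$ is cut out by the linear equations $z = \sum_{i=0}^h \b_i(a_j)w_j^{(i)}$ for $j=1,\dots,d$ — and, crucially, the coefficients $\b_i(a_j) = \tfrac{\partial f}{\partial x^{(i)}}(a_j)$ depend only on $\bar a$ and $\bar\a$, not on $\a_0$; the variable $\a_0$ has disappeared from the defining equations of the tangent space (it only entered linearly on the original side). Hence $T^\D_{(\bar a,\a_0)}V^{[d]}$, as a linear differential variety in the variables $(w_1,\dots,w_d,z)$, is defined over $\QQQ\langle\bar\a,\bar a\rangle$, and is precisely what we would call $T^\D_{\bar a}(V^{[d]})$ in the notation of the hypothesis. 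Now $\a_0$, being differentially transcendental over $\QQQ\langle\bar\a,\bar a\rangle$, is independent from this linear variety; so the proper infinite-rank subspace we produced over $\QQQ\langle\bar\a,\a_0,\bar a\rangle$ forks over $\QQQ\langle\bar\a,\bar a\rangle$ only through $\a_0$, and an independence/base-change argument (the canonical base of a subvariety of a variety independent from $\a_0$ cannot genuinely involve $\a_0$) lets us conclude there is already a proper infinite-rank subspace of $T^\D_{\bar a}(V^{[d]})$ defined over $\QQQ\langle\bar\a,\bar a\rangle$. This contradicts the hypothesis, completing the proof.

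The main obstacle, and the step I would be most careful about, is the reduction in the last paragraph — making precise that no subvariety of $T^\D_{\bar a}(V^{[d]})$ which is genuinely defined over the larger field $\QQQ\langle\bar\a,\a_0,\bar a\rangle$ can exist without one already existing over $\QQQ\langle\bar\a,\bar a\rangle$, given that $\a_0$ is $\d$-transcendental over the latter. One wants to say: if $T^\D_{\bar a}(V^{[d]})$ (a variety over $\QQQ\langle\bar\a,\bar a\rangle$) had a proper infinite-rank subvariety over $\QQQ\langle\bar\a,\bar a,\a_0\rangle$, then its canonical base lies in $\QQQ\langle\bar\a,\bar a\rangle^{\mathrm{alg}}$ by independence of $\a_0$, hence (after a finite cover / passing to a conjugate) we get such a subvariety over $\QQQ\langle\bar\a,\bar a\rangle$ itself — an infinite-rank subvariety remains infinite-rank under these operations. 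A secondary, more bookkeeping-type obstacle is verifying genericity of $(\bar a,\a_0)$ on a single irreducible component of $V^{[d]}$ and that the relevant separants vanish only on a finite-rank locus; both are routine adaptations of the corresponding arguments in Proposition~\ref{mindefn}, but they must be stated for the $d$-fold system rather than the single equation.
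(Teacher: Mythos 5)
Your overall route is the same as the paper's: assume non-minimality, note that Proposition \ref{mindefn} forces the forking to occur over a proper extension, use Lemma \ref{morseq} to witness it over $\QQQ\langle\bar{\a},\a_0\rangle$ together with finitely many realizations $a_1,\dots,a_d$ of the generic type, replace $\a_0$ by a variable $y$ to form $V^{[d]}$ and an infinite-rank proper subvariety through $(a_1,\dots,a_d,\a_0)$, and then combine the fact that the Kolchin polynomial witnesses forking with Lemma \ref{kolpoly} (after checking the separant conditions) to transfer the drop to the differential tangent space, contradicting the hypothesis. The paper packages the middle step a bit more concretely — minimality of $d$ gives a differential polynomial $g$ of order $<h$ over $\QQQ\langle\bar{\a},\a_0,a_2,\dots,a_d\rangle$ vanishing at $a_1$, and clearing denominators and replacing $\a_0$ by $y$ gives the subvariety $U\subset V^{[d]}$ with generic point $(a_1,\dots,a_d,\a_0)$ — but your canonical-base phrasing amounts to the same thing.

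The one genuine flaw is precisely the step you flag as the main obstacle. You assert that $\a_0$ is differentially transcendental over $\QQQ\langle\bar{\a},\bar a\rangle$ and propose an independence/canonical-base descent to remove $\a_0$ from the field of definition of the tangent-space subvariety. That premise is false: since $a_1$ solves $f(x)=\a_0$, we have $\a_0=f(a_1)\in\QQQ\langle\bar{\a},a_1\rangle\subseteq\QQQ\langle\bar{\a},\bar a\rangle$, so $\a_0$ is definable from (any single term of) the sequence over the coefficients, and the descent argument cannot be run as stated. Fortunately it is also unnecessary: the fields $\QQQ\langle\bar{\a},\a_0,\bar a\rangle$ and $\QQQ\langle\bar{\a},\bar a\rangle$ coincide, and, as you yourself observe, the defining equations of $T^\D_{(\bar a,\a_0)}\left(V^{[d]}\right)$ involve only the coefficients $\b_i(a_j)$, so the infinite-rank subspace you construct is already defined over $\QQQ\langle\bar{\a},\bar a\rangle$; this is exactly how the paper handles it (its tuple $\bar a$ simply includes $\a_0$). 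Deleting the spurious transcendence claim and replacing it with this observation makes your proposal essentially identical to the paper's proof.
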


\begin{proof}
Suppose $V_0$ is not strongly minimal and let $p(x)\in S_1\left(\QQQ\langle \Bar{\a},\a_0 \rangle \right)$ be the type of a generic solution of $V_0$. 
By Proposition \ref{mindefn}, $V_0$ does not have any infinite subvarieties defined over $\QQQ\langle \Bar{\a},\a_0 \rangle$, so $p$ has a forking extension $q$ over a differential field extension $K>\QQQ\langle \Bar{\a},\a_0 \rangle$.
By Lemma \ref{morseq}, there is some finite  $d$ and a Morley sequence $(a_1, \ldots , a_d)$ for $q$ such that $(a_1, \ldots , a_d)$ is not $\QQQa$-independent. Consider the minimal such $d$. 
Then $\tp\left(a_1/ \mathbb Q \langle \bar \alpha, \alpha_0, a_2, a_3, \ldots , a_d \rangle  \right)$ forks over $\QQQa$.
Since these are types over differential fields, this happens exactly when the Kolchin polynomial of $\left(a_1/ \mathbb Q \langle \bar \alpha, \alpha_0, a_2, a_3, \ldots , a_d \rangle  \right)$ is strictly less than the Kolchin polynomial of $\left(a_1/\QQQa\right)$ because the Kolchin polynomial witnesses forking.

Thus, there is a differential polynomial $g(x)\in \QQQ\langle \bar{\a},\a_0,a_2, \ldots , a_d\rangle \{x\}$ so that $g(a_1)=0$ and $g$ has order strictly less than the order of $f$.
By clearing denominators, we can write $g(x_1, \ldots , x_d ) \in\QQQ \langle \bar{\a}, \a_0 \rangle \{x_1 , \ldots , x_d\}$ such that $g(a_1, \ldots , a_d)=0$.
Let $U_0\subset V_0^{[d]}$ be the vanishing set of $g(x_1,\ldots , x_d)$.
Just as with $V$, we can replace $\a_0$ with a new variable $y$ after clearing denominators again, giving a $\QQQ\langle \bar{\a}\rangle$-polynomial $g(x_1,\ldots , x_d, y)$ and the corresponding variety $U\subset V^{[d]}$.
The Kolchin polynomial $\omega \left( U / \QQQ\langle \bar{\a}\rangle\right)$ is nonconstant.
Let $\bar{a}=(a_1, \ldots , a_d, \a_0)$ and notice that $\bar{a}$ is a generic point of $U$ over $\QQQ\langle \a \rangle$.
By Lemma \ref{kolpoly}, the Kolchin polynomial of the differential tangent space
$\omega \left( T^\Delta_{\bar{a}} (U)/\QQQ\langle \bar{\a}, \bar{a}\rangle \right)$ is also nonconstant,
so $T^\Delta_{\bar{a}} (V^{[d]})$ has an infinite rank subspace over $\QQQ\langle \bar{\a,} \bar{a}\rangle$, a contradiction to our assumption. 
\end{proof}

\begin{remark} 
Using Lemma \ref{morseq} together with Proposition \ref{linnonlin} gives a strategy for establishing the strong minimality of nonlinear differential equations with generic coefficients, but \emph{only if one can verify the hypothesis of Proposition \ref{linnonlin}}. A priori, this looks quite hard since it would require the analysis of systems of linear differential equations in $n$ variables for all $n \in \m N.$ This may be possible via a clever inductive argument for specially selected classes of equations, but Theorem \ref{boundMorley} gives a bound for the number of variables we need to consider. 
\end{remark} 

\begin{thm} \label{bdlinnonlin} Let $f(x)$ be a non-linear order $h\geq 1$ differential polynomial with coefficients $\bar{\a}$, let $\a_0$ be differentially transcendental over $\QQQ\langle\bar{\a}\rangle$, and let $V_0$ be the differential variety defined by $f(x)=\a_0$. Let $p$ be the generic type of $V_0$. Suppose that for some $d \geq \nmdeg{p}+1$, the following property holds: for any $\bar{a} = (a_1, \ldots , a_{d} )$ realizing the generic type of $V_0$ with $a_1, \ldots, a_d$ algebraically independent over $\QQQ\langle \bar{\a}\rangle$, the differential tangent space $T^\D_{\bar{a}}\left( V^{[d]} \right)$ has no definable proper infinite rank $\CC$-vector subspace over $\QQQ\langle \bar{\a}, \bar{a}\rangle$.
Then $V_0$ is strongly minimal.
\end{thm}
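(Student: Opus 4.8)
The plan is to rerun the proof of Proposition~\ref{linnonlin} essentially verbatim, replacing the appeal to Lemma~\ref{morseq} --- which produces an indiscernible sequence of uncontrolled length --- by the definition of $\nmdeg(p)$ together with Theorem~\ref{boundMorley}, which bound the size of the configuration one actually needs. Suppose $V_0$ is not strongly minimal. Then $\RU(p)\geq 2$: since $\ord(V_0)=h\geq 1$ the set $V_0$ is infinite, so $\RU(p)\neq 0$, while $\RU(p)=1$ would make $p$ minimal and hence $V_0$ strongly minimal. By Theorem~\ref{boundMorley}, $\nmdeg(p)$ is then a well-defined positive integer, and, setting $d:=\nmdeg(p)+1$, we have $d\leq\RU(p)+2\leq\ord(V_0)+2$. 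By the definition of $\nmdeg(p)=d-1$, there is a Morley sequence $(a_1,\dots,a_{d-1})$ of $p$ over $\QQQ\langle\bar\a,\a_0\rangle$ such that $p$ has a nonalgebraic forking extension $q$ over $L:=\QQQ\langle\bar\a,\a_0,a_1,\dots,a_{d-1}\rangle$; fix a realization $a_d\models q$.

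The crucial observation is that $\bar a=(a_1,\dots,a_d)$ is then an indiscernible sequence in the generic type $p$ of $V_0$ whose coordinates are algebraically independent. Algebraic independence follows just as in the proof of Proposition~\ref{mindefn}: the Kolchin polynomial of $V_0$ is the nonzero constant $h$, so an element algebraic over a differential subfield has Kolchin polynomial $0$ over that field; since $(a_1,\dots,a_{d-1})$ is a Morley sequence and $a_d\models q$ is nonalgebraic over $L$, each $a_i$ is transcendental over the differential field generated by $\QQQ\langle\bar\a,\a_0\rangle$ and $a_1,\dots,a_{i-1}$. Indiscernibility is where the minimality clause in the definition of $\nmdeg(p)$ is used. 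Any proper subset of $\{a_1,\dots,a_{d-1}\}$ is, the theory being stable, itself a Morley sequence of $p$, now of length $<\nmdeg(p)$; hence $p$ has no nonalgebraic forking extension over $\QQQ\langle\bar\a,\a_0\rangle$ together with the elements of that subset, so the restriction of $q$ to the corresponding field does not fork over $\QQQ\langle\bar\a,\a_0\rangle$, that is, $a_d$ is independent over $\QQQ\langle\bar\a,\a_0\rangle$ from that subset. Combined with the fact that $(a_1,\dots,a_{d-1})$ is a Morley sequence, this shows that every subtuple of $(a_1,\dots,a_d)$ of length at most $d-1$ realizes the corresponding Morley power of $p$; since at length $d$ there is only one subtuple, nothing further is imposed, and $(a_1,\dots,a_d)$ is indiscernible.

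It then only remains to transcribe the computation of Proposition~\ref{linnonlin}. As $q$ forks over $\QQQ\langle\bar\a,\a_0\rangle$, the Kolchin polynomial drops: $\omega(a_d/L)<\omega(a_d/\QQQ\langle\bar\a,\a_0\rangle)=h$, and since $f(a_d)=\a_0\in L$ makes $a_d$ differentially algebraic over $L$, there is $g\in L\{x_d\}$ of order $\leq h-1$ with $g(a_d)=0$. Clearing denominators (first in $a_1,\dots,a_{d-1}$, then in $\a_0$) and replacing $\a_0$ by a new variable $y$ yields $G\in\QQQ\langle\bar\a\rangle\{x_1,\dots,x_d,y\}$ and a proper subvariety $U\subsetneq V^{[d]}$ with generic point $(a_1,\dots,a_d,\a_0)$ over $\QQQ\langle\bar\a\rangle$, whose Kolchin polynomial over $\QQQ\langle\bar\a\rangle$ is nonconstant (of degree one, as $\a_0$ is differentially transcendental over $\bar\a$) and strictly below that of $V^{[d]}$. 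By Lemma~\ref{kolpoly}, at the point $(a_1,\dots,a_d,\a_0)$ --- which, being generic on $U$, avoids the separant loci of $U$ and of $V^{[d]}$, as in Proposition~\ref{mindefn} --- the Kolchin polynomial $\omega(T^\D_{\bar a}U/\QQQ\langle\bar\a,\bar a\rangle)$ is nonconstant and strictly below $\omega(T^\D_{\bar a}V^{[d]}/\QQQ\langle\bar\a,\bar a\rangle)$. Since $\a_0=f(a_1)\in\QQQ\langle\bar\a,a_1\rangle$, we have $\QQQ\langle\bar\a,\bar a\rangle=\QQQ\langle\bar\a,a_1,\dots,a_d\rangle$, so $T^\D_{\bar a}(V^{[d]})$ has a definable proper infinite rank subspace over $\QQQ\langle\bar\a,\bar a\rangle$. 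As $d\leq\nmdeg(p)+1\leq\ord(V_0)+2$ and $\bar a$ is an indiscernible sequence in $p$ with algebraically independent coordinates, this contradicts the hypothesis, and $V_0$ is strongly minimal. The only point needing genuine care --- everything else transcribing the proofs of Propositions~\ref{mindefn} and~\ref{linnonlin} --- is the indiscernibility of the configuration $(a_1,\dots,a_d)$ established in the second paragraph, which is precisely what permits the hypothesis, stated for indiscernible sequences, to be applied to the forking witness supplied by the degree of nonminimality.
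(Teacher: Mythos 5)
Your proof is correct and follows essentially the same route as the paper: Proposition~\ref{mindefn} to dispose of subvarieties over the base field, the degree of nonminimality (bounded via Theorem~\ref{boundMorley}) to control the length of the witnessing configuration, and the Kolchin-polynomial/differential tangent space argument of Proposition~\ref{linnonlin} to contradict the hypothesis; your explicit verification that $(a_1,\dots,a_d)$ is an indiscernible sequence with algebraically independent coordinates (using the minimality in the definition of $\nmdeg(p)$ together with stationarity) is precisely the step the paper leaves implicit. One small attribution point: your claim that $\RU(p)=1$ would force $V_0$ to be strongly minimal is not a pure rank statement --- it relies on Proposition~\ref{mindefn} (and irreducibility of $V_0$) to know that every nonalgebraic type in $V_0$ over an extension field restricts to $p$ --- but since that proposition is already part of your argument, this does not affect correctness.
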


\begin{proof} 
By Proposition \ref{mindefn}, there are no subvarieties of $V_0$ defined over the differential field generated by the coefficients of $f$. So, we need only consider forking extensions of the generic type of $V_0$. By definition of nonminimality degree, if there is an infinite proper differential subvariety of $V_0$, then it is defined over (the algebraic closure of) a Morley sequence of length at most $\nmdeg(p)$. 
Thus, there is a proper subvariety of $W_0 \subset V_0^{[d]}$ which surjects onto the first $d-1$ coordinates such that the fiber over a generic point in the first $d-1$ coordinates is a forking extension of the generic type of $V_0$. But then by the argument of Proposition \ref{linnonlin}, there is a definable proper infinite rank subspace of $T^\D_{\bar{a}}\left( V^{[d]} \right)$ over $\QQQ\langle \bar{\a}, \bar{a}\rangle$.
\end{proof}




\section{Strong minimality of generic equations} \label{genstrongmin}
\subsection{A first example}
\begin{theorem}\label{thm:firstexample}
Let $X_\a$ be the differential variety given by 
\begin{equation} \label{pfcon}
x'' + \sum _{i=1} ^n \alpha _i x^i =\alpha
\end{equation}
for some $n \geq 8$, where $(\alpha, \alpha_0,\ldots , \alpha _n)$ is a tuple of independent differential transcendentals over $\m Q.$ Then $X_\a$ is strongly minimal.
\end{theorem}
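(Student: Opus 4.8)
The plan is to apply Theorem \ref{bdlinnonlin} to the equation $f(x) = x'' + \sum_{i=1}^n \alpha_i x^i$, which has order $h=2$ and coefficients $\bar\alpha = (\alpha_1,\ldots,\alpha_n)$ differentially transcendental and independent over $\mathbb Q$. Let $p$ be the generic type of $X_\alpha$. Since $\operatorname{ord}(X_\alpha) = 2$, Theorem \ref{boundMorley} gives $\nmdeg(p) \leq \RU(p)+1 \leq 3$, so by Theorem \ref{bdlinnonlin} it suffices to show that for every $d \leq 4$ and every indiscernible sequence $\bar a = (a_1,\ldots,a_d)$ in $p$ with $a_1,\ldots,a_d$ algebraically independent, the differential tangent space $T^\Delta_{\bar a}(V^{[d]})$ has no proper infinite-rank definable subspace over $\mathbb Q\langle \bar\alpha, \bar a\rangle$. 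Here $V^{[d]}$ is cut out by $f(x_j) = y$ for $j=1,\ldots,d$, so writing $\beta_i(x) = \partial f/\partial x^{(i)}$ we have $\beta_2 \equiv 1$, $\beta_1 \equiv 0$, and $\beta_0(x) = \sum_{i=1}^n i\,\alpha_i x^{i-1}$; the tangent space $T^\Delta_{\bar a}(V^{[d]})$ is the linear system in $(w_1,\ldots,w_d,z)$ given by $z = w_j'' + \beta_0(a_j) w_j$ for $j=1,\ldots,d$.

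The first step is to reduce to an intrinsic statement about this linear system: eliminating $z$ by subtracting consecutive equations, a point $(w_1,\ldots,w_d,z)$ of the tangent space is determined by $(w_1,\ldots,w_d)$ lying in the solution set $L$ of the homogeneous system $w_j'' - w_1'' + (\beta_0(a_j) w_j - \beta_0(a_1) w_1) = 0$ for $j = 2,\ldots,d$, together with $z = w_1'' + \beta_0(a_1)w_1$; this is a linear differential variety in $d$ variables, and having no proper infinite-rank subspace over $\mathbb Q\langle\bar\alpha,\bar a\rangle$ is what we must verify. The natural approach, following the philosophy of the Wolf/DeVilbiss circle of ideas referenced in the introduction, is to compute the Lascar rank (equivalently the Kolchin polynomial, via the linear structure) of $L$ and of its putative proper subspaces directly, exploiting that the only "coefficients" appearing are $\beta_0(a_j) = \sum_{i\geq 1} i\alpha_i a_j^{i-1}$, which are polynomials in $a_j$ of degree $n-1 \geq 7$ over $\mathbb Q\langle\bar\alpha\rangle$. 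The key leverage is genericity of the $\alpha_i$ and independence of the $a_j$: because $\bar a$ is an algebraically independent indiscernible sequence in $p$ and the $\alpha_i$ are differentially transcendental, any algebraic dependence among the quantities $\{\theta(\beta_0(a_j)) : \theta\in\Theta, j\leq d\}$ and the $w_j^{(k)}$ forced by membership in a proper subspace would, after specializing, translate into a differential-algebraic constraint relating at most $d \leq 4$ of the solutions $a_j$ to fewer derivatives than $p$ allows — contradicting that $a_1$ is generic over $\mathbb Q\langle\bar\alpha,\alpha_0,a_2,\ldots,a_d\rangle$. Making this precise is essentially a linear-algebra-over-a-differential-field computation showing the associated system is "maximally underdetermined," i.e. that its solution space behaves like $\mathbb A^1$ and hence inherits no small subspace, exactly as in the single-variable computation of Proposition \ref{mindefn}.

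The main obstacle is precisely this last point: ruling out proper infinite-rank subspaces of the $d$-variable linear system $L$ for $d$ up to $4$. Unlike the one-variable case, where $T^\Delta_a V \cong \mathbb A^1$ trivially, here $L$ is genuinely a system and one must argue that the genericity of the coefficients $\alpha_i$ — concretely, that $n \geq 8$ so that the polynomials $\beta_0(a_j)$ have enough "room" — prevents any linear syzygy over $\mathbb Q\langle\bar\alpha,\bar a\rangle$ from producing a lower-order differential equation satisfied by a nonzero tuple in $L$. I expect the bound $n \geq 8$ enters exactly to guarantee $d-1 \leq 3$ independent differences can be handled: one shows that a proper subspace would yield, via Lemma \ref{kolpoly} and the argument of Proposition \ref{linnonlin}, an infinite-rank subvariety $U \subsetneq V^{[d]}$ defined over $\mathbb Q\langle\bar\alpha\rangle$, whose generic point would force $\operatorname{trdeg}$ relations among $a_1,\ldots,a_d, \alpha_0$ incompatible with $a_1$ being a generic solution of $f(x)=\alpha_0$ over the others — and the numerical slack $n\geq 8$ is what makes the degree count in that incompatibility go through. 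Verifying this is the substance of the proof; the remaining steps are the routine translations between Kolchin polynomials, tangent spaces, and forking already packaged in Lemmas \ref{kolpoly}, \ref{morseq} and Theorem \ref{bdlinnonlin}.
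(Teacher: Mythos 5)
Your setup coincides with the paper's: reduce via Theorem \ref{bdlinnonlin} (with $h=2$, so $\nmdeg(p)+1\le 4$) to showing that the differential tangent space of $V^{[4]}$ at $\bar a$ --- a system of three second-order linear equations whose coefficients are $\beta_j=\sum_i i\alpha_i a_j^{i-1}$ --- has no proper infinite-rank definable subspace over $\mathbb{Q}\langle\bar\alpha,\bar a\rangle$. But at exactly the point you yourself call ``the substance of the proof'' you stop, and the sketch you offer in its place does not work. First, the actual role of $n\ge 8$ is not a ``degree count'' blocking transcendence relations among $a_1,\dots,a_4,\alpha_0$: it is what lets one prove that $\beta_1,\dots,\beta_4$ are themselves \emph{independent differential transcendentals}. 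Since adjoining $a_1,\dots,a_4$ can drop differential transcendence degree by at most $4$, $n\ge 8$ leaves (at least) four of the $\alpha_i$ independent and differentially transcendental over the remaining $\alpha_i$'s together with $\bar a$; and because $a_1,\dots,a_4$ are algebraically independent, every $4\times 4$ minor of the matrix $\bigl(i\,a_j^{\,i-1}\bigr)$ is invertible, so the $\beta_j$'s are interalgebraic with those four $\alpha_i$'s and hence independent differential transcendentals. Your proposal never isolates this step, and your guess about where $n\ge 8$ enters misses it.

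Second, your reason for why the linearized system has no proper infinite-rank subspace is circular: you argue such a subspace ``would force trdeg relations \dots incompatible with $a_1$ being a generic solution of $f(x)=\alpha_0$ over the others,'' but in the contradiction scenario $a_1$ is precisely assumed \emph{not} generic over $a_2,\dots,a_4$ (it satisfies an equation of order $\le 1$ over them), so no contradiction can be extracted at that level. What the paper actually does (Lemma \ref{prop:example}) is a genuine computation: a long chain of explicit linear substitutions eliminating the variables one at a time, yielding a definable bijection between the system and $\mathbb{A}^1(\mathcal{U})$, together with a verification --- via elimination rankings with $\beta_3$, then $\beta_4$, ranked highest --- that every denominator occurring ($\beta_2-\beta_1$, $A_{2,0}$, $B_{1,1}$, $A_{1,1}$, $B_{0,2}$, $C_{2,2}$, $F_{1,1}$, $C_{1,3}$, $F_{0,2}$) is nonzero; this is where the differential transcendence of the $\beta_j$'s is consumed. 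That check is not a formality: the paper's closing example on the $j$-function equation shows that for non-generic coefficients one of these denominators vanishes identically, so ``maximal underdeterminedness'' cannot be waved through from genericity. As written, your proposal reproduces the paper's reduction but leaves both substantive ingredients --- the differential transcendence of the $\beta_j$'s (the true use of $n\ge 8$) and the elimination-with-nonvanishing argument --- unproved, so there is a genuine gap.
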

\begin{proof}
By Theorem \ref{boundMorley}, we know that the degree of nonminimality of the generic type of $X_\a$ is at most 3, so it suffices to we must verify that the hypotheses of Theorem \ref{bdlinnonlin} for $d=4$.
Thus, we to show that given any solutions $a_1, a_2, a_3 ,a_4$ to equation \ref{pfcon} that are algebraically independent over $\m Q \langle \alpha , \alpha_1 , \ldots , \alpha_n \rangle $,\footnote{That is, $a_1, a_2, a_3, a_4$ satisfy no polynomial relation over $\m Q \langle \alpha , \alpha_1 , \ldots , \alpha_n \rangle $.} we cannot have that the transcendence degree of $\m Q \langle a_1, \alpha,  \alpha_1, \ldots ,  \alpha_n, a_2, \ldots , a_4 \rangle $ over  $\m Q \langle \alpha,  \alpha_1, \ldots ,  \alpha_n, a_2, \ldots , a_4 \rangle $ is one. 

Observe that the differential tangent space $T_{\bar{a}}^\Delta \left(X^{[4]}\right)$ after eliminating $y$ is given by the system:
$$
\left\{
\begin{array}{lcl}
u_0'' + \left( \sum _{i=1}^n i a_1^{i-1} \alpha_i \right) u_0  & = &  v_0'' + \left( \sum _{i=1}^n i a_2^{i-1} \alpha_i \right) v_0\\
u_0'' + \left( \sum _{i=1}^n i a_1^{i-1} \alpha_i \right) u_0  & = &  w_0'' + \left( \sum _{i=1}^n i a_3^{i-1} \alpha_i \right) w_0\\
u_0'' + \left( \sum _{i=1}^n i a_1^{i-1} \alpha_i \right) u_0  & = &  z_0'' + \left( \sum _{i=1}^n i a_4^{i-1} \alpha_i \right) z_0\\
\end{array}
\right.
$$
For $j=1, \ldots , 4$, we let $\beta _j=  \sum _{i=0}^n i a_j^{i-1} \alpha_i$. We argue that $\beta_1, \ldots , \beta_4$ are independent differential transcendentals. Note that 
$$\left( \begin{matrix}
n a_1^{n-1} & (n-1)a_1^{n-2} & \ldots & 2a_1 & 1\\
n a_2^{n-1} & (n-1)a_2^{n-2} & \ldots & 2a_2 & 1\\
n a_3^{n-1} & (n-1)a_3^{n-2} & \ldots & 2a_3 & 1\\
n a_4^{n-1} & (n-1)a_4^{n-2} & \ldots & 2a_4 & 1\\
\end{matrix}\right) \left( \begin{matrix}
\alpha_n \\
\alpha_{n-1}\\
\vdots \\
\alpha_1
\end{matrix}\right) = \left( \begin{matrix} \beta_1 \\ \beta_2 \\ \beta_3 \\ \beta_4  \end{matrix} \right)$$

We claim that any four columns of the matrix of $a_i$'s are linearly independent. To see this, note that if not then the vanishing of the corresponding determinant shows that there is a nontrivial polynomial relation which holds of $a_1, \ldots , a_4.$ 
This contradicts the fact that $a_1, \ldots a_4$ are algebraically independent over $\QQQ$. By the independence of $\alpha_1, \ldots , \alpha_n$, and since $n\geq 8$, there are at least four of the $\alpha_i$ which are independent differential transcendentals over the other $\alpha_i$ and $a_1, \ldots , a_4.$ Without loss of generality, assume $\alpha_1, \ldots , \alpha_4$ are independent differential transcendentals over $\m Q \langle \alpha_5, \ldots , \alpha _n , a_1, \ldots , a_4 \rangle $. Then since the last four columns of the above matrix of $a_i$ are linearly independent, it follows that $\alpha_1, \ldots , \alpha_4$ are  interalgebraic with $\beta_1, \ldots , \beta_4 $ over $\m Q \langle a_1, \ldots , a_4, \alpha_5, \ldots , \alpha_n \rangle.$ It follows that $\beta_1, \ldots , \beta_4$ are independent differential transcendentals over $\m Q \langle a_1, \ldots , a_4, \alpha_5, \ldots , \alpha_n \rangle$.

\begin{lemma}
\label{prop:example}
A linear system of the form 
\begin{equation}\label{exorig}
\left\{
\begin{array}{lcl}
u_0'' + \beta_1 u_0  & = &  v_0'' + \beta_2 v_0\\
u_0'' + \beta_1 u_0  & = &  w_0'' + \beta_3 w_0\\
u_0'' + \beta_1 u_0  & = &  z_0'' + \beta_4 z_0\\
\end{array}
\right.
\end{equation}
with $\beta_1, \ldots , \beta_4$ independent differential transcendentals has no proper infinite rank $\CC$-vector subspaces definable over $\UU$.
\end{lemma}

\begin{proof}
We will prove that this system has no infinite rank subvarieties by proving that the solution set is in definable bijection with $\AAA^1(\UU)$. This is constructed by composing a series of linear substitutions.

First, we substitute $u_1$ for $u_0$ where $u_0=u_1+v_0$.
This reduces the order of $v_0$ in the top equation, resulting in the system
$$
\left\{
\begin{array}{lcl}
u_1'' + \beta_1 u_1  & = &   (\beta_2-\beta_1) v_0\\
u_1'' + \beta_1 u_1 +v_0'' +\beta_1 v_0 & = &  w_0'' + \beta_3 w_0\\
u_1'' + \beta_1 u_1 +v_0'' +\beta_1 v_0  & = &  z_0'' + \beta_4 z_0\\
\end{array}
\right.
$$
To reduce the order $v_0$ in the lower equations we substitute $w_1,z_1$ for $w_0,z_0$ where $w_0=w_1+v_0,z_0=z_1+v_0$.
Then we have
$$
\left\{
\begin{array}{lcl}
u_1'' + \beta_1 u_1  & = &   (\beta_2-\beta_1) v_0\\
u_1'' + \beta_1 u_1 +(\beta_1-\b_3) v_0 & = &  w_1'' + \beta_3 w_1\\
u_1'' + \beta_1 u_1  +(\beta_1-\b_4) v_0  & = &  z_1'' + \beta_4 z_1\\
\end{array}
\right.
$$
Solving the top equation for $v_0$ in terms of $u_1$ and plugging this in for $v_0$ allows us to eliminate $v_0$ from lower equations, resulting in the system
$$
\left\{
\begin{array}{lcl}
A_{2,0}u_1'' + A_{0,0} u_1 & = &  w_1'' + \beta_3 w_1\\
C_{2,0}u_1'' + C_{0,0} u_1 & = &  z_1'' + \beta_4 z_1
\end{array}
\right.
$$
where (after some simplification)
\begin{align*}
A_{2,0}&:=\frac{\b_2-\b_3}{\b_2-\b_1}, \quad A_{0,0}:=\b_1 A_{2,0}\\
C_{2,0}&:=\frac{\b_2-\b_4}{\b_2-\b_1}, \quad C_{0,0}:=\b_1 C_{2,0}.
\end{align*}
We again reduce the order of the variable in the top equation by substituting $u_2$ for $u_1$ defined by $u_1=u_2+\frac{1}{A_{2,0}}w_1$ resulting in the system
$$
\left\{
\begin{array}{lcl}
A_{2,0}u_2'' + A_{0,0} u_2 & = &  B_{1,1}w_1'+B_{0,1} w_1\\
C_{2,0}u_2'' + C_{0,0} u_2 + D_{2,1} w_1''+ D_{1,1}w_1'+D_{0,1}w_1& = &  z_1'' + \beta_4 z_1
\end{array}
\right.
$$
where
$$
\begin{array}{lll}
 & B_{1,1}:=-2 A_{2,0}\left(\frac{1}{A_{2,0}}\right)', & \ B_{0,1}:=\b_3 - A_{2,0}\left(\frac{1}{A_{2,0}}\right)'' -\frac{A_{0,0}}{A_{2,0}}\\
D_{2,1}:=\frac{C_{2,0}}{A_{2,0}}, & D_{1,1}:=2C_{2,0}\left(\frac{1}{A_{2,0}}\right)', & D_{0,1}:=C_{2,0}\left(\frac{1}{A_{2,0}}\right)''+\frac{C_{0,0}}{A_{2,0}}.
\end{array}
$$
We next reduce the order of $w_1$ in lower equations with the substitution $z_2$ for $z_1$ defined by $z_1=z_2+D_{2,1}w_1$.
Now we have the system 
$$
\left\{
\begin{array}{lcl}
A_{2,0}u_2'' + A_{0,0} u_2 & = &  B_{1,1}w_1'+B_{0,1} w_1\\
C_{2,0}u_2'' + C_{0,0} u_2 + E_{1,1}w_1'+E_{0,1}w_1& = &  z_2'' + \beta_4 z_2
\end{array}
\right.
$$
where
$$
E_{1,1}:=D_{1,1}-D_{2,1}', \quad E_{0,1}:=D_{0,1} - D_{2,1}'' -\b_4 D_{2,1}.
$$
Next we reduce the order of $u_2$ in the top equation by substituting $w_2$ for $w_1$ with $w_1=w_2 + \frac{A_{2,0}}{B_{1,1}}u_2'$ resulting in the system
$$
\left\{
\begin{array}{lcl}
A_{1,1}u_2' + A_{0,1} u_2 & = &  B_{1,1}w_2'+B_{0,1} w_2\\
C_{2,1}u_2'' +C_{1,1}u_2'+ C_{0,1} u_2 + E_{1,1}w_2'+E_{0,1}w_2& = &  z_2'' + \beta_4 z_2\\
\end{array}
\right.
$$
where
\begin{align*}
A_{1,1}&:=-B_{1,1}\left(\frac{A_{2,0}}{B_{1,1}}\right)'-B_{0,1}\frac{A_{2,0}}{B_{1,1}}, \quad A_{0,1}:= A_{0,0}\\
C_{2,1}&:=C_{2,0}+E_{1,1}\frac{A_{2,0}}{B_{1,1}}, \ C_{1,1}:=E_{1,1}\left(\frac{A_{2,0}}{B_{1,1}}\right)'+E_{0,1}\frac{A_{2,0}}{B_{1,1}}, \quad C_{0,1}:=C_{0,0}.
\end{align*}
The reduction in order of the top equation continues with the replacement of $u_2$ with $u_3$ given by $u_2=u_3+\frac{B_{1,1}}{A_{1,1}}w_2$.
This results in the system
$$
\left\{
\begin{array}{lcl}
A_{1,1}u_3' + A_{0,1} u_3 & = &  B_{0,2} w_2\\
C_{2,1}u_3'' +C_{1,1}u_3'+ C_{0,1} u_3 + D_{2,2}w_2''+ D_{1,2}w_2'+D_{0,2}w_2& = &  z_2'' + \beta_4 z_2
\end{array}
\right.
$$
where
\begin{align*}
B_{0,2}&:=B_{0,1}- A_{1,1}\left(\frac{B_{1,1}}{A_{1,1}}\right)'-A_{0,1}\frac{B_{1,1}}{A_{1,1}} \\
D_{2,2}&:= C_{2,1}\frac{B_{1,1}}{A_{1,1}}, \quad D_{1,2}:=E_{1,1}+2 C_{2,1}\left(\frac{B_{1,1}}{A_{1,1}}\right)'+C_{1,1}\frac{B_{1,1}}{A_{1,1}},\\
D_{0,2}&:=E_{0,1}+ C_{2,1}\left(\frac{B_{1,1}}{A_{1,1}}\right)'' +C_{1,1}\left(\frac{B_{1,1}}{A_{1,1}}\right)' +C_{0,1}\frac{B_{1,1}}{A_{1,1}}.
\end{align*}
Next we replace $z_2$ with $z_3$ given by $z_2=z_3 + D_{2,2}w_2$ to arrive at
$$
\left\{
\begin{array}{lcl}
A_{1,1}u_3' + A_{0,1} u_3 & = &  B_{0,2} w_2\\
C_{2,1}u_3'' +C_{1,1}u_3'+ C_{0,1} u_3 +  E_{1,2}w_2'+E_{0,2}w_2& = &  z_3'' + \beta_4 z_3
\end{array}
\right.
$$
where
$$
E_{1,2}:=D_{1,2}-D_{2,2}', \quad E_{0,2}:=D_{0,2} - D_{2,2}'' -\b_4 D_{2,2}.
$$
Now we can solve the top equation for $w_2$ in terms of $u_3$ and plug the resulting expression into the lower equations:
$$
C_{2,2}u_3'' +C_{1,2}u_3'+ C_{0,2} u_3  =  z_3'' + \beta_4 z_3
$$
where
$$
\begin{array}{l}
C_{2,2}:= C_{2,1}+E_{1,2}\frac{A_{1,1}}{B_{0,2}}, \\
C_{1,2}:= C_{1,1}+E_{1,2}\left(\frac{A_{1,1}}{B_{0,2}}\right)' + E_{0,2}\frac{A_{1,1}}{B_{0,2}} + E_{1,2}\frac{A_{0,2}}{B_{0,2}},\\
C_{0,2}:=C_{0,1}+E_{1,2}\left(\frac{A_{0,1}}{B_{0,2}}\right)'+E_{0,2}\frac{A_{0,1}}{B_{0,2}}.
\end{array}
$$
Next we perform analagous substitutions to eliminate $z_3$ from the top equation, beginning with substituting $u_4$ for $u_3$ defined by $u_3=u_4+\frac{1}{C_{2,2}}z_3$, so we have
$$
\left\{
\begin{array}{lcl}
C_{2,2}u_4'' +C_{1,2}u_4'+ C_{0,2} u_4 & = &  F_{1,1}z_3' + F_{0,1} z_3\\
\end{array}
\right.
$$
where
$$
F_{1,1}:=-2C_{2,2}\left(\frac{1}{C_{2,2}}\right)'-\frac{C_{1,2}}{C_{2,2}}, \quad F_{0,1}:=\b_4-C_{2,2}\left(\frac{1}{C_{2,2}}\right)''-C_{1,2}\left(\frac{1}{C_{2,2}}\right)'-\frac{C_{0,2}}{C_{2,2}}.
$$
Next, substitute $z_4$ for $z_3$ where $z_3=z_4+\frac{C_{2,2}}{F_{1,1}}u_4'$.
This will result in the equation 
$$
C_{1,3}u_4'+ C_{0,3} u_4 =  F_{1,1}z_4' + F_{0,1} z_4
$$
where
$$
C_{1,3}:=C_{1,2}-F_{1,1}\left(\frac{C_{2,2}}{F_{1,1}}\right)'+F_{0,1}\frac{C_{2,2}}{F_{1,1}}, \quad C_{0,3}:=C_{0,2}.
$$
Replace $u_4$ with $u_5$ defined by $u_4=u_5+\frac{F_{1,1}}{C_{1,3}}z_4$, giving us the equation
\begin{equation}\label{exfinal}
C_{1,3}u_5'+ C_{0,3} u_5 =  F_{0,2} z_4
\end{equation}
where
$$
F_{0,2}:=F_{0,1}-C_{1,3}\left(\frac{F_{1,1}}{C_{1,3}}\right)'+F_{0,1}\frac{F_{1,1}}{C_{1,3}}.
$$

Any solution to Equation \ref{exfinal} is determined by the value of $u_5$, so the solution set is in definable bijection with $\AAA^1(\UU)$.
Each of these linear substitutions gives rise to a definable bijection between systems so long as the substitutions are well-defined, i.e., that the denominators of the coefficients are all non-zero. For this procedure to give a definable bijection from the original system \ref{exorig} to $\AAA^1(\UU)$, we must verify that the following expressions are not zero:
$$\b_2-\b_1, A_{2,0}, B_{1,1}, A_{1,1}, B_{0,2}, C_{2,2}, F_{1,1}, C_{1,3}, \text{ and }F_{0,2}.$$
The expression $\b_2-\b_1$ is non-zero because $\b_1,\b_2,\b_3,\b_4$ are all distinct.
Each of these coefficients can be considered as a differential rational function in terms of $\b_1,\b_2, \b_3,\b_4$, and so they can be analyzed according to a ranking on $\bar{\b}$. We will show that these coefficients are nonzero by showing that the initials of each are nonzero in some elimination ranking. It then follows that the coefficients themselves are non-zero because $\b_1, \b_2, \b_3, \b_4$ are independent differential transcendentals.

Consider the terms of these expressions ordered by some elimination ranking on $\bar{\b}$ with $\b_3$ ranked highest.
The leading term in this ranking of each expression can be calculated using the definitions of previous coefficients. The following table shows that these leading terms are non-zero:
$$
\begin{array}{c|l}
A_{2,0} & \frac{-1}{\b_2-\b_1}\b_3 \\
B_{1,1} & \frac{-2}{\b_2-\b_3}\b_3' \\
A_{1,1} & \frac{-2}{(\b_2-\b_1)B_{1,1}}\b_3'' \\
B_{0,2} & \frac{-2}{(\b_2-\b_1)A_{1,1}}\b_3^{(3)}
\end{array}
$$
We turn our attention to an elimination ranking with $\b_4$ ranked highest to prove that the remaining coefficients are nonzero. The following table shows that the leading terms of these coefficients are also non-zero:
$$
\begin{array}{c|l}
C_{2,2} & \frac{-3}{(\b_2-\b_1)B_{0,2}}\b_4''\\
F_{1,1} & \frac{-4}{(\b_2-\b_1)B_{0,2}C_{2,2}}\b_4^{(3)}\\
C_{1,3} & \frac{-7}{(\b_2-\b_1)B_{0,2}F_{1,1}}\b_4^{(4)} \\ F_{0,2} & \frac{-7}{(\b_2-\b_1)B_{0,2}C_{1,3}}\b_4^{(5)}
\end{array}
$$

We have shown that each substitution is well-defined, and therefore we have constructed a definable bijection between system \ref{exorig} and $\AAA^1(\UU)$. Since $\AAA^1(\UU)$ has no infinite rank subspaces, neither does our original system, completing the proof of the proposition.
\end{proof}
We now finish the proof of Theorem \ref{thm:firstexample}.
Since the differential tangent space $T_{\bar{a}}^\Delta \left( X^{[4]}\right)$ satisfies the conditions of Lemma \ref{prop:example}, it has no infinite rank subspaces over $\QQQ\langle \bar{\a},\bar{a}\rangle$. Therefore, $X_\a$ is strongly minimal by Theorem \ref{bdlinnonlin}.
\end{proof}

\subsection{Generic higher order equations} 
The technique used in the previous example can be applied to more general classes of equations. In this section, we use analogous techniques to show that generic equations with high enough degree have differential tangent spaces cut out by linear equations with generic coefficients and that these tangent spaces have no proper infinite rank $\CC$-vector subspace definable over $\UU$. 

Let $$f(x)=\alpha + \sum_{i=1}^d \alpha_{0,i} x^i + \sum_{j\in M_1} \a_{1,j} m_j\left(x,x'\right) + \cdots + \sum _{k\in M_h} \a_{h,k} m_k \left(x,x',\ldots , x^{(h)}\right)$$ where $M_n$ indexes the set of all order $n$ monomials of degree at most $d$ and the entire collection of coefficients $\alpha, \alpha_{i,j}$ are independent differential transcendentals over $\m Q$. Let $V_\a$ be the zero set of $f(x)$ and let $m$ be the degree of nonmininality of $f$ plus one. Following the notation of Section \ref{Lintrans}, we let $V^{[m]}$ be the following system of equations in $x_1, \ldots , x_m , y$: 
\[
\left\{
\begin{array}{lcl}
\sum_{i=0}^h \sum_{j\in M_i}\alpha_{i,j} m_j\left(x_1, \ldots, x_1^{(i)}\right)  & = & y \\
 \sum_{i=0}^h \sum_{j\in M_i}\alpha_{i,j} m_j\left(x_2, \ldots, x_2^{(i)}\right)   & = & y    \\
  & \vdots &  \\
 \sum_{i=0}^h \sum_{j\in M_i}\alpha_{i,j} m_j\left(x_m, \ldots, x_m^{(i)}\right)   & = & y 
\end{array}
\right.
\]

Let $\bar a = (a_1, \ldots , a_m)$ be an indiscernible sequence in $V_\a$ such that $a_m$ forks over $a_1, \ldots , a_{m-1}$ and $tp(a_m/ \m Q \langle \alpha, \alpha_{i,j} , a_1, \ldots , a_{m-1} \rangle _{i=0, \ldots, h, j\in M_i } $ has rank between $1$ and $h-1$. That is, $a_m $ satisfies a differential equation of order at least $1$ but no more than $h-1$ over $\m Q \langle \alpha, \alpha_{i,j} , a_1, \ldots , a_{m-1} \rangle _{i=0, \ldots, h, j\in M_i } $. Crucially for this proof, we note that $a_1, \ldots , a_m$ are \emph{algebraically} independent over $\m Q \langle \alpha, \alpha_{i,j} \rangle $. 

Let $T^\D_{(\bar{a}, \alpha)}\left( V^{[m]} \right)$ denote the differential tangent space of $V^{[m]}$ over $(\bar a, \alpha)$. Then $T^\D_{(\bar{a}, \alpha)}\left( V^{[m]} \right)$ is given by 

\[
\left\{
\begin{array}{lcl}
\sum_{i=0}^h \b_{i,1} z_1^{(i)}  & = & y \\
\sum_{i=0}^h \b_{i,2} z_2^{(i)}  & = & y \\
  & \vdots &  \\
\sum_{i=0}^h \b_{i,m} z_m^{(i)}  & = & y
\end{array}
\right.
\] 
where $\b_{i,j}=\frac{\partial f}{\partial x^{(i)}}(a_j)$ as used in previous sections.
\begin{lemma}\label{lem:higherordergeneric} Let $V$ be a differential variety cut out by a generic differential polynomial of order $h$, degree $d$ and nonminimality degree no more than $m-1$. If $\bar a = (a_1, \ldots , a_m)$ are realizations of the generic type of $V_0$ such that $a_1,\ldots, a_m$ are algebraically independent over $\QQQ\langle \bar{\a}\rangle$, then the variety $T^\D_{(\bar{a}, \alpha)}\left( V^{[m]} \right)$ has coefficients which are independent differential transcendentals over $\m Q$ whenever $d \geq 2m.$ 
\end{lemma}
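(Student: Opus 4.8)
The plan is to show that the only nontrivial coefficients of the linear system $T^\D_{(\bar a,\alpha)}\left(V^{[m]}\right)$, namely the $(h+1)m$ elements $\beta_{i,j}=\frac{\partial f}{\partial x^{(i)}}(a_j)$ for $0\le i\le h$, $1\le j\le m$, are independent differential transcendentals over $\m Q$. Write $A$ for the finite set of coefficients of $f$ and put $N=|A|$. Each $a_j$ satisfies the order $h$ equation $f(a_j)=0$ over $\m Q\langle A\rangle$, hence is differentially algebraic over $\m Q\langle A\rangle$, and therefore $\m Q\langle A,\bar a\rangle$ has differential transcendence degree $N$ over $\m Q$. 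Every $\beta_{i,j}$ is a $\m Q\langle\bar a\rangle$-linear form in the coefficients $A$, namely $\beta_{i,j}=\sum_{(n,k)}\alpha_{n,k}\,\frac{\partial m_k}{\partial x^{(i)}}(a_j)$; so is $\gamma_j:=f(a_j)$, which moreover equals $0$. The central step will be to exhibit a set $\tilde A\subseteq A$ of $(h+2)m$ distinguished coefficients for which the $(h+2)m\times(h+2)m$ matrix recording the $\tilde A$-coordinates of the linear forms $\{\beta_{i,j}\}\cup\{\gamma_j\}$ is nonsingular over $\m Q\langle\bar a\rangle$. Given this, Cramer's rule lets one solve for the distinguished coefficients, so $\tilde A\subseteq \m Q\langle (A\setminus\tilde A),\bar a,\{\beta_{i,j}\},\{\gamma_j\}\rangle=\m Q\langle (A\setminus\tilde A),\bar a,\{\beta_{i,j}\}\rangle$ because each $\gamma_j$ vanishes. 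Thus $(A\setminus\tilde A)\cup\bar a\cup\{\beta_{i,j}\}$ differentially generates $\m Q\langle A,\bar a\rangle$, and this set has at most $(N-(h+2)m)+m+(h+1)m=N$ elements; since $\m Q\langle A,\bar a\rangle$ has differential transcendence degree $N$, the set must be a differential transcendence basis, so in particular its subtuple $\{\beta_{i,j}\}$ consists of $(h+1)m$ independent differential transcendentals over $\m Q$.

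Producing $\tilde A$ and proving the matrix nonsingular is where $d\geq 2m$ and the algebraic independence of $a_1,\dots,a_m$ over $\m Q\langle A\rangle$ enter. Order the rows as $\gamma_1,\dots,\gamma_m$, then $\beta_{0,1},\dots,\beta_{0,m}$, then the blocks $\beta_{i,1},\dots,\beta_{i,m}$ for $i=1,\dots,h$. For the first $2m$ rows, take the columns indexed by the monomials $x,x^2,\dots,x^{2m}$, which genuinely occur in $f$ precisely because $d\geq 2m$; as $\frac{\partial}{\partial x^{(i)}}(x^{\ell})=0$ for $i\geq 1$, these columns are supported on the first $2m$ rows, and after regrouping the rows by the index $j$ the resulting $2m\times 2m$ block is the confluent (Hermite) Vandermonde matrix of $t,t^2,\dots,t^{2m}$ at the nodes $a_1,\dots,a_m$ taken with multiplicity two, with determinant $\pm\prod_j a_j^{2}\prod_{j<j'}(a_{j'}-a_j)^{4}$, nonzero since algebraic independence makes the $a_j$ pairwise distinct and nonzero. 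For each $i\in\{1,\dots,h\}$, take the columns indexed by $x^{(i)},x^{(i)}x,\dots,x^{(i)}x^{m-1}$ (all of degree $\leq m\leq d$); since $\frac{\partial}{\partial x^{(i')}}(x^{(i)}x^{e})=0$ whenever $i'\neq i$ and $i'\geq 1$, these columns meet only the first $2m$ rows and the $\beta_{i,\cdot}$-rows, and on the latter they form the ordinary Vandermonde matrix $\left(a_j^{e}\right)_{1\le j\le m,\ 0\le e\le m-1}$. With the block order $\{\gamma,\beta_0\},\beta_1,\dots,\beta_h$ on rows and columns alike, the matrix is then block upper triangular with nonsingular diagonal blocks, hence nonsingular; note also that the $(h+2)m$ chosen monomials are pairwise distinct, so $\tilde A$ genuinely has $(h+2)m$ elements.

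The main obstacle is exactly this combinatorial design: the distinguished monomials must be actual monomials of $f$ of degree at most $d$ — which forces $d\geq 2m$, since the $\{\gamma,\beta_0\}$ block alone needs $2m$ powers of $x$ — they must turn the matrix into a triangular one, which rests entirely on the vanishing pattern of the partials $\frac{\partial m_k}{\partial x^{(i)}}$, and every diagonal block must be nonsingular, which is the sole use of the hypothesis on $\bar a$ here; only the algebraic independence of $a_1,\dots,a_m$ is needed at this point, the conditions $1<k<h$ being relevant to the surrounding argument (the analysis of the associated linear system) rather than to the nonsingularity claim. A more routine but essential point is the counting: it is precisely the inclusion of the vanishing forms $\gamma_j$, rather than the $\beta_{i,j}$ alone, that brings the generating set $(A\setminus\tilde A)\cup\bar a\cup\{\beta_{i,j}\}$ down to size $N$ and thereby makes it — and hence $\{\beta_{i,j}\}$ — differentially independent over $\m Q$.
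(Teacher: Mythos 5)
Your proof is correct, and it reorganizes the argument along a genuinely different route from the paper's. The paper proceeds by induction on the order $i$ of the coefficient block $\beta_{i,1},\dots,\beta_{i,m}$: at order $0$, and again at each order $n+1$ using the monomials $x^{(n+1)}x^k$ with $k<d$, it uses $d\ge 2m$ in a pigeonhole fashion to select $m$ coefficients of $f$ that remain independent differential transcendentals over all the other coefficients together with $\bar a$, and then inverts an $m\times m$ (generalized) Vandermonde matrix to show that the block $\beta_{i,\cdot}$ is interdefinable with the selected coefficients, concluding that each block is independent over the higher-order blocks and stitching the blocks together. You instead run a single global linear-algebra argument: you adjoin the vanishing forms $\gamma_j=f(a_j)=0$ as extra rows, exhibit one $(h+2)m\times(h+2)m$ block upper triangular matrix -- a $2m\times 2m$ confluent Vandermonde block for the $\{\gamma,\beta_0\}$ rows on the pure powers $x,\dots,x^{2m}$, and ordinary Vandermonde blocks $\left(a_j^{e}\right)$ for each order $i\ge 1$ on the columns $x^{(i)}x^{e}$, $e<m$ -- and finish with one differential-transcendence-degree count forcing $(A\setminus\tilde A)\cup\bar a\cup\{\beta_{i,j}\}$ to be a differential transcendence basis of $\mathbb{Q}\langle A,\bar a\rangle$. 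Both arguments rest on the same two ingredients, namely the algebraic independence of $a_1,\dots,a_m$ (which is all that is needed for the Vandermonde determinants to be nonzero, so you are right that the hypothesis $1<k<h$ plays no role here) and $d\ge 2m$ (which supplies enough pure-power monomials); but your version makes the role of the relations $f(a_j)=0$ explicit -- they are precisely why $2m$ rather than $m$ pure powers are needed -- avoids the inductive bookkeeping and the ``there exist $j_1,\dots,j_m$'' selection step that the paper leaves largely implicit, and delivers in one stroke the slightly stronger statement that the $\beta_{i,j}$ are independent differential transcendentals over $\mathbb{Q}\langle\bar a\rangle$, which is what the subsequent arguments actually use. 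The paper's induction, in exchange, keeps every step to a small $m\times m$ matrix and records the finer information that each order block is independent over the higher-order blocks. (The exact determinant formula for your confluent block is inessential: nonsingularity already follows from the Hermite-interpolation argument, since a polynomial $\sum_{\ell=1}^{2m}c_\ell t^\ell$ with double roots at the distinct nonzero $a_j$ would be divisible by $t\prod_j(t-a_j)^2$ of degree $2m+1$.)
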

\begin{proof}

We proceed by induction on the order, beginning with $\b_{0,1}, \b_{0,2}, \ldots, \b_{0,m}$. Since $d \geq 2m, $ there are $j_1, \ldots , j_m$ such that $\alpha_{0,j_1}, \ldots , \alpha_{0,j_m}$ are independent differential transcendentals over $\m Q \left\langle \mathcal{A}_0 a_1 \cdots  a_{m} \right\rangle $ where $\mathcal{A}_0=\{\a_{i,j}:0\leq i \leq h, j\in M_i\}\setminus \{\a_{0,j_1}, \ldots, \a_{0,j_m}\}$. Note that 

$$\left( \begin{matrix}
j_1 a_{1}^{j_1-1} & j_2 a_{1}^{j_2-1} & \ldots & j_m a_{1} ^{j_m -1}\\
j_1 a_{2}^{j_1-1} & j_2 a_{2}^{j_2-1} & \ldots & j_m a_{2} ^{j_m -1}\\
\vdots & & & \vdots \\
j_1 a_{m}^{j_1-1} & j_2 a_{m}^{j_2-1} & \ldots & j_m a_{m} ^{j_m -1}
\end{matrix}\right) \left( \begin{matrix}
\alpha_{0,j_1} \\
\alpha_{0,j_2}\\
\vdots \\
\alpha_{0,j_m}
\end{matrix}\right) = \left( \begin{matrix} \b_{0,1} \\ \b_{0,2} \\ \vdots \\ \b_{0,m} \end{matrix} \right) + \left( \begin{matrix} l_1 \\ l_2 \\ \vdots \\ l_m \end{matrix} \right),$$ 

where $l_i \in \m Q \left\langle \mathcal{A}_0 a_1 \cdots  a_{m} \right\rangle  $.

The above matrix is invertible, as the vanishing of its determinant imposes a nontrivial algebraic relation among $a_1, \ldots , a_m$, which are, by assumption, algebraically independent. It follows that $\b_{0,1}, \ldots, \b_{0,m}$ are interdefinable with $\alpha_{0,j_1}, \ldots , \alpha_{0,j_m}$ over $\m Q \left\langle \mathcal{A}_0 a_1 \cdots  a_{m} \right\rangle .$ Thus, $\b_{0,1}, \ldots, \b_{0,m}$ are independent and differentially transcendental over $\m Q \left\langle \mathcal{A}_0 a_1 \cdots  a_{m} \right\rangle .$ Note that the coefficients $\{\b_{i,j}:1\leq i \leq h, 1\leq j \leq m\} $ are contained in the field $\m Q \left\langle \mathcal{A}_0 a_1 \cdots  a_{m} \right\rangle ,$ so we've shown that $\b_{0,1}, \ldots, \b_{0,m}$ are independent differential transcendentals over $\m Q \langle \b_{i,j}:1\leq i \leq h, 1\leq j \leq m\rangle.$ 

Suppose we have already shown that $\b_{n,1}, \ldots, \b_{n,m}$ are independent differential transcendentals over $\m Q \langle \b_{i,j}:n+1\leq i \leq h, 1\leq j \leq m\rangle$ for some $n<h$.
Let $M^*_{n+1}$ index the collection of order $n+1$ monomials of order no more than $d$ excluding the monomials of the form $\{x^{(n+1)}x^r:0\leq r \leq d-1\}$.
Assume the order $n+1$ terms in $f(x)$ are ordered so that
$$\sum _{k\in M_{n+1}} \a_{n+1,k} m_k (x,x',\ldots , x^{(n+1)}) = \sum _{k=0}^{d-1} \alpha_{n+1,k} x^{(n+1)} x^k + \sum_{j \in M^*_{n+1}} \alpha_{n+1,j} m_j \left(x,x', \ldots, x^{(n+1)}\right). $$
Since $d \geq 2m, $ there are $k_1, \ldots , k_m <d$ such that $\alpha_{n+1,k_1}, \ldots , \alpha_{n+1,k_m}$ are independent and differentially transcendental over $\m Q \langle\mathcal{A}_{n+1} a_1 \cdots a_m \rangle$ where $\mathcal{A}_{n+1}=\{\alpha_{i,j}: n+1\leq i\leq h,j\in M_{i}\}\setminus \{\alpha_{n+1,k_1}, \ldots , \alpha_{n+1,k_m}\}$. Note that 

$$\left( \begin{matrix}
a_{1}^{k_1} & a_{1}^{k_2} & \ldots & a_{1} ^{k_m }\\
a_{2}^{k_1} & a_{2}^{k_2} & \ldots & a_{2} ^{k_m }\\
\vdots & & & \vdots \\
a_{m}^{k_1} & a_{m}^{k_2} & \ldots & a_{m} ^{k_m }
\end{matrix}\right) \left( \begin{matrix}
\alpha_{n+1,k_1} \\
\alpha_{n+1,k_2}\\
\vdots \\
\alpha_{n+1,k_m}
\end{matrix}\right) = \left( \begin{matrix} \b_{n+1,1} \\ \b_{n+1,2} \\ \vdots \\ \b_{n+1,m} \end{matrix} \right) + \left( \begin{matrix} r_1 \\ r_2 \\ \vdots \\ r_m \end{matrix} \right),$$ 
where $r_i \in \m Q \langle\mathcal{A}_{n+1} a_1 \cdots a_m \rangle$. The above matrix is invertible, since we are assuming that $a_1, \ldots , a_m$ are algebraically independent. Thus, $\alpha_{n+1,k_1}, \ldots , \alpha_{n+1,k_m}$ are interdefinable with $\beta_{n+1,1}, \ldots , \beta_{n+1,m}$ over  $\m Q \langle\mathcal{A}_{n+1} a_1 \cdots a_m \rangle$. Since $\{\beta_{i,j}:n+1<i\leq h, 1\leq j \leq m\}$ are contained in the field $\m Q \langle\mathcal{A}_{n+1} a_1 \cdots a_m \rangle$, it follows that $\beta_{n+1,1}, \ldots , \beta_{n+1,m}$ are independent and differentially transcendental over $\{\beta_{i,j}:n+1<i\leq h, 1\leq j \leq m\}$.

Putting together the above analysis, we have proved that the collection of coefficients $\{\beta_{i,j}:0\leq i \leq h, 1\leq j \leq m\}$ are independent and differentially transcendental over $\m Q \langle  a_1, \ldots, a_m \rangle$ and thus over $\m Q.$ 
\end{proof}


Eliminating the variable $y$ from $T^\D_{(\bar{a}, \alpha)}\left( V^{[m]} \right)$ results in a system of $m-1$ linear equations in $m$ variables as described in the following theorem.

\begin{thm}\label{thm:defnbijection} Consider a system of $m-1$ linear differential equations of order $h$ in variables $(z_1,\ldots, z_m)$ of the form: \begin{equation}\label{eq:initialsys}
\left\{
\begin{array}{lcl}
\sum_{i=0}^h \b_{i,1} z_1^{(i)}  & = & \sum_{i=0}^h \b_{i,2} z_2^{(i)} \\
\sum_{i=0}^h \b_{i,1} z_1^{(i)}  & = & \sum_{i=0}^h \b_{i,3} z_3^{(i)} \\
  & \vdots &  \\
\sum_{i=0}^h \b_{i,1} z_1^{(i)}  & = & \sum_{i=0}^h \b_{i,m} z_m^{(i)}.
\end{array}
\right.
\end{equation}
where the entire set of coefficients $\{\b_{i,j}:0\leq i \leq h, 1\leq j \leq m\}$ are independent differential transcendentals. Then the solution set of such a system has no 
proper infinite rank $\CC$-vector subspaces definable over $\UU$ for $h>1$ and $m>1$. 
\end{thm}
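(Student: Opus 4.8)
The plan is to exhibit a definable bijection, over the differential field $F$ generated over $\QQQ$ by the coefficients $\{\b_{i,j}\}$, between the solution set $S$ of \eqref{eq:initialsys} and $\AAA^1(\UU)$. Granting this, any proper Kolchin‑closed subset of $S$ corresponds to a proper Kolchin‑closed subset of $\AAA^1(\UU)$, which is the zero set of a nonzero differential polynomial in one variable and hence has finite Lascar rank; in particular $S$ has no proper infinite rank subspace. So everything reduces to constructing the bijection, generalizing the construction in Lemma \ref{prop:example}.

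Write $P_j := \sum_{i=0}^h \b_{i,j}\,\d^{i}$ for the order‑$h$ linear differential operator attached to $z_j$, so that the $j$th equation of \eqref{eq:initialsys} reads $P_1 z_1 = P_j z_j$; thus $z_1$ is ``central'', occurring in all $m-1$ equations, while each $z_j$ with $j\geq 2$ is a ``leaf'', occurring in exactly one equation. I would build the bijection by eliminating the leaves $z_m, z_{m-1}, \dots, z_2$ one at a time. Eliminating a single leaf yields a system of exactly the same shape in $z_1,\dots,z_{m-1}$, with $P_1$ replaced by a modified operator and the remaining $P_j$ unchanged, so after $m-2$ such steps we are left with a single equation $P z_1 = Q z_2$ in two variables, and one further reduction of the same kind disposes of this base case (as in the closing steps of Lemma \ref{prop:example}).

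The mechanism for each elimination is the order‑lowering substitution of Lemma \ref{prop:example}: given an equation $A\,\xi^{(k)} + (\text{order} < k \text{ in }\xi) = B\,\eta^{(\ell)} + (\text{order} < \ell \text{ in }\eta)$ with $A,B\neq 0$, if $k\geq \ell$ then the invertible linear substitution $\xi \mapsto \xi + \tfrac{B}{A}\,\eta^{(k-\ell)}$ kills the top‑order term in $\eta$, strictly lowering the order of $\eta$ in that equation while affecting only the equations in which $\xi$ occurs (symmetrically if $\ell\geq k$). Alternating such substitutions in the equation $P_1 z_1 = P_m z_m$ — shuffling the lower‑order $z_m$‑terms through the other equations and then reabsorbing them, just as $v_0$ is moved between equations in Lemma \ref{prop:example} — drives the order of $z_m$ in that equation down to $0$, at which point $z_m$ is a differential rational function of the current central variable and can be substituted out of the remaining equations. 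Composing all of these invertible substitutions down to one surviving variable gives a definable map $S\to\AAA^1(\UU)$: injective because each eliminated variable is recovered as a differential rational function of the surviving coordinate, and surjective because nothing constrains that coordinate.

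The real work — and the step I expect to be the main obstacle — is verifying that every substitution used is well‑defined, i.e. that all the denominators arising in the reduction (the leading coefficients $A,B$ of the operators at each stage, and the further differential rational combinations of the $\b_{i,j}$ they generate, in the spirit of the quantities $A_{2,0},B_{1,1},\dots,F_{0,2}$ of Lemma \ref{prop:example}) are nonzero. For each such denominator one chooses an elimination ranking on the $\b_{i,j}$ that ranks highest the coefficient ``freshly introduced'' at the relevant stage, computes the leading term of the denominator in that ranking, and checks that it is a nonzero rational multiple of a derivative of the top‑ranked coefficient; the denominator is then nonzero because the $\b_{i,j}$ are independent differential transcendentals. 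Relative to the order‑two example the difficulty is combinatorial rather than conceptual: one must organize the reduction so that it is transparent which coefficient is fresh at each stage and confirm that the integers produced by repeatedly differentiating these rational functions never vanish. I would phrase this as an induction on $m$, carrying along as inductive data the precise shape of the operators and the nonvanishing of their leading coefficients. Once every substitution is justified, the bijection $S\cong\AAA^1(\UU)$ is established and the theorem follows.
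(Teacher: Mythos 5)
Your overall strategy is the same as the paper's: build a definable bijection from the solution set to $\AAA^1(\UU)$ by composing order-lowering linear substitutions that eliminate the leaf variables $z_m,\dots,z_2$ one at a time (this is precisely the content of the paper's Algorithm A and Algorithm B, applied recursively in $m$), and then conclude, as you do, that no infinite rank subspace can exist. The genuine gap is the step you yourself flag as the main obstacle: proving that every denominator arising in every substitution, at every stage of the recursion, is nonzero. Your proposed method --- choose an elimination ranking with the ``freshly introduced'' coefficient ranked highest, compute the leading term of the denominator, and check it is a nonzero rational multiple of a derivative of that coefficient --- is exactly the method of the special example in Lemma \ref{prop:example}, and it does not transfer to the general case as stated. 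After the first leaf is eliminated, the coefficients of the modified central operator are no longer independent differential transcendentals but complicated differential rational functions of the original $\b_{i,j}$, so there is no longer an obvious ``fresh'' coefficient attached to the central equation, and the claim that the rational-integer leading coefficients produced by repeated differentiation never vanish is precisely the unverified content (in the example those integers were computed explicitly, e.g. $-1,-2,-3,-4,-7$). Saying one can ``organize the reduction so that it is transparent which coefficient is fresh'' restates the problem rather than solving it.

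The paper closes this gap by a structural rather than computational argument. Lemma \ref{lem:ABnew} shows that the top-equation coefficients before and after a trio of substitutions are interdefinable, so independence (hence nonvanishing of the denominators $a_\ell$, $\b_h$, $\tilde{b}_{\ell-1}$) is preserved through Algorithm A. Lemma \ref{lem:alga} shows, via the matrix identity \ref{eq:matrix} made lower triangular by Gaussian elimination and proved nonsingular by tracking the order in $a_\ell$ of the entries $M_{i,j}$, that the modified coefficients $(\tilde{c}_i)$ of the lower equation are inter-differentially algebraic with the $(c_i)$ over the remaining data. Lemma \ref{lem:algb} then packages this into an inductive hypothesis --- the central coefficients stay inter-differentially algebraic with the original ones over the untouched leaf coefficients --- which guarantees that the coefficients of the not-yet-eliminated equations remain independent differential transcendentals over everything created so far, so all later substitutions are well-defined and the recursion on $m$ goes through. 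To complete your proposal you need either this inter-differential-algebraicity bookkeeping or a genuinely worked-out generalization of the leading-term computation valid for all $h$ and $m$; as written, the crucial nonvanishing claims are asserted but not established.
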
 

As in Proposition \ref{prop:example}, we show that such a system has no infinite rank subspaces by constructing a definable bijection to $\AAA^1(\UU)$.
This is accomplished through two algorithms:
Algorithm A will apply a sequence of linear substitutions which will lower the order of the top equation by one, and Algorithm B will apply Algorithm A inductively to reduce the number of equations and variables by one.

\textbf{Algorithm A}:

We begin with a system of equations of the form
\[
\left\{
\begin{array}{lcl}
\sum_{i=0}^{\ell} a_i u^{(i)} & = & \sum_{i=0}^{\ell} b_i z^{(i)} \\
\sum_{i=0}^h c_i u^{(i)} + \sum_{i=0}^{h-1} e_i z^{(i)}  & = & \sum_{i=0}^h \b_i w^{(i)} \\
\end{array}
\right.
\]
with $1\leq \ell \leq h$. We lower the order of the top equation by applying a trio of substitutions.
The first substitution replaces the variable $u$ with $\tilde{u}$ where $u=\tilde{u} + \left(\frac{b_\ell}{a_\ell}\right)z$ which reduces the order of $z$ in the top equation by one.
This results in the new system:
\[
\left\{
\begin{array}{lcl}
\sum_{i=0}^{\ell} a_i \tilde{u}^{(i)} & = & \sum_{i=0}^{\ell-1} \tilde{b}_i z^{(i)} \\
\sum_{i=0}^h c_i \tilde{u}^{(i)} + \sum_{i=0}^{h} d_i z^{(i)}  & = & \sum_{i=0}^h \b_i w^{(i)} \\
\end{array}
\right.
\]
where
\begin{align*}
d_h&:= c_h\left(\frac{b_{\ell}}{a_{\ell}}\right),\\
d_{i}&:= e_{i}+\sum_{k=0}^{h-i}\binom{h}{h-i-k}c_{h-k}\left(\frac{b_{\ell}}{a_\ell}\right)^{(h-i-k)}
\end{align*}
for $0\leq i \leq h-1$, and
$$
 \tilde{b}_{i}:= b_{i}-\sum_{k=0}^{\ell-i}\binom{\ell}{\ell-i-k}a_{\ell-k}\left( \frac{b_{\ell}}{a_{\ell}}\right)^{(\ell-i-k)}
 $$
 for $0\leq i \leq \ell-1$.
 
 The next substitution replaces $v$ with $\tilde{v}$, defined by $v=\tilde{v}+\left(\frac{d_{h}}{\b_{h}}\right)z$, reducing the order of $z$ in the lower equation by one.
This results in the system of equations
\[
\left\{
\begin{array}{lcl}
\sum_{i=0}^{\ell} a_{i} \tilde{u}^{(i)} & = & \sum_{i=0}^{\ell-1} \tilde{b}_{i} z^{(i)} \\
\sum_{i=0}^h c_{i} \tilde{u}^{(i)} + \sum_{i=0}^{h-1} \tilde{e}_{i} z^{(i)}  & = & \sum_{i=0}^h \b_{i} \tilde{v}^{(i)} \\
\end{array}
\right.
\]
where
\[
\tilde{e}_{i}:= d_{i}- \sum_{k=0}^{h-i}\binom{h}{h-i-k}\b_{h-k}\left(\frac{d_{h}}{\b_{h}}\right)^{(h-i-k)}
\]
for $0\leq i \leq h-1$.

To complete the trio, we substitute $\tilde{z}$ for $z$ defined by $z=\tilde{z}+\left(\frac{a_{\ell}}{\tilde{b}_{\ell-1}}\right)\tilde{u}'$.
Now we have the system
\[
\left\{
\begin{array}{lcl}
\sum_{i=0}^{\ell-1} \tilde{a}_{i} \tilde{u}^{(i)} & = & \sum_{i=0}^{\ell-1} \tilde{b}_{i} \tilde{z}^{(i)} \\
\sum_{i=0}^h \tilde{c}_{i} \tilde{u}^{(i)} + \sum_{i=0}^{h-1} \tilde{e}_{i} \tilde{z}^{(i)}  & = & \sum_{i=0}^h \b_{i} \tilde{v}^{(i)} \\
\end{array}
\right.
\]
where 
\begin{align*}
\tilde{c}_{i}&:= c_{i} + \sum_{k=0}^{h-i} \binom{h-1}{h-i-k} \tilde{e}_{h-1-k} \left( \frac{a_{\ell}}{\tilde{b}_{\ell-1}}\right)^{(h-i-k)}\\
\tilde{c}_{0}&:=c_{0}
\end{align*}
for $1\leq i \leq h$
and 
\begin{align*}
    \tilde{a}_{i}&:= a_{i}-\sum_{k=0}^{\ell-i}\binom{\ell-1}{\ell-i-k}\tilde{b}_{\ell-1-k}\left( \frac{a_{\ell}}{\tilde{b}_{\ell-1}}\right)^{(\ell-i-k)}\\
    \tilde{a}_{0}&:=a_{0}
\end{align*}
for $1\leq i\leq \ell-1$.

We must first prove that each of these linear substitutions is well-defined under the assumptions of Theorem \ref{thm:defnbijection}, i.e., that the coefficients $a_\ell$, $\b_h$, and $\tilde{b}_{\ell-1}$ which appear in the denominators of substitutions are all non-zero.
\begin{lemma}\label{lem:ABnew}
The tuple $(b_{i}, a_{i}:0\leq i \leq \ell )$ is interdefinable with $(b_\ell, a_\ell, \tilde{b}_{i}, \tilde{a}_{i}:0\leq i \leq \ell-1 )$ as defined in Algorithm A.
\end{lemma}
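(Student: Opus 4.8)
\emph{Approach.} I would read the passage from the coefficient tuple $(a_i,b_i:0\le i\le\ell)$ to $(b_\ell,a_\ell,\tilde b_i,\tilde a_i:0\le i\le\ell-1)$ as the composite of just the two substitutions in Algorithm A that touch the top equation — the first, $u=\tilde u+(b_\ell/a_\ell)z$, which replaces $(b_0,\dots,b_{\ell-1})$ by $(\tilde b_0,\dots,\tilde b_{\ell-1})$ while fixing the $a_i$ and using up $b_\ell$, and the third, $z=\tilde z+(a_\ell/\tilde b_{\ell-1})\tilde u'$, which replaces $(a_0,\dots,a_{\ell-1})$ by $(\tilde a_0,\dots,\tilde a_{\ell-1})$ while fixing the $\tilde b_i$ and using up $a_\ell$ — since the middle substitution $v=\tilde v+(d_h/\beta_h)z$ alters only the lower equation and leaves every $a_i,b_i$ unchanged. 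So the task is to check that each of these two replacements, together with the recorded coefficient ($b_\ell$ resp. $a_\ell$), is reversible over $\mathbb{Q}$, which I would do directly from the displayed formulas.

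\emph{Forward containment.} This is immediate: $a_\ell,b_\ell$ appear among the $a_i,b_i$, and the defining formulas present each $\tilde b_i$, and then each $\tilde a_i$, as a differential-rational function of $(a_0,\dots,a_\ell,b_0,\dots,b_\ell)$, whose only denominators are $a_\ell$ and $\tilde b_{\ell-1}$, both nonzero because the substitutions are assumed well-defined. Hence $(b_\ell,a_\ell,\tilde b_i,\tilde a_i)$ lies in the differential field generated over $\mathbb{Q}$ by $(a_i,b_i)$.

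\emph{Reverse containment.} Here the point is that the recovery is triangular. I would first solve for the $a_i$: the relation $\tilde a_i=a_i-\sum_{k=0}^{\ell-i}\binom{\ell-1}{\ell-i-k}\tilde b_{\ell-1-k}(a_\ell/\tilde b_{\ell-1})^{(\ell-i-k)}$ (together with $\tilde a_0=a_0$) has its correction term built from $a_\ell$ and from $\tilde b_{i-1},\dots,\tilde b_{\ell-1}$ only, so $a_i=\tilde a_i+(\text{correction})$ puts $a_0,\dots,a_{\ell-1}$ — and hence, with $a_\ell$, all of $a_0,\dots,a_\ell$ — into $\mathbb{Q}\langle b_\ell,a_\ell,\tilde b_j,\tilde a_j\rangle$. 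Then I would solve for the $b_i$: the relation $\tilde b_i=b_i-\sum_{k=0}^{\ell-i}\binom{\ell}{\ell-i-k}a_{\ell-k}(b_\ell/a_\ell)^{(\ell-i-k)}$ has its correction term built from $b_\ell$ and from $a_i,\dots,a_\ell$, all now available, so $b_i=\tilde b_i+(\text{correction})$ places $b_0,\dots,b_{\ell-1}$ there too, and $b_\ell$ is given. This yields the reverse containment, and hence interdefinability.

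\emph{Main obstacle.} There is no analytic content; the one thing that must be checked carefully is that the two-stage recovery is genuinely noncircular — that the $\tilde a$-relation involves no unresolved $a_j$ (only $a_\ell$ together with $\tilde b$'s), and that in the $\tilde b$-relation every $a_j$ that occurs has $j\ge i$, so it was already produced in the first stage. This is a matter of reading off the summation ranges in the displayed definitions and confirming that each index stays within $[0,\ell-1]$ (respectively $[0,\ell]$); everything else is a bookkeeping-free consequence of the explicit formulas.
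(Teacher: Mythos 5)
Your proof is correct and follows essentially the same route as the paper's: the $\tilde b_i$ are interdefinable with the $b_i$ over $b_\ell$ and the $a_j$'s, the $\tilde a_i$ are interdefinable with the $a_i$ over $a_\ell$ and the $\tilde b_j$'s, and the two triangular inversions combine to give the claim. Your explicit recovery order (first all $a_j$, then the $b_j$) is just a spelled-out version of the paper's ``combining these two facts,'' so there is nothing to add.
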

\begin{proof}
It is clear from the definition of $\tilde{b}_{i}$ that $(b_{i}:0\leq i \leq \ell-1)$ is interdefinable with $(\tilde{b}_{i}:0\leq i \leq \ell-1)$ over $b_\ell$ and $\{a_{i}:0\leq i \leq \ell\}$.
By adding the parameters themselves, we have 
$(b_{i}, a_{i}:0\leq i \leq \ell)$ is interdefinable with $(b_{\ell}, \tilde{b}_{i}:0\leq i \leq \ell-1)\cup( a_{i}:0\leq i \leq \ell)$.
It is also clear from the definition of $\tilde{a}_{i}$ that
$(a_{i}: 0\leq i \leq \ell-1)$ is interdefinable with $(\tilde{a}_{i}:0\leq i \leq \ell-1)$ over $a_{\ell}$ and $\{\tilde{b}_{i}:0\leq i \leq \ell-1\}$.
Combining these two facts, we conclude that the desired tuples are interdefinable.
\end{proof}
Assuming the initial coefficients $\{a_i,b_i:0\leq i \leq \ell\}\cup\{\b_i:0\leq i \leq h\}$ constitute an independent set of differential transcendentals, Lemma \ref{lem:ABnew} shows that  $\{\tilde{a}_i,\tilde{b}_i:0\leq i \leq \ell-1\}\cup\{\b_i:0\leq i \leq h\}\cup\{a_\ell,b_\ell\}$ are also independent differential transcendentals, and hence non-zero. It follows that the substitutions in Algorithm A are well-defined as long as the initial coefficients have this property.
\begin{lemma}\label{lem:alga}
Suppose that $(e_i:0\leq i \leq h-1)$ is differentially algebraic over $\{c_i:0\leq i \leq h\}\cup\{a_\ell, b_\ell, \tilde{a}_{\ell -1}, \tilde{b}_{\ell-1}, \}\cup\{\b_i:0 \leq i \leq h\}$. Then $(c_i:0\leq i \leq h)$ is inter-differentially algebraic with $(\tilde{c}_i:0\leq i \leq h)$ over $\{a_i,b_i:0\leq i \leq \ell\}\cup \{\b_i:0\leq i \leq \ell\}$, and $(\tilde{e}_i:0\leq i \leq h-1)$ is differentially algebraic over $\{\tilde{c}_i:0\leq i \leq h\}\cup\{a_\ell, b_\ell, \tilde{a}_{\ell -1}, \tilde{b}_{\ell-1}, \}\cup\{\b_i:0 \leq i \leq h\}$.
\end{lemma}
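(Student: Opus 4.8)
The plan is to read the three substitutions of Algorithm~A as a differential‑polynomial change of coordinates on the pair of coefficient tuples $\bar c=(c_0,\dots,c_h)$, $\bar e=(e_0,\dots,e_{h-1})$ and to track differential transcendence degree through it. Set $F:=\QQQ\langle a_\ell,b_\ell,\tilde a_{\ell-1},\tilde b_{\ell-1},\b_0,\dots,\b_h\rangle$; this is the parameter field named in the second conclusion and is contained in the field named in the first (note $\tilde a_{\ell-1}=a_{\ell-1}-a_\ell$ and $\tilde b_{\ell-1}$ is a differential polynomial in $a_{\ell-1},a_\ell,b_{\ell-1},b_\ell$), so it suffices to prove both conclusions over $F$. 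By Lemma~\ref{lem:ABnew} the quantities $\tilde a_i,\tilde b_i$ lie in $F$, so every coefficient occurring in the three substitutions of Algorithm~A — namely $b_\ell/a_\ell$, $d_h/\b_h=c_h(b_\ell/a_\ell)/\b_h$ and $a_\ell/\tilde b_{\ell-1}$ — together with all of its derivatives, lies in $F$ apart from the occurrences of $c_h$; moreover $b_\ell/a_\ell$ and $a_\ell/\tilde b_{\ell-1}$ are nonzero by Lemma~\ref{lem:ABnew} and the genericity of the initial coefficients. Reading off the displayed definitions, each $d_i$, $\tilde e_i$, $\tilde c_i$ is a differential polynomial over $F$, linear in the $c_j$'s and $e_j$'s, with the triangular shape
\[
\tilde e_i = e_i + \tfrac{b_\ell}{a_\ell}\,c_i + L_i(c_{i+1},\dots,c_h)\qquad(0\le i\le h-1),
\]
\[
\tilde c_0=c_0,\qquad \tilde c_i = c_i + \tfrac{a_\ell}{\tilde b_{\ell-1}}\,\tilde e_{i-1} + M_i(\tilde e_i,\dots,\tilde e_{h-1})\qquad(1\le i\le h),
\]
where $L_i,M_i$ are $F$‑linear differential operators whose leading coefficients are again products of $b_\ell/a_\ell$, $a_\ell/\tilde b_{\ell-1}$ and the $\b_j$'s.

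From this I would first extract the field identity $F\langle\bar c,\bar e\rangle=F\langle\bar{\tilde c},\bar{\tilde e}\rangle$, where $\bar{\tilde c}=(\tilde c_0,\dots,\tilde c_h)$ and $\bar{\tilde e}=(\tilde e_0,\dots,\tilde e_{h-1})$. The inclusion ``$\supseteq$'' is clear. For ``$\subseteq$'', the second display yields $c_i=\tilde c_i-\tfrac{a_\ell}{\tilde b_{\ell-1}}\tilde e_{i-1}-M_i(\tilde e_i,\dots,\tilde e_{h-1})$, exhibiting each $c_i$ as an $F$‑linear combination of the $\tilde c$'s and $\tilde e$'s; substituting this back into the first display gives each $e_i$ in $F\langle\bar{\tilde c},\bar{\tilde e}\rangle$ as well. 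Now the hypothesis says precisely that $\bar e$ is differentially algebraic over $F\langle\bar c\rangle$, so $F\langle\bar c,\bar e\rangle$ is differentially algebraic over $F\langle\bar c\rangle$, and hence by the identity so are $\bar{\tilde c}$ and $\bar{\tilde e}$. This gives one direction of the asserted inter‑differential‑algebraicity of $\bar c$ and $\bar{\tilde c}$; and since $\bar e$ and $\bar{\tilde e}$ are interdefinable over $F\langle\bar c\rangle$, it reduces the remaining content of the lemma (the other direction, together with the statement about $\bar{\tilde e}$) to the single claim that $\bar c$ is differentially algebraic over $F\langle\bar{\tilde c}\rangle$.

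Proving that claim is the crux, and the only place the triangular shape and the non‑vanishing of leading coefficients are used. Substituting the first display into the second expresses each $\tilde c_i$ as $c_i$ plus an $F$‑linear differential‑polynomial combination of the $c_j$'s and $e_j$'s; invoking the hypothesis (each $e_j$ satisfies a nontrivial algebraic differential equation over $F\langle\bar c\rangle$) and eliminating the $e_j$'s leaves a system of algebraic differential equations in the unknowns $c_0,\dots,c_h$ with coefficients in $F\langle\bar{\tilde c}\rangle$. The linear part of this system is triangular with invertible diagonal: working through the equations in the order $\tilde c_h,\tilde c_{h-1},\dots,\tilde c_1$ and isolating the highest‑index $c_j$ at each stage — using each time that the leading operator coefficient being inverted, a product of copies of $b_\ell/a_\ell$, $a_\ell/\tilde b_{\ell-1}$, $1$ and the $\b_j$'s, is nonzero — expresses each $c_j$ as the solution of a nontrivial linear differential equation over $F\langle\bar{\tilde c}\rangle$, so $\bar c$ is differentially algebraic over $F\langle\bar{\tilde c}\rangle$. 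The genuinely technical part is the bookkeeping that pins down the orders and leading coefficients of $L_i$, $M_i$ and of their compositions and confirms that every coefficient inverted along the way is nonzero (invoking the genericity of the initial coefficients exactly as in the leading‑term tables in the proof of Lemma~\ref{prop:example}); this is the step I expect to require the most care.
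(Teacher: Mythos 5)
Your preparatory reductions are fine and in places cleaner than the paper: the two structural displays for $\tilde e_i$ and $\tilde c_i$ are correct, the field identity $F\langle \bar c,\bar e\rangle=F\langle \tilde c_0,\dots,\tilde c_h,\tilde e_0,\dots,\tilde e_{h-1}\rangle$ holds, one direction of the interalgebraicity does follow from the hypothesis, and the reduction of everything else (by transitivity of differential algebraicity) to the single claim that $\bar c$ is differentially algebraic over $F\langle \tilde c_0,\dots,\tilde c_h\rangle$ is valid. (Two side slips: $\tilde a_{\ell-1}$ is not $a_{\ell-1}-a_\ell$, and for $i<\ell-1$ the $\tilde a_i,\tilde b_i$ need not lie in $F$; but nothing you use depends on this, since the three substitutions only involve $b_\ell/a_\ell$, $c_h b_\ell/(a_\ell\b_h)$ and $a_\ell/\tilde b_{\ell-1}$.) The gap is precisely at the crux step, and it is not "bookkeeping."

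First, the system is not triangular in the sense you describe. Expanding, for $1\le i\le h$ one finds that $\tilde c_i$ equals $\frac{b_\ell}{\tilde b_{\ell-1}}c_{i-1}$ plus an $F$-linear expression in $c_i,\dots,c_{h-1}$ (undifferentiated), in $c_h,c_h',\dots,c_h^{(h-i+1)}$, and in $e_{i-1},\dots,e_{h-1}$, together with $\tilde c_0=c_0$; so the variable that is new in the $i$-th equation is the \emph{lowest}-index $c_{i-1}$, not the highest, and $c_h$ occurs with derivatives in every equation. Solving downward therefore only expresses $c_0,\dots,c_{h-1}$ as differential rational functions of $\tilde c_1,\dots,\tilde c_h$, of $c_h$ and its derivatives, and of $\bar e$; it produces no equation for $c_h$ and leaves $\bar e$ everywhere. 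Second, "eliminating the $e_j$'s by invoking the hypothesis" is circular: the hypothesis supplies relations for the $e_j$ with coefficients in $F\langle \bar c\rangle$, not in $F\langle \tilde c_0,\dots,\tilde c_h\rangle$, so elimination need not leave any system for the $c_j$ over $F\langle \tilde c_0,\dots,\tilde c_h\rangle$ that is nontrivial in each $c_j$. Indeed, from the stated hypothesis alone your crux claim can fail: back-solving the triangular dependence of $\tilde c_i$ on $\tilde e_{i-1},\dots,\tilde e_{h-1}$, one may choose $e_j\in F\langle \bar c\rangle$ so that $\tilde c_1=\dots=\tilde c_h$ lie in $F$ while $\tilde c_0=c_0$, and then $\bar c$ is not differentially algebraic over $F\langle \tilde c_0,\dots,\tilde c_h\rangle$ when the $c_i$ are differentially independent over $F$. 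So any correct argument must use the explicit form of the transformation rather than generic elimination, which is what the paper does: it writes the banded (not triangular) matrix of Equation \ref{eq:matrix} relating $(c_i)_{i<h}$ to $(\tilde c_i)_{i<h}$ over the parameters $c_h$, $\bar e$, $F$, proves its pivots nonzero by the induction on their order in $a_\ell$ (this non-vanishing is \emph{not} an immediate genericity statement about products of $b_\ell/a_\ell$, $a_\ell/\tilde b_{\ell-1}$ and the $\b_j$), and then treats $c_h$ separately by substituting the inverted system into Equation \ref{eq:ch}, only afterwards appealing to the hypothesis to dispose of $\bar e$ — and that final pass, removing $\bar e$ from the base, is exactly the delicate point your sketch assumes away.
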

\begin{proof}
From the definitions, it is clear that $(\tilde{c}_i:0\leq i \leq h)$ is differentially algebraic over $(c_i:0\leq i \leq h)$. Now to prove the other direction, we combine the definitions of $\tilde{c}_i$, $\tilde{e}_i$, and $d_i$, we can write $\tilde{c}_i$ in terms of $(c_i:0\leq i \leq h)$ (as well as other terms). Doing this for $\tilde{c}_h$ (and performing some simplifications), we have
\begin{equation}\label{eq:ch}
    \tilde{c}_h = \frac{b_\ell}{\tilde{b}_{\ell-1}} c_{h-1}-\frac{ h b_{\ell}}{ \tilde{b}_{\ell-1}}c_{h}'
    +\left[ 1+ \frac{a_\ell}{\tilde{b}_{\ell-1}}  \left(h\left(\frac{b_\ell}{a_\ell}\right)'-h\b_h\left( \frac{b_\ell}{\b_h a_\ell} \right)' - \frac{\b_{h-1}b_\ell}{\b_h a_\ell} \right)\right] c_h + \frac{a_\ell}{\tilde{b}_{\ell-1}} e_{h-1}.
\end{equation}
Doing the same other $\tilde{c}$'s, we see that $c_h$ appears with order at least one, but each $c_i$ is order zero and linear over $\QQQ\langle a_\ell, \tilde{a}_{\ell-1},b_\ell, \tilde{b}_{\ell-1}\rangle$ for $i<h$. This is represented in the following matrix equation:
\begin{equation}\label{eq:matrix}
\begin{bmatrix}
M_1 & M_0 & 0 &  \cdots & 0 & 0 & 0 & 0\\
M_2 & M_1 & M_0 &  \cdots & 0 & 0 & 0 & 0\\
M_3 & M_2 & M_1 &  \cdots & 0 & 0 & 0 & 0\\
\vdots & \vdots & \vdots &  \ddots & \vdots & \vdots & \vdots & \vdots\\
M_{h-3} & M_{h-4} & M_{h-5} & \cdots & M_1 & M_0 & 0 & 0\\
M_{h-2} & M_{h-3} & M_{h-4} & \cdots & M_2 & M_1 & M_0 & 0\\
M_{h-1} & M_{h-2} & M_{h-3} &  \cdots & M_3 & M_2 & M_1 & M_0\\
0 & 0 & 0 & \cdots &0 & 0 & 0 & 1
\end{bmatrix}
\begin{bmatrix}
c_{h-1} \\ c_{h-2} \\ c_{h-3} \\ \vdots  \\ c_{3} \\ c_{2} \\ c_{1} \\ c_{0}
\end{bmatrix}
=
\begin{bmatrix}
\tilde{c}_{h-1} \\ \tilde{c}_{h-2} \\ \tilde{c}_{h-3} \\ \vdots  \\ \tilde{c}_{3} \\ \tilde{c}_{2} \\ \tilde{c}_{1} \\ \tilde{c}_{0}
\end{bmatrix}
-
\begin{bmatrix}
L_{h-1} \\ L_{h-2} \\ L_{h-3} \\ \vdots \\ L_{3} \\ L_{2} \\ L_{1} \\ 0
\end{bmatrix}
\end{equation}
where $L_i\in \QQQ\left\langle \{c_{h},a_{\ell},b_{\ell},\tilde{a}_{\ell-1}, \tilde{b}_{\ell-1}\}\cup\{\b_i:0 \leq i \leq h\}\cup\{e_i:0\leq i \leq h-1\}\right\rangle$.
The matrix entries can be computed as follows:
\begin{align*}
M_0 &= \frac{b_{\ell}}{\tilde{b}_{\ell-1}}\\
M_1 &= h \left(\frac{b_{\ell}}{a_{\ell}}\right)' \left(\frac{a_{\ell}}{\tilde{b}_{\ell-1}}\right) + (h-1)\left(\frac{b_{\ell}}{a_{\ell}}\right)\left(\frac{a_{\ell}}{\tilde{b}_{\ell-1}}\right)'+1\\
M_i &= \sum_{k=0}^i\binom{h-1}{k}\binom{h}{i-k}\left(\frac{b_{\ell}}{a_{\ell}}\right)^{(i-k)} \left(\frac{a_{\ell}}{\tilde{b}_{\ell-1}}\right)^{(k)}
\end{align*}
for $i>1$.
Using Gaussian elimination, we can make this matrix lower triangular:
\[
M^*=\begin{bmatrix}
M_{1,h-1} & 0 & 0 & \cdots & 0 & 0 & 0 & 0\\
M_{2,h-2} & M_{1,h-2} & 0 & \cdots & 0 & 0 & 0 & 0\\
M_{3,h-3} & M_{2,h-3} & M_{1,h-3} & \cdots & 0 & 0 & 0 & 0\\
\vdots & \vdots & \vdots & \ddots & \vdots & \vdots & \vdots & \vdots\\
M_{h-3,3} & M_{h-4,3} & M_{h-5,3} & \cdots & M_{1,3} & 0 & 0 & 0\\
M_{h-2,2} & M_{h-3,2} & M_{h-4,2} & \cdots & M_{2,2} & M_{1,2} & 0 & 0\\
M_{h-1,1} & M_{h-2,1} & M_{h-3,1} & \cdots & M_{3,1} & M_{2,1} & M_{1,1} & 0\\
0 & 0 & 0 & \cdots &0 & 0 & 0 & 1
\end{bmatrix}
\]
where the new matrix entries can be computed recursively for $2\leq j\leq h-1$ and $1\leq i \leq h-j$
$$
    M_{i,1}:=M_{i}, \quad
    M_{i,j}:=M_{i}-\left(\frac{M_0}{M_{1,j-1}}\right) M_{i+1,j-1}.
$$
\begin{claim} For all $1\leq j\leq h-1$ and $1\leq i \leq h-j$, $M_{i,j}\neq 0$, and therefore, the determinant of $M^*$ is non-zero.
\end{claim}
\begin{proof}
We prove this by showing that $M_{i,j}$ is order $i+j-1$ in $a_{\ell}$ using induction on $j$.
It follows from Claim \ref{lem:ABnew} that $M_{i,1}$ is order $i$ in $a_{\ell}$ for each $i$.
Now suppose the claim holds for $j$.
Since $M_{i,j+1}=M_{i}-\left(\frac{M_0}{M_{1,j}}\right) M_{i+1,j}$, we can see that $M_{i,j+1}$ is order $i+j$ since $M_{i+1,j}$ is order $i+j$ by assumption, and all other terms in the definition have strictly smaller order. Thus the determinant is non-zero. 
\end{proof}

Since $M^*$ is non-singular, it follows also that the original matrix in Equation \ref{eq:matrix} is non-singular. Therefore, $(c_{i}:0\leq i \leq h-1)$ and $(\tilde{c}_{i}:0\leq i \leq h-1)$ are interdefinable over $\{c_{h},a_{\ell},b_{\ell},\tilde{a}_{\ell-1}, \tilde{b}_{\ell-1}\}\cup \{\b_i:0 \leq i \leq h\}\cup\{e_i:0\leq i \leq h-1\}$.

Turning our attention once more to Equation \ref{eq:ch}, we use this interdefinability to substitute $c_{h-1}$ for an expression involving $c_h$ and $(\tilde{c}_i:0\leq i \leq h)$.
The resulting equation establishes a differential relation between $c_h$ and $(\tilde{c}_i:0\leq i \leq h)$.
It follows that $(c_i:0\leq i \leq h)$ is differentially algebraic over $\tilde{c}_i:0\leq i \leq h)$ along with $\{a_\ell, b_\ell, \tilde{a}_{\ell-1}, \tilde{b}_{\ell-1}\}\cup \{\b_i:0 \leq i \leq h\}\cup\{e_i:0\leq i \leq h-1\}$.
By assumption, $(e_i:0\leq i \leq h-1)$ is identically 0 or differentially algebraic over $\{c_i:0\leq i \leq h\}\cup\{a_\ell, b_\ell, \tilde{a}_{\ell -1}, \tilde{b}_{\ell-1}, \}\cup\{\b_i:0 \leq i \leq h\}$. In either case, it follows that $(c_i:0\leq i \leq h)$ and $\tilde{c}_i:0\leq i \leq h)$ are inter-differentially algebraic over $\{a_\ell, b_\ell, \tilde{a}_{\ell-1}, \tilde{b}_{\ell-1}\}\cup \{\b_i:0 \leq i \leq h\}$.

So that this lemma can be applied inductively, we show that $(\tilde{e}_i:0\leq i \leq h-1)$ is differentially algebraic over $\{\tilde{c}_i:0\leq i \leq h\}\cup\{a_\ell, b_\ell, \tilde{a}_{\ell -1}, \tilde{b}_{\ell-1}, \}\cup\{\b_i:0 \leq i \leq h\}$.
If $e_i=0$ for all $i$, this result follows from the definition of $\tilde{e}_i$. 
Suppose otherwise, and we can see that $\tilde{e}_{i}$ is defined by $(c_{i}:0\leq i\leq h)$, $(e_{i}:0\leq i\leq h-1)$, and $\{a_\ell, b_\ell, \tilde{b}_{\ell-1}, \tilde{a}_{\ell-1}\}\cup \{\b_i:0 \leq i \leq h\}$.
By assumption on $e_i$, we can see that $(\tilde{e}_{i}:0\leq i\leq h-1)$ is differentially algebraic over $(c_{i}:0\leq i\leq h)$ and $\{a_\ell, b_\ell, \tilde{b}_{\ell-1}, \tilde{a}_{\ell-1}\}\cup \{\b_i:0 \leq i \leq h\}$.
We have already shown that, $(c_{i}:0\leq i\leq h)$ and $(\tilde{c}_{i}:0\leq i\leq h)$ are inter-differentially algebraic, so $(\tilde{e}_{i}:0\leq i\leq h-1)$ is differentially algebraic over $(\tilde{c}_{i}:0\leq i\leq h)$, and $\{a_\ell, b_\ell, \tilde{b}_{\ell-1}, \tilde{a}_{\ell-1}\}\cup \{\b_i:0 \leq i \leq h\}$.
\end{proof}

\textbf{Algorithm B:}

We begin with a system of equations of the form
\begin{equation}\label{eq:algbstart}
    \left\{
\begin{array}{lcl}
\sum_{i=0}^{h} a_i u^{(i)} & = & \sum_{i=0}^{h} b_i z^{(i)} \\
\sum_{i=0}^h c_i u^{(i)}   & = & \sum_{i=0}^h \b_i w^{(i)} \\
\end{array}
\right.
\end{equation}

Algorithm B will eliminate the top equation in this system by first applying Algorithm A $h$ times, reducing the order of the top equation to zero:
\[
\left\{
\begin{array}{lcl}
\tilde{a}_0 u & = & \tilde{b}_0 z^{(i)} \\
\sum_{i=0}^h \tilde{c}_i u^{(i)} + \sum_{i=0}^{h-1} \tilde{e}_i z^{(i)}  & = & \sum_{i=0}^h \b_i w^{(i)} \\
\end{array}
\right.
\]
Finally, $z=\frac{\tilde{a}_0}{\tilde{b}_0}u$, so we substitute this expression for $z$ in the lower equation, eliminating $z$ from the system:
\begin{equation}\label{eq:algbend}
    \sum_{i=0}^h \hat{c}_i u^{(i)}   =  \sum_{i=0}^h \b_i w^{(i)}
\end{equation}
where
\begin{align*}
    \hat{c}_{h}&:=\tilde{c}_{h}\\
    \hat{c}_{i}&:=\tilde{c}_{i}+\sum_{k=0}^{h-i} \binom{h-1}{h-i-k}\tilde{e}_{h-1-k}\left(\frac{\tilde{a}_{0}}{\tilde{b}_{0}}\right)^{(h-i-k)}+ \cdots + \tilde{e}_{i}\left(\frac{\tilde{a}_{0}}{\tilde{b}_{0}}\right).
\end{align*}

\begin{lemma}\label{lem:algb}
For $a_i$, $b_i$, $c_i$, $\hat{c}_i$, and $\b_i$ as defined in Algorithm B, suppose $\{a_i, b_i, \b_i:0\leq i \leq h\}$ is an independent set of differential transendentals and suppose $(a_i:0\leq i \leq h)$ and $(c_i:0\leq i \leq h)$ are inter-differentially algebraic over the other coefficients $\{b_i, \b_i:0\leq i \leq h\}$.
Then $(c_i:0\leq i \leq h)$ and $(\hat{c}_i:0\leq i \leq h)$ are inter-differentially algebraic over $\{b_i, \b_i:0\leq i \leq h\}$.
\end{lemma}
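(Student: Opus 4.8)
The plan is to run the $h$ invocations of Algorithm A and then the closing substitution $z=(\tilde a_0/\tilde b_0)u$, tracking at each stage how the lower‑equation coefficients transform, and then to feed the outcome into a differential‑transcendence‑degree count. Write $F=\QQQ\langle b_0,\dots,b_h,\b_0,\dots,\b_h\rangle$ and $K=F\langle a_0,\dots,a_h\rangle$. The hypothesis that $\bar a=(a_i)$ and $\bar c=(c_i)$ are inter‑differentially algebraic over $F$ says precisely that $\bar c$ is differentially algebraic over $K$ and $\bar a$ is differentially algebraic over $F\langle\bar c\rangle$; in particular the $\delta$‑transcendence degree of $F\langle\bar c\rangle$ over $F$ equals $h+1$.

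First I would settle well‑definedness. Applying Lemma \ref{lem:ABnew} inductively along the $h$ stages of Algorithm A that are carried out inside Algorithm B shows that at every stage the coefficients occurring in the denominators of the substitutions ---the $a_\ell$, $\b_h$ and $\tilde b_{\ell-1}$ of each invocation, together with the final $\tilde a_0,\tilde b_0$--- are, jointly with the $\b_i$, interdefinable with the original independent family $\{a_i,b_i,\b_i\}$. Hence they are nonzero differential transcendentals, every substitution is legitimate, and all of these coefficients lie in $K$.

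Next I would iterate Lemma \ref{lem:alga}. At the first invocation of Algorithm A the ``$e$''‑coefficients of the lower equation vanish identically, so the hypothesis of Lemma \ref{lem:alga} holds trivially; the part of its conclusion concerning the new ``$e$''‑tuple is exactly the hypothesis needed for the next invocation, so the induction runs all the way down. This yields: the final ``$c$''‑tuple $\tilde{\bar c}$ is inter‑differentially algebraic with $\bar c$ over a subfield of $K$, and the final ``$e$''‑tuple $\tilde{\bar e}$ is differentially algebraic over $K\langle\tilde{\bar c}\rangle$. The closing substitution gives $\hat c_h=\tilde c_h$ and, for $i<h$, $\hat c_i=\tilde c_i+(\text{a }\QQQ\langle\tilde a_0,\tilde b_0\rangle\text{-linear combination of the }\tilde e_j\text{ and their derivatives})$; because of this triangular shape, $\tilde{\bar c}$ is differentially rational in $\hat{\bar c},\tilde{\bar e}$ and elements of $K$. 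Chaining these facts, $\hat{\bar c}$ is differentially algebraic over $F\langle\bar a,\bar c\rangle$, and since $\bar a$ is differentially algebraic over $F\langle\bar c\rangle$ by hypothesis, it follows that $\hat{\bar c}$ is differentially algebraic over $F\langle\bar c\rangle$ ---one of the two directions of the lemma.

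The main obstacle is the reverse direction, that $\bar c$ is differentially algebraic over $F\langle\hat{\bar c}\rangle$. Combining $\hat{\bar c}$ differentially algebraic over $F\langle\bar c\rangle$ with the fact that $F\langle\bar c\rangle$ has $\delta$‑transcendence degree $h+1$ over $F$, this direction is equivalent to $\hat c_0,\dots,\hat c_h$ being differentially algebraically independent over $F=\QQQ\langle\bar b,\bar\b\rangle$, equivalently to $\bar a$ being differentially algebraic over $F\langle\hat{\bar c}\rangle$; note that over the larger field $K$ everything has already collapsed, so the real content lies only over $F$. I would establish this by a ranking and leading‑term computation in the style of the non‑vanishing tables in Lemma \ref{prop:example} and the determinant argument inside the proof of Lemma \ref{lem:alga}: fix an elimination ranking on the family $\{a_i,b_i,\b_i\}$ with, say, $a_0$ highest, compute the leaders of $\hat c_h=\tilde c_h$, then of $\hat c_{h-1}$, and so on downward, using the control from the first step on all intermediate coefficients, and verify that these leaders form a triangular system in the successive derivatives of the $a_i$, so that $\bar a$ is interdefinable with $\hat{\bar c}$ over $F$ up to differential algebraicity. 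Carrying out that bookkeeping ---pinning down the leading terms of the $\hat c_i$ through the nested substitutions of the two algorithms--- is where the genuine work is; everything else is formal manipulation of differential algebraicity together with the hypothesis.
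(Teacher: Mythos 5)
Your well-definedness argument via Lemma \ref{lem:ABnew}, your inductive application of Lemma \ref{lem:alga} (with the $e$-coefficients vanishing at the first stage), your forward direction, and your transcendence-degree reduction of the reverse direction to the differential algebraic independence of $\hat{c}_0,\ldots,\hat{c}_h$ over $F=\QQQ\langle\bar{b},\bar{\b}\rangle$ are all correct and consistent with the paper. The gap is that this independence is never actually proved: you defer it to an unexecuted ``ranking and leading-term'' computation through the $h$ nested applications of Algorithm A followed by the closing substitution, without identifying the leaders or arguing that the claimed triangular structure survives the recursion, and you yourself flag this as where the genuine work lies. That step is the entire hard content of the lemma, and it cannot be waved through as bookkeeping: the $\hat{c}_i$ are produced by a long recursion of substitutions, and the paper's $j$-function example shows exactly how nonvanishing/leading-term claims about such recursively defined coefficients can fail when independence is not carefully tracked. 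As written, the proposal is a plan rather than a proof at the decisive point.

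The paper's own proof avoids any such computation, and the difference is instructive: the hypothesis that $(a_i:0\leq i\leq h)$ and $(c_i:0\leq i\leq h)$ are inter-differentially algebraic over $\{b_i,\b_i\}$ is used not merely to set up an equivalence, but to transfer the conclusions of Lemmas \ref{lem:ABnew} and \ref{lem:alga} --- inter-differential algebraicity of $(c_i)$ with $(\tilde{c}_i)$, and differential algebraicity of the $\tilde{e}_i$, stated there over bases containing the $a_i$ --- down to the base $\{b_i,\b_i\}$ by trading the $a_i$ for the $c_i$ (hence for the $\tilde{c}_i$); the triangular definition of the $\hat{c}_i$ then yields both directions by formal chaining, so the independence of the $\hat{c}_i$ over $F$ falls out as a consequence rather than something verified by hand. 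Your route would in effect redo, globally over the smaller field $F$ and across all $h$ iterations at once, the leading-term analysis that the paper deliberately confines to the single-step Lemma \ref{lem:alga}. To salvage your approach you would have to exhibit the leaders of the $\hat{c}_i$ and prove the triangularity explicitly; the shorter repair is to invoke the lemma's hypothesis in the reverse direction as the paper does.
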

\begin{proof}
By Lemma \ref{lem:alga} and Lemma \ref{lem:ABnew}, we see that $(c_i:0\leq i \leq h)$ is inter-differentially algebraic with $(\tilde{c}_i:0\leq i \leq h)$ over $\{a_i,b_i:0\leq i \leq h\}\cup \{\b_i:0\leq i \leq \ell\}$.
Since $(a_i:0\leq i \leq h )$ is inter-differentially algebraic with $(c_i:0\leq i  \leq h)$ by assumption, it follows that $(c_i:0\leq i \leq h)$ is inter-differentially algebraic with $(\tilde{c}_i:0\leq i \leq h)$ over $\{b_i:0\leq i \leq h\}\cup \{\b_i:0\leq i \leq \ell\}$. Consequently, $(\tilde{c}_i:0\leq i \leq h)$ is independent over $\{b_i:0\leq i \leq h\}\cup \{\b_i:0\leq i \leq \ell\}$.

It follows from the definition that  $(\tilde{c}_{i}:0\leq i \leq h)$ is interdefinable with $(\hat{c}_{i}:0\leq i \leq h)$ over $\{a_{i}, b_{i}:0\leq i\leq h, \}\cup \{\b_i:0 \leq i \leq h\}\cup \{\tilde{e}_{i}:0\leq i\leq h-1\}$. By Lemma \ref{lem:alga}, $(\tilde{e}_{i}:0\leq i\leq h-1)$ is differentially algebraic over $\{\tilde{c}_i:0\leq i \leq h\}\cup\{a_i,b_i:0\leq i \leq h\}\cup\{\b_i:0 \leq i \leq h\}$, and we have already seen that $(a_i:0\leq i \leq h)$ is inter-differentially algebraic with $(\tilde{c}_i:0\leq i \leq h)$. Thus, $(\hat{c}_{i}:0\leq i \leq h)$ is interdefinable with $(\tilde{c}_{i}:0\leq i \leq h)$, and hence with $(c_i:0\leq i \leq h)$ over $\{b_i:0\leq i \leq h\}\cup \{\b_i:0\leq i \leq \ell\}$.
\end{proof}
\begin{remark}
Due to the nature of the substitutions defining Algorithm A and Algorithm B, only coefficients from the first equation influence the lower equation, and not vice versa. For simplicity, Algorithm A and Algorithm B have been defined for systems of two equations, although due to this lack of interaction, these algorithms can be easily applied to systems of many equations. The only change that need be made is that an analogous form of the middle substitution in Algorithm A must be applied to all lower equations simultaneously (lowering the order of $z$ in all of these equations by 1).
\end{remark}
Using Lemma \ref{lem:algb}, we can apply Algorithm B recursively to system \ref{eq:initialsys} until all but one variables have been eliminated. This results in a definable bijection between system \ref{eq:initialsys} and $\AAA^1(\UU)$, resulting in a proof of Theorem \ref{thm:defnbijection}.

\begin{proof}[Proof of Theorem \ref{thm:main}]
Let $f(x)$ be a generic differential polynomial of order $h>1$ and degree $d$. Let $\bar{a}$ be the coefficients of $f$ and let $p$ be the type of a generic solution to $f(x)=0$. Suppose that $d\geq 2 \cdot (\nmdeg(p)+1)$. Let $\bar{a}=(a_1,\ldots a_m)$ be realizations of $p$ which are algebraically independent over $\QQQ\langle \bar{\a}\rangle$ where $m\leq \frac{d}{2}$. By Lemma \ref{lem:higherordergeneric}, the differential tangent space $T^\D_{(\bar{a},\bar{\a})}\left(V^{[m]}\right)$ has coefficients which are independent differential transcendentals over $\QQQ$. This differential tangent space satisfies the hypotheses of Theorem \ref{thm:defnbijection}, so the solution set of  $T^\D_{(\bar{a},\bar{\a})}\left(V^{[m]}\right)$ has no infinite rank proper $\CC$-vector subspaces definable over $\QQQ\langle \bar{a},\bar{\a}\rangle$. By Theorem \ref{bdlinnonlin}, we conclude that the differential variety defined by $f(x)=0$ is strongly minimal.
\end{proof}

\bigskip

Finally, we will present an example of a system of equations which does not have transcendental coefficients where the substitutions of algorithms A and B are not well-defined.
\begin{example}
Consider a generic fiber of the differential equation satisfied by the $j$-function
\[
E_\a:= S(x)+R(x)(x')^2=\a
\]
where $\a$ is a differential transcendental, $S(x)$ is the Schwarzian derivative:
\[
S(x)= \left( \frac{x''}{x'} \right)'-\frac{1}{2}\left( \frac{x''}{x'} \right)^2
\]
and
\[
R(x)=\frac{x^2-1978x+2654208}{2x^2(x-1728)^2}.
\]

Following the procedure laid our in Section \ref{Lintrans}, let $(a_1,a_2)$ be a Morley sequence\footnote{If we were strictly applying the arguments from Section \ref{Lintrans}, we would need a sequence of longer length. However, only two will be necessary to demonstrate this example.} for $E_\a$.
We replace the transcendental $\a$ with a variable $y$, compute the differential tangent space, and eliminate $y$, resulting in the equation:
\[
A_{3}u^{(3)}+A_{2}u''+A_{1}u'+A_{0}u = B_{3}v^{(3)}+B_{2}v''+B_{1}v'+B_{0}v
\]
where
\begin{align*}
    A_{3} &:= \frac{1}{a_1'},
    & B_{3} &:= \frac{1}{a_2'},\\
    A_{2} &:= -\frac{3 a_1''}{(a_1')^2},
    & B_{2} &:= -\frac{3 a_2''}{(a_2')^2},\\
    A_{1} &:= -\frac{a_1^{(3)}}{(a_1')^2}+\frac{3(a_1'')^2}{(a_1')^3} +2a_1'R(a_1),
    & B_{1} &:= -\frac{a_2^{(3)}}{(a_2')^2}+\frac{3(a_2'')^2}{(a_2')^3} +2a_2'R(a_2),\\
    A_{0} &:=(a_1')^2 R'(a_1),
    & B_{0} &:=(a_2')^2 R'(a_2).
\end{align*}

Unlike in Section \ref{genstrongmin}, these coefficients are not independent differential transcendentals. In fact, $A_{2}=3A_{3}'$ and $B_{2}=3B_{3}'$. 
We now apply the sequence of substitutions found in Algorithm A.
First $(u,v)\mapsto (\tilde{u},v)$ where $\tilde{u}=u-\frac{B_{3}}{A_{3}}v$ resulting in the equation
\[
A_{3}\tilde{u}^{(3)}+A_{2}\tilde{u}''+A_{1}\tilde{u}'+A_{0}\tilde{u} = \tilde{B}_2v''+\tilde{B}_1v'+\tilde{B}_0v
\]
where
\begin{align*}
    \tilde{B}_2 &:= B_{2} - 3 A_{3}\left(\frac{B_{3}}{A_{3}}\right)'-A_{2}\left(\frac{B_{3}}{A_{3}}\right),\\
    \tilde{B}_1 &:= B_{1} - 3 A_{3}\left(\frac{B_{3}}{A_{3}}\right)''-3A_{2}\left(\frac{B_{3}}{A_{3}}\right)'-A_{1}\left(\frac{B_{3}}{A_{3}}\right),\\
    \tilde{B}_0 &:= B_{0} -  A_{3}\left(\frac{B_{3}}{A_{3}}\right)^{(3)}-3A_{2}\left(\frac{B_{3}}{A_{3}}\right)''-3A_{1}\left(\frac{B_{3}}{A_{3}}\right)'-A_{0}\left(\frac{B_{3}}{A_{3}}\right).
\end{align*}

The next substitution would replace $v$ with $\tilde{v}$ where $v=\tilde{v}+\left( \frac{A_3}{\tilde{B}_2} \right)\tilde{u}'$ which would reduce the order of the $\tilde{u}$. However, this substitution is not well-defined because $\tilde{B}_2=0$:
\begin{align*}
    \tilde{B}_2 &= 3B_{3}' - 3A_{3}\left(\frac{B_{3}}{A_{3}}\right)'-3A_{3}'\left(\frac{B_{3}}{A_{3}}\right) \\
    &=3B_{3}' - 3A_{3}\left(\frac{B_{3}'A_{3}-A_{3}'B_{3}}{A_{3}^2}\right)-\frac{3A_{3}'B_{3}}{A_{3}} \\
    &= 3B_{3}'-3B_{3}'+ \frac{3A_{3}'B_{3}}{A_{3}} - \frac{3A_{3}'B_{3}}{A_{3}} \\
    &= 0.
\end{align*}
While this specific substitution does not yield a definable bijection, one can use another substitution $\left(\tilde{v}=v-\frac{A_{3}}{\tilde{B}_{1}}\tilde{u}''\right)$, and in this case, the fact that $\tilde{B}_2=0$ results in fewer substitutions necessary to reach the desired result.
\end{example}

\section{Orthogonality to the constants} \label{genstrongmin1}
We've seen that bounds of \cite{freitagmoosa} on the degree of nonminimality can be used in conjunction with linearization techniques to establish the strong minimality of general classes of differential equations. There are two main obstacles to the wide application of these techniques for establishing strong minimality of many classical nonlinear equations. 
\begin{enumerate} 
\item The methods developed in the previous sections seem to require at least one coefficient of the equation in question to be differentially transcendental. 
\item For a given equation, even of small order, the computations required to verify strong minimality are quite involved. 
\end{enumerate} 

In this section, we will show how the computational demands can be significantly reduced if a weaker condition than strong minimality is the goal: \emph{$V$ is either strongly minimal or almost internal to the constants}. In \cite{freitagmoosa}, it is shown that if $\nmdeg(p) > 1,$ then $p$ is an isolated type which is almost internal to a non-locally modular type. In differentially closed fields, this means that $p$ is almost internal to the constant field whenever $\nmdeg(p) > 1.$ Thus, verifying the weaker condition only requires an analysis of forking extensions over one new solution - the computations need only involve two unknown variables in this case. 


\begin{example}
We will show the equation $$x''+x^2-\a =0,$$ where $\a$ is a differential transcendental, is either strongly minimal or internal to the constants. If the equation is not internal to the constants and not strongly minimal, then by the results of \cite{freitagmoosa} there is an indiscernible sequence of length two $(x_1,x_2)$, which would satisfy the system 
$$
\left\{
\begin{array}{lcl}
x_1''+x_1^2 & = & \a \\
x_2''+x_2^2 & = & \a
\end{array}
\right.
$$
such that $x_2$ satisfies an order one equation over $x_1.$ Using the same strategy as in the previous sections, we replace $\a$ with a variable $y$ in both equations, and then compute the differential tangent space:
$$
\left\{
\begin{array}{lcl}
u''+2 x_1 u & = & y \\
v''+2 x_2 v & = & y.
\end{array}
\right.
$$
Eliminating $y$, we are left with the single equation
$$u''+2 x_1 u =v''+2 x_2 v.$$
Consider the definable bijection given by the substitution $(u,v)\mapsto (w,v)$ where $u=w+v$.
This transforms the above equation into 
$$w''+2x_1 w = 2(x_2-x_1) v$$
which can be solved for $v$ since $x_1\neq x_2$.
Therefore the differential tangent space has no infinite rank subvarieties, a contradiction, so $x''+x^2-\a =0$ is either strongly minimal or almost internal to the constants.
\end{example}

\begin{ques} \label{q1}
Varieties which are internal to the constants have certain stronger properties that may, in general, allow one show via some additional argument the strong minimality of specific equations. For instance, by \cite{freitag2017finiteness}, if $X$ is nonorthogonal to the constants, then there are infinitely many co-order one subvarieties of $X$. Thus, in this setting, showing strong minimality (after an argument like that of the example above) is equivalent to ruling out co-order one subvarieties. Are there interesting classes of equations in which one can successfully employ this strategy? 
\end{ques} 


After the completion of this work, the authors, together with Guy Casale and Joel Nagloo, were able to give an affirmative answer to Question \ref{q1} in the case of the equation satisfied by the $j$-function.
This gives a new proof of the main theorem of \cite{freitag2017strong}: the differential equation satisfied by the $j$-function, 
$$\left(\frac{y''}{y'}\right)' -\frac{1}{2}\left(\frac{y''}{y'}\right)^2 + (y')^2 \cdot \frac{y^2-1968y+2654208}{y^2(y-1728)^2} = 0$$
is strongly minimal. We also employ the strategy to establish the strong minimality of several new equations. Nevertheless, we have left Question \ref{q1} as stated above, since we feel pursuing this approach is an important direction for future research.

During the revisions of this manuscript, the bounds on the degree of nonminimality in differentially closed fields were improved dramatically \cite{freitag2022degree} - the degree of nonminimality in differentially closed fields is at most two. This does not affect the statement of our main result, Theorem \ref{thm:main}, but does widen the scope of the result as pointed out in \cite{freitag2022degree} - generic differential equations of degree at least six are strongly minimal.

\bibliography{research}{}

\begin{thebibliography}{10}

\bibitem{aslanyan2020ax}
Vahagn Aslanyan.
\newblock Ax-{S}chanuel and strong minimality for the j-function.
\newblock {\em Annals of Pure and Applied Logic}, page 102871, 2020.

\bibitem{blazquez2020some}
David Bl{\'a}zquez-Sanz, Guy Casale, James Freitag, and Joel Nagloo.
\newblock Some functional transcendence results around the {S}chwarzian
  differential equation.
\newblock In {\em Annales de la Facult{\'e} des sciences de Toulouse:
  Math{\'e}matiques}, volume~29, pages 1265--1300, 2020.

\bibitem{brestovski1989algebraic}
Michel Brestovski.
\newblock Algebraic independence of solutions of differential equations of the
  second order.
\newblock {\em Pacific Journal of Mathematics}, 140(1):1--19, 1989.

\bibitem{casale2020ax}
Guy Casale, James Freitag, and Joel Nagloo.
\newblock Ax-{L}indemann-{W}eierstrass with derivatives and the genus 0
  {F}uchsian groups.
\newblock {\em Annals of Mathematics}, 192(3):721--765, 2020.

\bibitem{ChHr-AD1}
Zo{\'e} Chatzidakis and Ehud Hrushovski.
\newblock Difference fields and descent in algebraic dynamics, {I}.
\newblock {\em Journal of the Institute of Mathematics of Jussieu}, 7:653--686,
  2008.

\bibitem{freitag2012model}
James Freitag.
\newblock {\em Model theory and differential algebraic geometry}.
\newblock PhD thesis, University of Illinois, 2012.

\bibitem{freitag2022equations}
James Freitag, R{\'e}mi Jaoui, David Marker, and Joel Nagloo.
\newblock On the equations of {P}oizat and {L}i\'enard.
\newblock {\em arXiv preprint arXiv:2201.03838, to appear in International
  Mathematics Research Notices}, 2022.

\bibitem{freitag2022degree}
James Freitag, R{\'e}mi Jaoui, and Rahim Moosa.
\newblock The degree of nonminimality is at most two.
\newblock {\em arXiv preprint arXiv:2206.13450, to appear, Journal of
  Mathematical Logic}, 2022.

\bibitem{freitag2022any}
James Freitag, R{\'e}mi Jaoui, and Rahim Moosa.
\newblock When any three solutions are independent.
\newblock {\em Inventiones mathematicae}, 230(3):1249--1265, 2022.

\bibitem{freitag2017finiteness}
James Freitag and Rahim Moosa.
\newblock Finiteness theorems on hypersurfaces in partial
  differential-algebraic geometry.
\newblock {\em Advances in Mathematics}, 314:726--755, 2017.

\bibitem{freitagmoosa}
James Freitag and Rahim Moosa.
\newblock Bounding nonminimality and a conjecture of {B}orovik-{C}herlin.
\newblock {\em arXiv preprint arXiv:2106.02537, to appear, Journal of the
  European Mathematical Society}, 2021.

\bibitem{freitag2017strong}
James Freitag and Thomas Scanlon.
\newblock Strong minimality and the $ j $-function.
\newblock {\em Journal of the European Mathematical Society}, 20(1):119--136,
  2017.

\bibitem{HrushovskiMordell-Lang}
Ehud Hrushovski.
\newblock The {M}ordell-{L}ang conjecture for function fields.
\newblock {\em Journal of the American Mathematical Society}, 9:no. 3,
  452--464, 1996.

\bibitem{HrSo}
Ehud Hrushovski and \v{Z}eljko Sokolovi\'{c}.
\newblock Strongly minimal sets in differentially closed fields.
\newblock {\em unpublished manuscript}, 1994.

\bibitem{jaoui2019generic}
R{\'e}mi Jaoui.
\newblock Generic planar algebraic vector fields are disintegrated.
\newblock {\em arXiv preprint arXiv:1905.09429, to appear in Algebra and Number
  Theory}, 2022.

\bibitem{KolchinDAAG}
Ellis~R. Kolchin.
\newblock {\em Differential Algebra and Algebraic Groups}.
\newblock Academic Press, New York, 1976.

\bibitem{KolchinDAG}
Ellis~R. Kolchin.
\newblock {\em Differential Algebraic Groups}.
\newblock Academic Press, New York, 1984.

\bibitem{MMP}
David Marker, Margit Messmer, and Anand Pillay.
\newblock {\em Model theory of fields}.
\newblock A. K. Peters/CRC Press, 2005.

\bibitem{mcgrail2000model}
Tracey McGrail.
\newblock The model theory of differential fields with finitely many commuting
  derivations.
\newblock {\em The Journal of Symbolic Logic}, 65(2):885--913, 2000.

\bibitem{murata1995classical}
Yoshihiro Murata.
\newblock Classical solutions of the third {P}ainlev{\'e} equation.
\newblock {\em Nagoya Mathematical Journal}, 139:37--65, 1995.

\bibitem{nagloo2019algebraic}
Joel Nagloo.
\newblock Algebraic independence of generic {P}ainlev{\'e} transcendents:
  {$P_{III}$} and {$P_{VI}$}.
\newblock {\em Bulletin of the London Mathematical Society}, 2019.

\bibitem{nagloo2017algebraic}
Joel Nagloo and Anand Pillay.
\newblock On algebraic relations between solutions of a generic {P}ainlev{\'e}
  equation.
\newblock {\em Journal f{\"u}r die reine und angewandte Mathematik (Crelles
  Journal)}, 2017(726):1--27, 2017.

\bibitem{Nishioka}
Keiji Nishioka.
\newblock A conjecture of {M}ahler on automorphic functions.
\newblock {\em Archiv der Mathematik}, 53(1):46--51, 1989.

\bibitem{ohyama2006studies}
Yousuke Ohyama, Hiroyuki Kawamuko, Hidetaka Sakai, and Kazuo Okamoto.
\newblock Studies on the {P}ainlev{\'e} equations, {V} third {P}ainlev{\'e}
  equations of special type {$P_{III}$ $(D7)$ and $P_{III}$ $(D8)$}.
\newblock {\em J. Math. Sci. Univ. Tokyo}, 13(2):145--204, 2006.

\bibitem{okamoto1986studies}
Kazuo Okamoto.
\newblock Studies on the {P}ainlev{\'e} equations. {III}: Second and fourth
  {P}ainlev{\'e} equations, {$P_{II}$ and $P_{IV}$}.
\newblock {\em Mathematische Annalen}, 275(2):221--255, 1986.

\bibitem{okamoto1987studies5}
Kazuo Okamoto.
\newblock Studies on the {P}ainlev{\'e} equations {II}. {F}ifth {P}ainlev{\'e}
  equation {$P_V$}.
\newblock {\em Japanese journal of mathematics. New series}, 13(1):47--76,
  1987.

\bibitem{okamoto1987studies}
Kazuo Okamoto.
\newblock Studies on the {P}ainlev{\'e} equations {IV}, third {P}ainlev{\'e}
  equation {$P_{III}$}.
\newblock {\em Funkcial. Ekvac}, 30(2-3):305--332, 1987.

\bibitem{painleve1leccons}
P~Painlev{\'e}.
\newblock Le{\c{c}}ons sur la th{\'e}orie analytique des {\'e}quations
  diff{\'e}rentielles (le{\c{c}}ons de stockholm, 1895)(hermann, paris, 1897).
\newblock {\em Reprinted, Oeuvres de Paul Painlev{\'e}}, 1.

\bibitem{PilaAO}
Jonathan Pila.
\newblock O-minimality and the {A}ndr{\'e}-{O}ort conjecture for {$\mathbf
  C^n$}.
\newblock {\em Ann. of Math.(2)}, 173(3):1779--1840, 2011.

\bibitem{PilaDer}
Jonathan Pila.
\newblock Modular {A}x-{L}indemann-{W}eierstrass with derivatives.
\newblock {\em Notre Dame J. Formal Logic}, 54 (Oleron Volume):553--565, 2013.

\bibitem{pila2016ax}
Jonathan Pila and Jacob Tsimerman.
\newblock Ax--{S}chanuel for the $ j $-function.
\newblock {\em Duke Mathematical Journal}, 165(13):2587--2605, 2016.

\bibitem{GST}
Anand Pillay.
\newblock {\em Geometric Stability Theory}.
\newblock Oxford University Press, 1996.

\bibitem{PillayPong}
Anand Pillay and Wai~Yan Pong.
\newblock On {L}ascar rank and {M}orley rank of definable groups in
  differentially closed fields.
\newblock {\em J. Symbolic Logic}, 67(3):1189--1196, 2002.

\bibitem{PillayZiegler}
Anand Pillay and Martin Ziegler.
\newblock Jet spaces of varieties over differential and difference fields.
\newblock {\em Selecta Math. (N. S.)}, 9 (4):579--599, 2003.

\bibitem{poizat1980c}
Bruno Poizat.
\newblock C'est beau et chaud.
\newblock {\em Groupe d'{\'e}tude de th{\'e}ories stables}, 3:1--11, 1980.

\bibitem{Poizat}
Bruno Poizat.
\newblock {\em Stable Groups}.
\newblock Mathematical Surveys and monographs, volume 87, American Mathematical
  Society, 1987.

\bibitem{pong2003rank}
Wai~Yan Pong.
\newblock Rank inequalities in the theory of differentially closed fields.
\newblock In {\em Logic Colloquium}, volume~3, pages 232--243, 2003.

\bibitem{umemura1997solutions}
Hiroshi Umemura and Humihiko Watanabe.
\newblock Solutions of the second and fourth {P}ainlev{\'e} equations. {I}.
\newblock {\em Nagoya Mathematical Journal}, 148:151--198, 1997.

\bibitem{umemura1998solutions}
Hiroshi Umemura and Humihiko Watanabe.
\newblock Solutions of the third {P}ainlev\'e equation. {I}.
\newblock {\em Nagoya Mathematical Journal}, 151:1--24, 1998.

\bibitem{watanabe1995solutions}
Humihiko Watanabe.
\newblock Solutions of the fifth {P}ainlev\'e equation {I}.
\newblock {\em Hokkaido Mathematical Journal}, 24(2):231--267, 1995.

\bibitem{watanabe1998birational}
Humihiko Watanabe.
\newblock Birational canonical transformations and classical solutions of the
  sixth {P}ainlev\'e equation.
\newblock {\em Annali della Scuola Normale Superiore di Pisa-Classe di
  Scienze}, 27(3-4):379--425, 1998.

\bibitem{wolf2019model}
Jonathan Wolf.
\newblock {\em Model Theory of Differential Fields and Ranks of Underdetermined
  Systems of Differential Equations}.
\newblock PhD thesis, University of Illinois at Chicago, 2019.

\end{thebibliography}
\bibliographystyle{plain}

\end{document}